\newcommand{\ie}{\emph{i.e.}}
\newcommand{\eg}{\emph{e.g.}}
\newcommand{\etc}{\emph{etc}}
\newcommand{\cf}{\emph{cf.}}
\newcommand{\etal}{\emph{et al.}}
\newcommand{\Hyper}{\mathbb{H}}
\newcommand{\Real}{\mathbb{R}}
\newcommand{\Com}{\mathbb{C}}
\newcommand{\Nat}{\mathbb{N}}
\newcommand{\Sphere}{\mathbb{S}}
\newcommand{\sgn}{\mathop{\mathrm{sgn}}\nolimits}
\newcommand{\Dom}{\mathsf{D}}
\newcommand{\Ran}{\mathsf{R}}
\newcommand{\Ker}{\mathsf{N}}
\newcommand{\eps}{\varepsilon}
\newcommand{\sii}{L^2}
\newcommand{\der}{\mathrm{d}}
\newcommand{\Hilbert}{\mathcal{H}}
\newtheorem{Theorem}{Theorem}
\newtheorem{Lemma}{Lemma}
\newtheorem{Proposition}{Proposition}
\newtheorem{Corollary}{Corollary}
\newtheorem{Conjecture}{Conjecture}
\theoremstyle{definition}
\newtheorem{Remark}{Remark}
\newtheorem{Assumption}{Assumption}
\numberwithin{equation}{section}
\def\OMIT#1{}
\definecolor{DarkGreen}{rgb}{0,0.5,0.1} 
\newcommand\soutD{\bgroup\markoverwith
{\textcolor{DarkGreen}{\rule[.5ex]{2pt}{1pt}}}\ULon}
\newcommand\soutM{\bgroup\markoverwith
{\textcolor{blue}{\rule[.5ex]{2pt}{1pt}}}\ULon}
\newcommand{\Hm}[1]{\leavevmode{\marginpar{\tiny%
$\hbox to 0mm{\hspace*{-0.5mm}$\leftarrow$\hss}%
\vcenter{\vrule depth 0.1mm height 0.1mm width \the\marginparwidth}%
\hbox to
0mm{\hss$\rightarrow$\hspace*{-0.5mm}}$\\\relax\raggedright #1}}}
\begin{document}
%
\title{The abstract Birman--Schwinger principle and spectral stability}  
\author{Marcel Hansmann\,$^a$ \ and \ David Krej\v{c}i\v{r}{\'\i}k\,$^b$}	
\date{\small 
\begin{quote}
\emph{
\begin{itemize}
\item[$a)$] 
Fakult\"at f\"ur Mathematik, Technische Universit\"at Chemnitz,
Reichenhainer Strasse 41, 09107 Chemnitz, Germany;
marcel.hansmann@mathematik.tu-chemnitz.de.%
\item[$b)$] 
Department of Mathematics, Faculty of Nuclear Sciences and 
Physical Engineering, Czech Technical University in Prague, 
Trojanova 13, 12000 Prague 2, Czechia;
david.krejcirik@fjfi.cvut.cz.%
\end{itemize}
}
\end{quote}
28 October 2020}
\maketitle
\begin{abstract} 
\noindent
We discuss abstract Birman--Schwinger principles to study spectra of
self-adjoint operators subject to small non-self-adjoint 
perturbations in a factorised form. In particular, we extend and in part improve a classical result by Kato which ensures spectral stability.
As an application, we revisit known results for 
Schr\"odinger and Dirac operators in Euclidean spaces
and establish new results for Schr\"odinger operators 
in three-dimensional hyperbolic space.
%
%
\end{abstract}
%

\section{Introduction}
%

%
\subsection{Motivations}
The present paper has three purposes. The first is to develop an abstract version of 
the so-called Birman--Schwinger principle,
which is a well known tool from the theory of Schr\"odinger operators. 
It is customarily used to transfer a differential equation to an integral equation
and has been employed in many circumstances over the last half century
since the pioneering works of 
Birman~\cite{Birman_1961} and Schwinger~\cite{Schwinger_1967}. 
In recent years, the method has been revived  
in the context of spectral theory
of non-self-adjoint Schr\"odinger and Dirac operators with complex potentials
as a replacement of unavailable variational techniques
(see, \eg, \cite{Frank_2011,
Cuenin-Laptev-Tretter_2014,Enblom_2016,Frank-Simon_2017,
Cuenin_2017,FKV,Enblom_2018,Cuenin-Siegl_2018,
FK9,IKL,Ibrogimov-Stampach_2019,CIKS}
to quote just a couple of most recent works).
While its usefulness is very robust, 
the method is usually applied to concrete problems \emph{ad hoc}
and not always rigorously. 
Here we suggest an abstract machinery directly applicable to concrete problems. Abstract versions of the Birman--Schwinger principle have been discussed before (see Remark \ref{Remark.literature} below), but this was usually restricted to (discrete) eigenvalues. In contrast, we also cover eigenvalues embedded in the essential spectrum as well as residual, continuous and essential spectra.

Our second goal is to use our abstract machinery to prove spectral stability given uniform bounds on the Birman--Schwinger operator. In particular, we will be able to derive such results without any smoothness assumptions (in the sense of Kato \cite{Kato_1966}) and will thus be able to extend and improve upon Kato's classical result (Theorem \ref{Thm.main.similar} below) on this topic.

Our third and final goal is to show the applicability of the abstract Birman--Schwinger principles. This will be illustrated via some known spectral enclosures
for Schr\"odinger and Dirac operators in Euclidean spaces, which we recover, and via a completely new result, namely the spectral stability for Schr\"odinger operators in three-dimensional hyperbolic space.

\subsection{Assumptions and notations} 
Throughout this paper $\Hilbert$ and $\Hilbert'$ denote complex separable Hilbert spaces and $\mathscr{B} (\Hilbert, \Hilbert')$ denotes the space of bounded linear operators from $\Hilbert$ to $\Hilbert'$. As usual, we set 
$\mathscr{B}(\Hilbert) := \mathscr{B}(\Hilbert, \Hilbert)$, \etc. 
We denote the inner product (which is linear in the second component) and norm in $\Hilbert$ as well as in $\Hilbert'$ by the same symbols, 
namely $(\cdot,\cdot)$ and $\|\cdot\|$, respectively. The latter is also used to denote the operator norms in $\mathscr{B}(\Hilbert,\Hilbert'), \mathscr{B}(\Hilbert)$ and so on. The particular meaning of each symbol should always be clear from the context. 
We denote the domain, kernel,
range and adjoint of an operator $A$ from $\Hilbert \to \Hilbert'$ by 
$\Dom(A)$, $\Ker(A)$, $\Ran(A)$ and $A^*$, respectively. Recall that the \emph{spectrum} $\sigma(H)$ of any closed operator~$H$ in~$\Hilbert$  
is the set of those complex numbers~$\lambda$ for which
$H-\lambda:\Dom(H)\to\Hilbert$ is not bijective.
The \emph{resolvent set} is the complement $\rho(H):=\Com\setminus\sigma(H)$.
The \emph{point spectrum} $\sigma_{\mathrm{p}}(H)$ of~$H$ 
is the set of eigenvalues of~$H$
(\ie~the operator $H-\lambda$ is not injective).
For the surjectivity, one says that 
$\lambda \in \sigma(H)$ belongs to the \emph{continuous spectrum}
$\sigma_{\mathrm{c}}(H)$
(respectively, \emph{residual spectrum} $\sigma_{\mathrm{r}}(H)$) of~$H$ 
if $\lambda\not\in\sigma_\mathrm{p}(H_V)$
and the closure of the range of $H-\lambda$ equals~$\Hilbert$  
(respectively, the closure is a proper subset of~$\Hilbert$).
Finally, we say that $\lambda \in \Com$ belongs 
to the \emph{essential spectrum} $\sigma_{e}(H)$ of~$H$ 
if~$\lambda$ is an eigenvalue of infinite geometric multiplicity
or the range of $H-\lambda$ is not closed.

\medskip
Our standing hypotheses are as follows.
\begin{Assumption}\label{Ass.Ass}
$H_0$ is a self-adjoint operator 
in $\Hilbert$ and  $|H_0| := (H_0^2)^{1/2}$. Moreover, $A : \Dom(A) \subset \Hilbert \to \Hilbert'$ and $B : \Dom(B) \subset \Hilbert \to \Hilbert'$ are linear operators such that $\Dom(|H_0|^{1/2}) \subset \Dom(A) \cap \Dom(B)$. We assume that for some (hence all) $b>0$
\begin{equation}
  \label{Ass.bounded}
  A(|H_0|+b)^{-1/2} \in \mathscr{B}(\Hilbert, \Hilbert'), \quad  B(|H_0|+b)^{-1/2} \in \mathscr{B}(\Hilbert, \Hilbert').
\end{equation}
Next, we set $G_0:=(|H_0|+1)$  and introduce the \emph{Birman--Schwinger operator}
\begin{equation}
  \label{Birman_Schwinger1} 
  K(\lambda):=  \big[AG_0^{-1/2}\big]\big[G_0(H_0-\lambda)^{-1}\big]
  \big[BG_0^{-1/2}\big]^* \in \mathscr{B}(\Hilbert'), \qquad \lambda \in \rho(H_0).
\end{equation}
Our final assumption is that there exists $\lambda_0 \in \rho(H_0)$ such that 
\begin{equation}
  \label{Ass.spectrum}
  -1 \notin \sigma(K(\lambda_0)).
\end{equation}
\end{Assumption}
\begin{Remark}\label{rem1}
  We note that $K(\lambda)$ is a bounded extension of 
  the (maybe more familiar) Birman--Schwinger operator $A(H_0-\lambda)^{-1}B^*$, defined on $\Dom(B^*)$. In particular, if $\Dom(B^*)$ is dense in $\Hilbert'$, then $K(\lambda)= \overline{ A(H_0-\lambda)^{-1}B^*}$. For instance, the latter is true if $B$ is closable. Moreover, setting $G_\delta:= (|H_0|+1+\delta)$ for $\delta > -1$, we note that we also have that  
  \begin{equation}\label{Birman_Schwinger2}
 K(\lambda) = \big[AG_\delta^{-1/2}\big]\big[G_\delta(H_0-\lambda)^{-1}\big]
 \big[BG_\delta^{-1/2}\big]^*,
\end{equation}
as follows from the fact that functions of $H_0$ commute (taking the respective domains into account) and
\[ 
\big[BG_0^{-1/2}\big]^*
= \big[(BG_\delta^{-1/2})(G_\delta^{1/2} G_0^{-1/2})\big]^* 
= \big[G_\delta^{1/2} G_0^{-1/2}\big]^* \big[BG_\delta^{-1/2}\big]^* 
= \big[G_\delta^{1/2} G_0^{-1/2}\big] \big[BG_\delta^{-1/2}\big]^*.
\] 
\end{Remark}

\begin{Remark}\label{Remark.literature}
There exist a variety of approaches to the Birman--Schwinger principle for factorable perturbations of a given (self-adjoint) operator $H_0$, \ie\ for a suitable closed extension $H_V$ of $H_0+B^*A$. Let us mention Kato's pioneering work \cite{Kato_1966} and the work by Konno and Koroda \cite{konno_finiteness_1966}. As some of the more recent articles on the topic we mention works by Gesztesy \etal\ \cite{gesztesy_nonselfadjoint_2005}, Latushkin and Sukhtayev \cite{latushkin_algebraic_2010}, Frank \cite{frank_eigenvalue_2018} 
and of Behrndt, ter Elst and Gesztesy \cite{behrndt_generalized_2020}. The assumptions on $A, B$ and $H_0$ made in these works are not uniform but vary from paper to paper. Our own assumptions take an intermediate position. For instance, we do not assume that $A$ or $B$ are closed, which is important for some applications. An example is provided, \eg, in the remark in Appendix B of \cite{frank_eigenvalue_2018}, which also shows that it can be advantageous to allow for the case $\Hilbert' \neq \Hilbert$. 

On the other hand, we do assume that $H_0$ is self-adjoint (but not necessarily bounded below), which some of the mentioned papers don't, and we do assume that $\Dom(|H_0|^{1/2}) \subset \Dom(A) \cap \Dom(B)$. The latter assumption allows for a quite explicit description of $H_V$ via quadratic forms (see Section \ref{Sec.pseudo}). In contrast to this, the weaker assumption that $\Dom(H_0) \subset \Dom(A) \cap \Dom(B)$ made in some of the mentioned papers would allow to define $H_V$ only implicitly via an associated resolvent equation (see (\ref{2nd})). 
\end{Remark} 

While Assumption (\ref{Ass.bounded}) is usually easy to verify in
concrete applications (for instance, it is certainly true if $A$ and
$B$ are closed as follows from the closed graph theorem), the direct
verification of (\ref{Ass.spectrum}) might not be that easy. For this
reason, the next lemma discusses some sufficient conditions for
(\ref{Ass.spectrum}) which might be easier to verify. 
\begin{Lemma}\label{Lemma1}
  Assume \eqref{Ass.bounded}. Then assumption
  \eqref{Ass.spectrum} is satisfied if one of the following three conditions holds:
  \begin{enumerate}
  \item[\emph{(i)}] 
  there exists $\lambda_0 \in \rho(H_0)$ such that $\|K(\lambda_0)\| < 1$,
  \item[\emph{(ii)}] 
  there exists $a \in (0,1)$ and $\delta> 0$ such that
  \begin{equation}
    \label{eq:3}
    \big\| \big[B(|H_0|+\delta)^{-1/2}\big]^*
    \big[A(|H_0|+\delta)^{-1/2}\big]\big\| \leq a,
  \end{equation}
  \item[\emph{(iii)}] 
  there exists $a \in (0,1)$ and $\delta> 0$ such that
    \begin{equation}
|(B\phi, A \psi)| \leq a \,
\big\| (|H_0|+\delta)^{1/2}\phi\big\| 
\big\| (|H_0|+\delta)^{1/2}\psi\big\|, 
\qquad \phi,\psi \in \Dom(|H_0|^{1/2}).\label{eq:1}
\end{equation}
  \end{enumerate}  
Moreover, the assumptions \eqref{Ass.bounded} and
  \eqref{Ass.spectrum} are \emph{both} satisfied if
\begin{enumerate}
\item[\emph{(iv)}	] 
there exists $a \in (0,1)$ and $b> 0$ such that 
  \begin{equation}
  \max\big( \|A\psi\|^2, \|B\psi\|^2\big) \leq a \, 
  \big\||H_0|^{1/2}\psi\big\|^2 + b \, \|\psi\|^2, \qquad \psi \in \Dom(|H_0|^{1/2}).\label{eq:2}
\end{equation}
In addition, if $A$ and $B$ are closed then it is sufficient 
that \eqref{eq:2} holds for $\psi \in \mathcal D$ where $\mathcal D$ is a core of $\Dom(|H_0|^{1/2})$.
  \end{enumerate}  
\end{Lemma}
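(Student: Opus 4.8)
The plan is to treat the four conditions in order of increasing strength, reducing (ii)--(iv) to (i). For (i): if $\|K(\lambda_0)\| < 1$ then the Neumann series shows $1 + K(\lambda_0)$ is boundedly invertible, so $-1 \notin \sigma(K(\lambda_0))$; this is immediate. The bulk of the work is to show that conditions (ii), (iii) and (iv) each imply a bound of the form $\|K(\lambda_0)\| < 1$ for a suitable choice of $\lambda_0 \in \rho(H_0)$, typically $\lambda_0 = \pm i t$ with $t$ large (or $t$ adjusted to the parameter $\delta$). First I would record the factorisation from \eqref{Birman_Schwinger2} with the shifted weight $G_\delta = |H_0| + 1 + \delta$ and observe that $K(\lambda)$ splits as a product of the bounded pieces $A G_\delta^{-1/2}$, the ``propagator'' $G_\delta (H_0-\lambda)^{-1}$, and $[B G_\delta^{-1/2}]^*$. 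The key spectral-calculus estimate is that for $\lambda$ with $\mathrm{Im}\,\lambda = t \neq 0$,
\[
\big\| G_\delta (H_0-\lambda)^{-1} \big\|
= \sup_{s \in \sigma(H_0)} \frac{|s| + 1 + \delta}{|s - \lambda|}
\le \sup_{s \in \Real} \frac{|s| + 1 + \delta}{\sqrt{(s - \mathrm{Re}\,\lambda)^2 + t^2}},
\]
which for $\mathrm{Re}\,\lambda = 0$ behaves like $1 + (1+\delta)/|t| \to 1$ as $|t|\to\infty$. This is the ``$t\to\infty$'' mechanism that forces $\|K(\lambda_0)\| < 1$ once the remaining factors are controlled by the hypothesis; I would make this quantitative enough to absorb the constant $a < 1$.

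For (ii): since $\|K(\lambda_0)\| \le \|A G_\delta^{-1/2}\| \cdot \|G_\delta (H_0-\lambda_0)^{-1}\| \cdot \|[B G_\delta^{-1/2}]^*\|$ is not quite what we want (it would give a bound in terms of the product of \emph{norms}, not the norm of the product appearing in \eqref{eq:3}), I would instead regroup: writing $K(\lambda_0) = [A G_\delta^{-1/2}][G_\delta^{1/2}(H_0-\lambda_0)^{-1}G_\delta^{1/2}][G_\delta^{-1/2}][B G_\delta^{-1/2}]^* \cdot(\dots)$ is awkward, so the cleaner route is to note that the hypothesis \eqref{eq:3} bounds the operator $T := [B(|H_0|+\delta)^{-1/2}]^*[A(|H_0|+\delta)^{-1/2}]$, and that $K(\lambda_0)$ can be written, via the spectral theorem for $H_0$, as $f(H_0)\, T\, g(H_0)$-type expression with $f,g$ bounded multipliers whose sup-norms tend to $1$; more precisely one interposes the resolvent between the two half-powers and estimates $\|(|H_0|+\delta)^{1/2}(H_0-\lambda_0)^{-1}(|H_0|+\delta)^{1/2}\|$, which again is $\sup_s (|s|+\delta)/|s-\lambda_0|$ and tends to $1$ as $\mathrm{Im}\,\lambda_0\to\infty$. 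Hence for $\lambda_0$ with large imaginary part, $\|K(\lambda_0)\| \le a \cdot (1+o(1)) < 1$, and (i) applies. Condition (iii) is just the quadratic-form restatement of (ii): polarisation and the definition of the operator $[B(|H_0|+\delta)^{-1/2}]^*[A(|H_0|+\delta)^{-1/2}]$ on the dense set $\mathrm{Ran}((|H_0|+\delta)^{1/2}$-images$)$ show that \eqref{eq:1} holds for all $\phi,\psi \in \Dom(|H_0|^{1/2})$ if and only if that operator has norm $\le a$, so (iii) $\Leftrightarrow$ (ii).

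For (iv): the estimate \eqref{eq:2} is precisely an infinitesimal form-boundedness statement, so by the spectral theorem $\|A(|H_0|+b)^{-1/2}\psi\|^2 \le a\,\||H_0|^{1/2}(|H_0|+b)^{-1/2}\psi\|^2 + b\,\|(|H_0|+b)^{-1/2}\psi\|^2 \le (a+1)\|\psi\|^2$ (crudely), which already gives \eqref{Ass.bounded}; more carefully, $\|A(|H_0|+\delta)^{-1/2}\|^2 \le \sup_s (as+b)/(s+\delta)$, which is $< 1$ once $\delta$ is chosen large enough that $as + b < s + \delta$ for all $s \ge 0$, i.e. $\delta > b$ (using $a<1$). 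The same bound holds for $B$. Then by Cauchy--Schwarz, $\|[B(|H_0|+\delta)^{-1/2}]^*[A(|H_0|+\delta)^{-1/2}]\| \le \|B(|H_0|+\delta)^{-1/2}\|\cdot\|A(|H_0|+\delta)^{-1/2}\| < 1$, so condition (ii) holds with this $\delta$ and some $a' < 1$, and we are done. For the final sentence, if $A$ and $B$ are closed and \eqref{eq:2} holds on a core $\mathcal D$ of $\Dom(|H_0|^{1/2})$, a standard closure argument extends it to all of $\Dom(|H_0|^{1/2})$: given $\psi$ in the form domain, pick $\psi_n \in \mathcal D$ with $\psi_n \to \psi$ and $|H_0|^{1/2}\psi_n \to |H_0|^{1/2}\psi$; then $A\psi_n$ is Cauchy by \eqref{eq:2}, hence convergent, and closedness of $A$ forces the limit to be $A\psi$, so \eqref{eq:2} passes to the limit.

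The main obstacle I anticipate is the bookkeeping in (ii): one must be careful that the hypothesis \eqref{eq:3} controls the \emph{composition} $[B(|H_0|+\delta)^{-1/2}]^*[A(|H_0|+\delta)^{-1/2}]$ and not the two factors separately (they need not individually be small), so the resolvent $G_\delta(H_0-\lambda_0)^{-1}$ must be inserted in a way that keeps that composition intact --- i.e. one writes $K(\lambda_0) = [B(|H_0|+\delta)^{-1/2}]^* \,\big[(|H_0|+\delta)^{1/2}(H_0-\lambda_0)^{-1}(|H_0|+\delta)^{1/2}\big]\, [A(|H_0|+\delta)^{-1/2}]$ after commuting the self-adjoint functions of $H_0$ past each other (which is legitimate on the relevant domains, as in Remark \ref{rem1}), uses that the middle bracket is a bounded function of $H_0$ with norm $\to 1$ as $\mathrm{Im}\,\lambda_0 \to \pm\infty$, but now the outer two brackets do \emph{not} combine into \eqref{eq:3} --- so in fact the right move is the adjoint identity $\|XY\| = \|Y^*X^*\|$ together with $K(\lambda_0)^* = K(\bar\lambda_0)$ with $A,B$ swapped, letting one pass between the two orderings. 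I would settle this by choosing the grouping that matches \eqref{eq:3} directly and absorbing the resolvent factor's deviation from $1$ into the gap $1 - a > 0$.
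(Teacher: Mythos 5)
Your parts (i), (iii) and (iv) are essentially the paper's argument: (i) is the remark that the spectral radius is dominated by the norm, (iii) is the standard form-versus-norm equivalence obtained by substituting $\phi=(|H_0|+\delta)^{-1/2}f$, $\psi=(|H_0|+\delta)^{-1/2}g$, and (iv) reduces to (ii) after checking $\|A(|H_0|+\delta)^{-1/2}\|,\|B(|H_0|+\delta)^{-1/2}\|<1$ for suitable $\delta$ (the paper takes $\delta=b/a$, giving the bound $\sqrt a$), with the core statement handled by exactly the closure argument you sketch. The genuine gap is in (ii), which is the heart of the lemma, since (iii) and (iv) are routed through it. Your plan is to show that \eqref{eq:3} forces $\|K(\lambda_0)\|<1$ for $\lambda_0=i\eta$, $\eta$ large, and then invoke (i). This cannot be repaired: \eqref{eq:3} controls the composition $T:=[B(|H_0|+\delta)^{-1/2}]^*[A(|H_0|+\delta)^{-1/2}]$, whereas in $K(\lambda)=[A(|H_0|+\delta)^{-1/2}][(|H_0|+\delta)(H_0-\lambda)^{-1}][B(|H_0|+\delta)^{-1/2}]^*$ the two outer factors sit on opposite sides of the resolvent and in the opposite order; since $A$ and $B$ do not commute with functions of $H_0$, no identity of the type $K(\lambda_0)=f(H_0)\,T\,g(H_0)$ exists (it is not even type-consistent when $\Hilbert'\neq\Hilbert$), and passing to adjoints reverses all three factors, so $[B(|H_0|+\delta)^{-1/2}]^*$ and $[A(|H_0|+\delta)^{-1/2}]$ are never adjacent in $K(\lambda)$ or $K(\lambda)^*$. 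Worse, the intermediate claim itself is false: take $\Hilbert=\Hilbert'=\bigoplus_n\Com^2$, $H_0=\bigoplus_n\mathrm{diag}(0,s_n)$ with $s_n\to\infty$, and choose $A,B$ blockwise so that $A(|H_0|+\delta)^{-1/2}=e_1(e_2,\cdot)$ and $B(|H_0|+\delta)^{-1/2}=e_2(e_2,\cdot)$ on each block. Then the operator in \eqref{eq:3} vanishes, so \eqref{Ass.bounded} and \eqref{eq:3} hold with any $a$, yet $K(\lambda)$ acts on the $n$-th block as $\tfrac{s_n+\delta}{s_n-\lambda}\,e_1(e_2,\cdot)$, whence $\|K(\lambda)\|\geq\sup_n\tfrac{s_n+\delta}{|s_n-\lambda|}\geq 1$ for \emph{every} $\lambda\in\rho(H_0)$; nevertheless $-1\notin\sigma(K(\lambda))$ because each block operator is nilpotent. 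So (ii) simply does not imply a norm bound on $K(\lambda_0)$, and any proof must address the spectrum of $K(\lambda_0)$ directly.

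The missing idea, and the paper's route, is the invariance of the nonzero spectrum under cyclic permutation: for bounded $C,D$ one has $\sigma(CD)\setminus\{0\}=\sigma(DC)\setminus\{0\}$. Writing $K(\lambda)=CD$ with $C=A(|H_0|+\delta)^{-1/2}$ and $D=[(|H_0|+\delta)(H_0-\lambda)^{-1}][B(|H_0|+\delta)^{-1/2}]^*$ (legitimate by \eqref{Birman_Schwinger2}), the permuted product $DC=[(|H_0|+\delta)(H_0-\lambda)^{-1}]\,T$ does contain the composition controlled by \eqref{eq:3}, and your spectral-calculus estimate now does the work: $\|(|H_0|+\delta)(H_0-i\eta)^{-1}\|\le(1+\delta^2/\eta^2)^{1/2}\to 1$ as $\eta\to\infty$, so $\|DC\|\le a\,(1+\delta^2/\eta^2)^{1/2}<1$ for $\eta$ large, hence $-1\notin\sigma(DC)$ and therefore $-1\notin\sigma(K(i\eta))$, which is \eqref{Ass.spectrum} --- with no claim whatsoever about $\|K(i\eta)\|$. (In case (iv) your direct route does work, since there both outer factors individually have norm $\le\sqrt a$, so $\|K(i\eta)\|<1$ for large $\eta$ and (i) applies; but the general case (ii) needs the cyclicity argument.)
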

\begin{proof}
  (i) follows from the fact that the spectral radius is dominated by the operator norm. For (ii) we first note that in view of (\ref{Birman_Schwinger2}) for $ \delta > 0$ we have 
\[ 
K(\lambda)= \big[A(|H_0|+\delta)^{-1/2}\big]
\big[(|H_0|+\delta)(H_0-\lambda)^{-1}\big]
\big[B(|H_0|+\delta)^{-1/2}\big]^*, 
\qquad \lambda \in \rho(H_0).
\]  
Since for two bounded operators $C,D$ we have $\sigma(CD)\setminus \{0\} = \sigma(DC) \setminus \{0\}$, we thus obtain that $-1 \notin \sigma(K(\lambda))$ 
if, and only if,
  \[ 
  -1 \notin \sigma\Big(
  \big[(|H_0|+\delta)(H_0-\lambda)^{-1}\big]
  \big[B(|H_0|+\delta)^{-1/2}\big]^*
  \big[A(|H_0|+\delta)^{-1/2}\big]
  \Big)
  \]
and the last condition is satisfied if the norm of the operator 
on the right-hand side is smaller than one. 
But by assumption there exist $a,\delta>0$ such that  
$
  \big\|
  \big[B(|H_0|+\delta)^{-1/2}\big]^* \big[A(|H_0|+\delta)^{-1/2}\big]
  \big\| \leq a < 1
$,  hence it suffices to choose $\lambda \in \rho(H_0)$ such that  
$
  \big\|(|H_0|+\delta)(H_0-\lambda)^{-1}\big\| \leq 1/a
$. 
The latter is satisfied if $\lambda=i \eta$ with $\eta > 0$ sufficiently large, which concludes the proof of~(ii). Continuing, we note that (iii) follows from (ii) since 
\begin{align*}
& & \big\|\big[B(|H_0|+\delta)^{-1/2}\big]^*A(|H_0|+\delta)^{-1/2}\big\| \leq a &
\\
\Leftrightarrow & & \big\|\big[A(|H_0|+\delta)^{-1/2}\big]^*
B(|H_0|+\delta)^{-1/2}\big\| \leq a &
\\
\Leftrightarrow &  \qquad \forall f, g \in \Hilbert:  
& \big| ( B(|H_0|+\delta)^{-1/2} f, A(|H_0|+\delta)^{-1/2} g) \big| 
\leq a \, \|f\| \|g\| & 
\\
\Leftrightarrow   & \qquad \forall \phi, \psi \in \Dom(|H_0|^{1/2}): & | ( B \phi  , A \psi) | \leq a \, \big\|(|H_0|+\delta)^{1/2}\phi\big\| 
\big\|(|H_0|+\delta)^{1/2}\psi\big\| &.
\end{align*}
Concerning (iv) we note that given (\ref{eq:2}), for $\phi \in \Hilbert$ and $\psi = ( |H_0|+\delta)^{-1/2}\phi$, where $\delta = b/a$, we obtain that 
\begin{align*}
&    \big\| A( |H_0|+\delta)^{-1/2}\phi\big\|^2 
\leq a \, \big\||H_0|^{1/2}\psi\big\|^2 + b \|\psi\|^2 
= a \, \big\| (|H_0|+\delta)^{1/2}\psi\big\|^2
= a \, \|\phi\|^2.
  \end{align*}
Hence $A( |H_0|+\delta)^{-1/2}$ is bounded and $\big\| A( |H_0|+\delta)^{-1/2}\big\| \leq \sqrt{a}$ 
and the same is true of $B( |H_0|+\delta)^{-1/2}$ and its norm, so
\eqref{Ass.bounded} is satisfied. Moreover, the validity of
\eqref{Ass.spectrum} follows from~(ii), the submultiplicativity of the operator norm and the fact that the norm of a bounded operator and its adjoint coincide.  Finally, concerning the last statement of (iv)  
we note that in case~$A$ and~$B$ are closed, the estimate (\ref{eq:2}) will hold for all $\phi, \psi \in \Dom(|H_0|^{1/2})$ once they hold for $\phi, \psi$ 
in a core of~$|H_0|^{1/2}$. 
\end{proof}

The composition $V := B^*A$ (with its natural domain) is a well defined operator in~$\Hilbert$. However,  since~$H_0$ is not necessarily bounded from below,
the machinery of closed sectorial forms 
and the customary Friedrichs extension 
of the operator sum $H_0 + V$
are not available to us. As a replacement, below we will introduce a unique closed extension~$H_V$ of $H_0 + V$
by means of the so-called \emph{pseudo-Friedrichs extension} 
\cite[Sec.~VI.3.4]{Kato} (see Section~\ref{Sec.pseudo} for more details). First, however, let us discuss our main results about this operator.

\subsection{Our main results}
The well known version of the \emph{Birman--Schwinger principle} 
is formulated by the following equivalence.
\begin{Theorem}\label{Thm.evs0} 
Suppose Assumption~\ref{Ass.Ass}. Then
\begin{equation}\label{BS.principle}
  \forall \lambda \in \Com \setminus \sigma(H_0) 
  \,, \qquad
  \lambda \in \sigma_{\mathrm{p}}(H_V) 
  \quad \Longleftrightarrow \quad
  -1 \in \sigma_{\mathrm{p}}(K_\lambda)
  \,.
\end{equation}
\end{Theorem}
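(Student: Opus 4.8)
The idea is to realise the equivalence \eqref{BS.principle} as a bijection between the eigenspaces $\Ker(H_V-\lambda)$ and $\Ker\big(I+K(\lambda)\big)$, for each fixed $\lambda\in\Com\setminus\sigma(H_0)$, implemented by $\psi\mapsto A\psi$. The only external ingredient is the description of $H_V$ to be obtained from the pseudo-Friedrichs construction of Section~\ref{Sec.pseudo}, which I will use in the following form: for $\lambda\in\rho(H_0)$ and $\psi\in\Hilbert$ one has $\psi\in\Dom(H_V)$ with $H_V\psi=\lambda\psi$ if and only if $\psi\in\Dom(|H_0|^{1/2})$ and
\begin{equation}\label{eq:plan-eig}
\psi = -\big[G_0^{1/2}(H_0-\lambda)^{-1}\big]\,\big[BG_0^{-1/2}\big]^{*}\,A\psi .
\end{equation}
This is the rigorous, bounded-operator reading of the formal eigenvalue relation $(H_0-\lambda)\psi=-B^{*}A\psi$ alluded to in Remark~\ref{rem1}: the term $A\psi$ is meaningful since $\Dom(|H_0|^{1/2})\subset\Dom(A)$, and $G_0^{1/2}(H_0-\lambda)^{-1}=\overline{(H_0-\lambda)^{-1}G_0^{1/2}}\in\mathscr{B}(\Hilbert)$ by the spectral theorem (as $\lambda\notin\sigma(H_0)$), with range inside $\Dom(|H_0|^{1/2})$, so that the right-hand side of \eqref{eq:plan-eig} is automatically an admissible element of $\Dom(|H_0|^{1/2})$.

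Granting \eqref{eq:plan-eig}, both implications follow from the single recombination identity
\begin{equation}\label{eq:plan-recomb}
A\,\big[G_0^{1/2}(H_0-\lambda)^{-1}\big] = \big[AG_0^{-1/2}\big]\,\big[G_0(H_0-\lambda)^{-1}\big]
\end{equation}
on $\Hilbert$, which is valid because functions of $H_0$ commute on the pertinent domains and $\Ran\big(G_0^{1/2}(H_0-\lambda)^{-1}\big)\subset\Dom(|H_0|^{1/2})\subset\Dom(A)$. For ``$\Rightarrow$'': if $H_V\psi=\lambda\psi$ with $\psi\neq0$, set $\phi:=A\psi$; applying $A$ to \eqref{eq:plan-eig} and using \eqref{eq:plan-recomb} gives $\phi=A\psi=-K(\lambda)\phi$, while $\phi\neq0$ since $\phi=0$ would, via \eqref{eq:plan-eig}, force $\psi=0$; hence $-1\in\sigma_{\mathrm{p}}(K(\lambda))$. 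For ``$\Leftarrow$'': if $K(\lambda)\phi=-\phi$ with $\phi\neq0$, put $\psi:=-\big[G_0^{1/2}(H_0-\lambda)^{-1}\big]\big[BG_0^{-1/2}\big]^{*}\phi\in\Dom(|H_0|^{1/2})$; then \eqref{eq:plan-recomb} yields $A\psi=-K(\lambda)\phi=\phi\neq0$, so $\psi\neq0$, and $\psi$ satisfies \eqref{eq:plan-eig} by its very definition; by the above characterisation of $H_V$ this means $\psi\in\Dom(H_V)$ with $H_V\psi=\lambda\psi$, \ie\ $\lambda\in\sigma_{\mathrm{p}}(H_V)$.

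The substance of the proof therefore lies in establishing \eqref{eq:plan-eig} — that is, in extracting from the pseudo-Friedrichs construction of Section~\ref{Sec.pseudo} a description of $\Dom(H_V)$ and of the eigenvalue equation that does not presuppose $A\psi\in\Dom(B^{*})$ — together with the routine but mildly delicate verification that the re-bracketings used above (inserting and cancelling powers of $G_0$, passing $G_0^{1/2}$ through $(H_0-\lambda)^{-1}$, and taking adjoints of the bounded factors $AG_0^{-1/2}$ and $[BG_0^{-1/2}]^{*}$) are legitimate on the relevant domains; this is where the assumptions $\Dom(|H_0|^{1/2})\subset\Dom(A)\cap\Dom(B)$ and \eqref{Ass.bounded} enter. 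By contrast, once \eqref{eq:plan-eig} is in hand, the equivalence — including the non-triviality of both eigenvectors, which ultimately rests on $\lambda\notin\sigma(H_0)$ — is immediate.
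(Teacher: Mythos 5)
Your reduction is logically sound, and the route is best described as a repackaging of the paper's argument rather than a different method. The paper proves the two implications separately (Theorems~\ref{Thm.evs1} and~\ref{Thm.evs2}), working at the level of the forms $h_0,h_V$ via the representation formula~\eqref{form} and the identity~\eqref{eq:8}; you instead work at the operator level with the definition~\eqref{pseudo}, converting the eigenvalue problem into the bounded-operator fixed-point equation~\eqref{eq:plan-eig}, after which both implications are pure algebra via~\eqref{eq:plan-recomb}. Your ``$\Leftarrow$'' half is, up to a sign convention, exactly the paper's Theorem~\ref{Thm.evs2} (same vector $\psi$, same non-vanishing argument through $A\psi=\phi\neq0$); your ``$\Rightarrow$'' half is in one respect slicker: the paper must argue separately that $A\psi=0$ would force $\psi\in\Dom(H_0)$ and $H_0\psi=\lambda\psi$ (using $\Dom(H_0^*)=\Dom(H_0)$), whereas \eqref{eq:plan-eig} yields $A\psi=0\Rightarrow\psi=0$ immediately.

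The weakness of the write-up is that you defer precisely the step in which all the content lies: \eqref{eq:plan-eig} is asserted, not proved, and it is essentially the statement that the paper's two proofs establish (there in weak form). Fortunately it is true and follows in a few lines from \eqref{pseudo}, so the gap is fillable. Indeed, writing $C:=[BG_0^{-1/2}]^*AG_0^{-1/2}\in\mathscr{B}(\Hilbert)$, the natural domain of \eqref{pseudo} is $\{\psi\in\Dom(G_0^{1/2}):(H_0G_0^{-1}+C)G_0^{1/2}\psi\in\Dom(G_0^{1/2})\}$. If $H_V\psi=\lambda\psi$, apply $G_0^{-1/2}$ to get $(H_0G_0^{-1}+C)G_0^{1/2}\psi=\lambda G_0^{-1/2}\psi$, i.e. $(H_0-\lambda)G_0^{-1/2}\psi=-[BG_0^{-1/2}]^*A\psi$ (using $G_0^{-1}G_0^{1/2}\psi=G_0^{-1/2}\psi$ and $AG_0^{-1/2}G_0^{1/2}\psi=A\psi$); applying $(H_0-\lambda)^{-1}$ and then $G_0^{1/2}$ gives \eqref{eq:plan-eig}. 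Conversely, if \eqref{eq:plan-eig} holds, then $G_0^{-1/2}\psi=-(H_0-\lambda)^{-1}[BG_0^{-1/2}]^*A\psi\in\Dom(H_0)$ and $(H_0G_0^{-1}+C)G_0^{1/2}\psi=(H_0-\lambda)G_0^{-1/2}\psi+\lambda G_0^{-1/2}\psi+[BG_0^{-1/2}]^*A\psi=\lambda G_0^{-1/2}\psi\in\Dom(G_0^{1/2})$, so $\psi$ lies in the natural domain and $H_V\psi=\lambda\psi$. With this paragraph supplied, your proof is complete and equivalent in substance to the paper's; what it buys is a cleaner treatment of the non-vanishing of $A\psi$, at the cost of relying directly on the explicit operator formula \eqref{pseudo} rather than on the form identities \eqref{form} and \eqref{eq:8}.
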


We establish the validity of this equivalence 
in the fully abstract setting above 
(Theorems~\ref{Thm.evs1} and~\ref{Thm.evs2}). 
While in slightly different settings this result has been proved a variety of times before (see, \eg, the papers cited in Remark~\ref{Remark.literature}), one of the main points of the present paper is that suitably adapted versions of 
the Birman--Schwinger principle hold also for: 
\begin{itemize}
\item
\emph{all} eigenvalues 
$\sigma_\mathrm{p}(H_V) \setminus \sigma_\mathrm{p}(H_0)$;
\hfill
(Theorem~\ref{Thm.evs})
\item
residual spectrum $\sigma_\mathrm{r}(H_V) \setminus \sigma_\mathrm{p}(H_0)$;
\hfill
(Theorems~\ref{Thm.res0} and~\ref{Thm.res})
\item
essential spectrum $\sigma_\mathrm{e}(H_V) \setminus \sigma(H_0)$.
\hfill
(Theorem~\ref{Thm.cont})
\end{itemize}   
Such variants of the Birman--Schwinger principle seem to be less known.
An exception is \cite{FKV} in which Fanelli, Vega 
and one of the present authors established results
of this type in the case of Schr\"odinger operators
(see also \cite{Davies-Nath_2002,Pushnitski_2011,Frank-Simon_2017}).

Using the Birman--Schwinger operator and the Birman--Schwinger principle,
we establish stability results about the spectrum of~$H_V$, assuming that $K_z$ is uniformly bounded in $z$, \ie, 
\begin{equation}\label{Ass.uniform}
 \sup_{ z \in \rho(H_0) } \|K_z\| < \infty.  
\end{equation}
The first of our main results in this direction 
is the following theorem.
\begin{Theorem}\label{Thm.main0}
Suppose Assumption~\ref{Ass.Ass} and \eqref{Ass.uniform}. 
Then $\sigma(H_0) \subset \sigma(H_V)$.
\end{Theorem}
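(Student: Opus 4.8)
The plan is to argue by contradiction: I would suppose that some $\lambda_\star \in \sigma(H_0)$ belongs to $\rho(H_V)$ and derive a contradiction from the rate at which $\|(H_0-z)^{-1}\|$ blows up as $z\to\lambda_\star$. Since $H_0$ is self-adjoint we have $\sigma(H_0)\subset\Real$; hence $\lambda_\star$ is real, the points $z_\eps := \lambda_\star + i\eps$ lie in $\rho(H_0)$ for every $\eps>0$, and by the spectral theorem $\|(H_0-z_\eps)^{-1}\| = \dist(z_\eps,\sigma(H_0))^{-1} = 1/\eps \to \infty$ as $\eps\downarrow 0$ (here $\dist(z_\eps,\sigma(H_0))=\eps$ because $\lambda_\star\in\sigma(H_0)\subset\Real$). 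On the other hand, $\rho(H_V)$ is open and contains $\lambda_\star$, so $z_\eps\in\rho(H_V)$ for all small $\eps$ and $\sup_{0<\eps<\delta}\|(H_V-z_\eps)^{-1}\|<\infty$ for $\delta$ small enough.

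The engine of the argument is the ``dual'' resolvent identity
\[
 (H_0-z)^{-1} = (H_V-z)^{-1} + \big[(H_V-z)^{-1}G_0^{1/2}\big]\big[BG_0^{-1/2}\big]^*\,(1+K_z)\,\big[AG_0^{-1/2}\big]\big[G_0^{1/2}(H_V-z)^{-1}\big],
\]
valid for $z\in\rho(H_0)\cap\rho(H_V)$. I would obtain it from the second resolvent identity in the form $(H_0-z)^{-1} = \big(1-(H_V-z)^{-1}B^*A\big)^{-1}(H_V-z)^{-1}$ together with the elementary relations $(H_V-z)^{-1}B^* = (H_0-z)^{-1}B^*(1+K_z)^{-1}$ and $A(H_V-z)^{-1} = (1+K_z)^{-1}A(H_0-z)^{-1}$; equivalently, it is just a rearrangement of the Birman--Schwinger resolvent formula already used in Section~\ref{Sec.pseudo} to define $H_V$. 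The decisive feature of this form of the identity is that the middle factor is $1+K_z$ and \emph{not} its inverse, so that by \eqref{Ass.uniform} it is bounded uniformly in $z$: $\|1+K_z\|\le 1+\sup_{z\in\rho(H_0)}\|K_z\|<\infty$.

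It then remains to see that the two ``sandwiching'' factors are also controlled near $\lambda_\star$. Here I would use that the pseudo-Friedrichs construction gives $\Dom(H_V)\subset\Dom(|H_0|^{1/2})=\Dom(G_0^{1/2})$: then $G_0^{1/2}(H_V-z)^{-1}$ is everywhere defined and, being the product of the closed operator $G_0^{1/2}$ with the bounded operator $(H_V-z)^{-1}$, it is closed, hence bounded by the closed graph theorem; applying this to $H_V^*$ and taking adjoints shows that $(H_V-z)^{-1}G_0^{1/2}$ likewise extends to a bounded operator. A short Neumann-series estimate around a fixed point of $\rho(H_V)$ shows that both $z\mapsto G_0^{1/2}(H_V-z)^{-1}$ and $z\mapsto\overline{(H_V-z)^{-1}G_0^{1/2}}$ are locally bounded on $\rho(H_V)$. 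Combined with the boundedness of $\big[AG_0^{-1/2}\big]$ and $\big[BG_0^{-1/2}\big]^*$ from \eqref{Ass.bounded}, the right-hand side of the displayed identity is therefore bounded uniformly for $z=z_\eps$ with $\eps$ small, which contradicts $\|(H_0-z_\eps)^{-1}\|=1/\eps\to\infty$. Hence no point of $\sigma(H_0)$ lies in $\rho(H_V)$, i.e.\ $\sigma(H_0)\subset\sigma(H_V)$.

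The step I expect to be most delicate is the rigorous handling of the resolvent identity and of the sandwiching factors at the level of the bounded extensions $\big[AG_0^{-1/2}\big]$, $\big[BG_0^{-1/2}\big]$ rather than the (possibly non-closed) operators $A,B$ themselves --- in particular, checking that $(H_V-z)^{-1}B^*$ and $A(H_V-z)^{-1}$ are genuine bounded operators and that the identity is an equality between bounded operators. Most of this bookkeeping should, however, already be available from the analysis of the pseudo-Friedrichs extension in Section~\ref{Sec.pseudo}.
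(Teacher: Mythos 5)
Your proposal is correct, but it takes a genuinely different route from the paper's. The paper also argues by contradiction, yet it never estimates $\|(H_0-z)^{-1}\|$ itself: it takes a singular sequence $f_n$ for $H_0$ at $\lambda_0$, proves the nontrivial fact $\liminf_n\|Af_n\|>0$ (via \eqref{form*} and $\lambda_0\in\rho(H_V^*)$), deduces from Proposition~\ref{Prop.2nd} the lower bound $\|A(H_V-z)^{-1}\|\geq(1+C_0)^{-1}\|A(H_0-z)^{-1}\|$, and finally contradicts the analyticity of $z\mapsto A(H_V-z)^{-1}$ at $\lambda_0\in\rho(H_V)$. Your dual identity short-circuits the singular-sequence and liminf steps entirely, since the left-hand side $\|(H_0-z_\eps)^{-1}\|=1/\eps$ blows up by the spectral theorem while every factor on the right is controlled locally on $\rho(H_V)$; the only extra input is a ``swapped'' resolvent identity involving $H_V^*$. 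To make that step rigorous you should not pass through $\bigl(1-(H_V-z)^{-1}B^*A\bigr)^{-1}$ or $(1+K_z)^{-1}$ (the invertibility of $1+K_z$ on $\rho(H_0)\cap\rho(H_V)$ is true but not established in the paper, and $B^*$ need not be densely defined); instead, (a) repeat the form computation proving \eqref{2nd.pseudo} with $\phi:=(H_V^*-\bar z)^{-1}f$ and $\psi:=(H_0-z)^{-1}g$, using \eqref{form*} and $\Dom(H_V^*)\subset\Dom(|H_0|^{1/2})$, to obtain $(H_0-z)^{-1}-(H_V-z)^{-1}=[B(H_V^*-\bar z)^{-1}]^*A(H_0-z)^{-1}$, and (b) left-multiply \eqref{2nd.pseudo} by $A$ (legitimate, since all ranges lie in $\Dom(|H_0|^{1/2})$), which gives $A(H_0-z)^{-1}=(1+K_z)A(H_V-z)^{-1}$ --- exactly the manipulation the paper performs in Step~3 of its proof. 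Substituting (b) into (a) yields your identity, with $[B(H_V^*-\bar z)^{-1}]^*=\overline{(H_V-z)^{-1}G_0^{1/2}}\,[BG_0^{-1/2}]^*$ bounded by \eqref{Ass.bounded} and the closed graph theorem; your local-boundedness argument for the sandwiching factors and the contradiction with $1/\eps\to\infty$ then go through. In comparison, your argument is shorter and purely at the level of operator norms of resolvents, whereas the paper's avoids the $H_V^*$-version of the resolvent identity at the cost of the singular sequence, the liminf estimate and the analyticity argument; both use \eqref{Ass.uniform} only through the constant $1+C_0$, with no smallness.
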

\begin{Remark}
 It is clear that the conclusion of Theorem \ref{Thm.main0} is generally false if (\ref{Ass.uniform}) is not satisfied. Just consider the case where $A=I$ and $B=i \cdot I$, where $\sigma(H_V)=\sigma(H_0)+i$, 
 \ie\ the spectrum of $H_0$ is shifted into the complex plane.
\end{Remark}

From our point of view, the remarkable thing about Theorem~\ref{Thm.main0} is that it holds without any smallness assumption on $\sup_z \|K_z\|$. Indeed, in all applications of the Birman--Schwinger principle to spectral estimates that we are aware of one assumes that $\sup_z \|K_z\|$ is sufficiently small and then derives information about $\sigma(H_V)$. The fact that some information can also be obtained without assuming that the supremum is small seems to have been completely overlooked so far.  A possible reason for this might be that in typical applications the spectrum of $H_0$ is purely essential and the assumption (\ref{Ass.uniform}) usually implies that the resolvent difference of $H_0$ and $H_V$ is compact, hence 
$\sigma(H_0)= \sigma_\mathrm{e}(H_0)=\sigma_\mathrm{e}(H_V)$. 
In general, however, there is no reason to believe that (\ref{Ass.uniform}) should imply such a compactness property.

In case that $\|K_z\|$ is indeed uniformly small, \ie
\begin{equation}\label{Ass.small1}
  \exists c<1 :
 \sup_{ z \in \rho(H_0) } \|K_z\| \leq c,
\end{equation} 
one obtains much stronger information on $\sigma(H_V)$.
\begin{Remark}
  Let us note that given (\ref{Ass.small1}) the Assumption \ref{Ass.Ass} reduces to
  (\ref{Ass.bounded}) since (\ref{Ass.spectrum}) is automatically
  satisfied as we discussed in Lemma \ref{Lemma1}.
\end{Remark}

\begin{Theorem}\label{Thm.main} 
Suppose Assumption~\ref{Ass.Ass} and \eqref{Ass.small1}. 
Then the following holds:
\begin{enumerate}
\item[\emph{(i)}] $\sigma(H_0) = \sigma(H_V)$.
\item[\emph{(ii)}] 
  $[\sigma_\mathrm{p}(H_V) \cup \sigma_\mathrm{r}(H_V) ] \subset \sigma_\mathrm{p}(H_0)$ and $\sigma_\mathrm{c}(H_0) \subset \sigma_\mathrm{c}(H_V)$.

  In particular, if $\sigma(H_0)=\sigma_\mathrm{c}(H_0)$, then $\sigma(H_V)=\sigma_\mathrm{c}(H_V)=\sigma_\mathrm{c}(H_0)$.
\end{enumerate}
\end{Theorem}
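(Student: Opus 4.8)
The plan is to deduce both claims from the inclusion $\sigma(H_0)\subset\sigma(H_V)$, which Theorem~\ref{Thm.main0} already provides (since \eqref{Ass.small1} trivially implies \eqref{Ass.uniform}), combined with the Birman--Schwinger principle in its various incarnations; the leverage is that under \eqref{Ass.small1} one has $\|K_z\|\le c<1$, so $-1\notin\sigma(K_z)$, for \emph{every} $z\in\rho(H_0)$, not just for a single $\lambda_0$ as in \eqref{Ass.spectrum}. For (i) it thus remains to show $\rho(H_0)\subset\rho(H_V)$, which I would take straight from the pseudo-Friedrichs construction of $H_V$ in Section~\ref{Sec.pseudo}: the associated Kato-type resolvent identity~(\ref{2nd}) exhibits any $\lambda\in\rho(H_0)$ with $-1\notin\sigma(K_\lambda)$ as a point of $\rho(H_V)$, and, as just noted, $-1\notin\sigma(K_\lambda)$ holds automatically. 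Hence $\rho(H_0)=\rho(H_V)$ and $\sigma(H_0)=\sigma(H_V)$.

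For (ii) I would first prove $\sigma_\mathrm{p}(H_V)\subset\sigma_\mathrm{p}(H_0)$. By part (i), $\sigma_\mathrm{p}(H_V)\subset\sigma(H_V)=\sigma(H_0)$, and since $H_0$ is self-adjoint one has $\sigma(H_0)\subset\Real$, $\sigma_\mathrm{r}(H_0)=\emptyset$ and $\sigma(H_0)=\sigma_\mathrm{p}(H_0)\cup\sigma_\mathrm{c}(H_0)$ (a disjoint union), so the only thing to rule out is an eigenvalue of $H_V$ embedded in $\sigma_\mathrm{c}(H_0)$. Here I invoke the refined Birman--Schwinger principle for all eigenvalues (Theorem~\ref{Thm.evs} below): a point of $\sigma_\mathrm{p}(H_V)\setminus\sigma_\mathrm{p}(H_0)$ yields a non-trivial solution $\phi$ of a limiting Birman--Schwinger equation whose operator realisation is obtained as a weak limit of the $K_{\lambda\pm i\eps}$ as $\eps\downarrow0$; the uniform bound \eqref{Ass.small1} is precisely what makes such a limit available and simultaneously forces $\|\phi\|\le c\,\|\phi\|$, an absurdity. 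The inclusion $\sigma_\mathrm{r}(H_V)\subset\sigma_\mathrm{p}(H_0)$ is obtained in the same way from the Birman--Schwinger principle for the residual spectrum (Theorems~\ref{Thm.res0} and~\ref{Thm.res}); alternatively, one may pass to the adjoint, observing that $H_V^*$ is again a pseudo-Friedrichs extension whose Birman--Schwinger operators are the adjoints of those of $H_V$ (so that \eqref{Ass.small1} is inherited), that $\lambda\in\sigma_\mathrm{r}(H_V)$ forces $\bar\lambda\in\sigma_\mathrm{p}(H_V^*)$, and then concluding by the eigenvalue case together with $\sigma_\mathrm{p}(H_0)\subset\Real$.

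To finish (ii), note that $\sigma_\mathrm{p}(H_V)$, $\sigma_\mathrm{r}(H_V)$ and $\sigma_\mathrm{c}(H_V)$ partition $\sigma(H_V)$. Since, by the above, $\sigma_\mathrm{p}(H_V)\cup\sigma_\mathrm{r}(H_V)\subset\sigma_\mathrm{p}(H_0)$ is disjoint from $\sigma_\mathrm{c}(H_0)$, every $\lambda\in\sigma_\mathrm{c}(H_0)\subset\sigma(H_0)=\sigma(H_V)$ necessarily lies in $\sigma_\mathrm{c}(H_V)$; this is the inclusion $\sigma_\mathrm{c}(H_0)\subset\sigma_\mathrm{c}(H_V)$. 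For the last assertion, if $\sigma(H_0)=\sigma_\mathrm{c}(H_0)$ then $\sigma_\mathrm{p}(H_0)=\emptyset$, hence $\sigma_\mathrm{p}(H_V)=\sigma_\mathrm{r}(H_V)=\emptyset$ and therefore $\sigma(H_V)=\sigma_\mathrm{c}(H_V)$; combining this with $\sigma_\mathrm{c}(H_0)\subset\sigma_\mathrm{c}(H_V)\subset\sigma(H_V)=\sigma(H_0)=\sigma_\mathrm{c}(H_0)$ gives the chain $\sigma(H_V)=\sigma_\mathrm{c}(H_V)=\sigma_\mathrm{c}(H_0)$.

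The routine part of all this is whatever happens in $\rho(H_0)$, where a Neumann series for $I+K_z$ and the pseudo-Friedrichs resolvent formula settle everything. The genuine obstacle is the points of $\sigma_\mathrm{p}(H_V)\cup\sigma_\mathrm{r}(H_V)$ lying \emph{inside} $\sigma(H_0)$: there $K_\lambda$ is not defined, so one must rely on the abstract Birman--Schwinger principle for embedded spectrum (Theorems~\ref{Thm.evs} and~\ref{Thm.res}) and, crucially, on the \emph{uniformity} of the bound in \eqref{Ass.small1}; it is this uniformity --- as opposed to smallness at a single spectral parameter --- that persists through the limiting procedure at the boundary of $\sigma(H_0)$ and delivers the contradiction.
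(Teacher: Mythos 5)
Your argument is correct, and for part (i) it takes a genuinely different (and somewhat more economical) route than the paper. The paper obtains the inclusion $\sigma(H_V)\subset\sigma(H_0)$ by splitting $\sigma(H_V)$ into point, residual and continuous parts and excluding each part outside $\sigma(H_0)$ via the Birman--Schwinger corollaries (Corollaries~\ref{Corol.evs} and~\ref{Corol.res}, plus Corollary~\ref{Corol.cont}, i.e.\ the singular-sequence Theorem~\ref{Thm.cont}, for the continuous part). You instead show directly that $\rho(H_0)\subset\rho(H_V)$: since $\|K_\lambda\|\le c<1$ for every $\lambda\in\rho(H_0)$, one has $-1\notin\sigma(K_\lambda)$, and the computation in Section~\ref{Sec.pseudo} leading to \eqref{eq:6} (carried out there only for the particular $\lambda_0$ of \eqref{Ass.spectrum}, but valid verbatim for any $\lambda\in\rho(H_0)$ with $-1\notin\sigma(K(\lambda))$) produces a bounded inverse of $H_V-\lambda$. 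This bypasses Theorem~\ref{Thm.cont} entirely for part (i); note only that the identity you should cite is \eqref{eq:6} (or the argument surrounding it), not Kato's equation \eqref{2nd}, which concerns his operator $\tilde H_V$. The reverse inclusion $\sigma(H_0)\subset\sigma(H_V)$ is taken from Theorem~\ref{Thm.main0} exactly as in the paper, which is legitimate since that proof is independent of the present theorem. For part (ii) your argument coincides in substance with the paper's: eigenvalues embedded in $\sigma_\mathrm{c}(H_0)$ are excluded by Theorem~\ref{Thm.evs} together with $\|K_{\lambda+i\eps}\|\le c$ (yielding $\|g\|^2\le c\,\|g\|^2$ for $g=A\psi\neq 0$), the residual spectrum is handled through the adjoint, equivalently Theorems~\ref{Thm.res0} and~\ref{Thm.res}, using $\|K_z^*\|=\|K_z\|$, and the final disjointness argument giving $\sigma_\mathrm{c}(H_0)\subset\sigma_\mathrm{c}(H_V)$ is identical. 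One small correction of emphasis: the weak convergence $K_{\lambda+i\eps}\,g\rightharpoonup -g$ in Theorem~\ref{Thm.evs} holds under Assumption~\ref{Ass.Ass} alone; \eqref{Ass.small1} is not what makes that limit available, it is only what turns it into the desired contradiction.
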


So the spectra of~$H_V$ and~$H_0$ coincide if the perturbation~$V$ is small
in the sense of~\eqref{Ass.small1}.   
Moreover, the spectrum of~$H_V$ is purely continuous
if it is the case of~$H_0$.
As we will see, these stability properties follow directly from Theorem \ref{Thm.main0} and from Theorem~\ref{Thm.evs0} 
and its variants mentioned below it. 
\begin{Remark}
 It is well known that $\sigma(H_0)=\sigma(H_V)$ need not be true if $\sup \|K_z\| \geq 1$, see \eg\ the proof of the $d=3$ case of Theorem 2 in \cite{Frank_2011}. 
\end{Remark}

We do not know whether in general, given (\ref{Ass.small1}), the continuous-, point- and residual spectra of $H_0$ and $H_V$ coincide. However, this is the case if $A$ is \emph{relatively smooth} with respect to \emph{$H_0$}, which means that $A: D(A) \subset \Hilbert \to \Hilbert'$ is closed with $D(H_0) \subset D(A)$ and 
\begin{equation}
  \label{Ass.smooth}
  \sup_{z \in \Com \setminus \Real, \ \psi \in \Hilbert \setminus \{0\}} |\Im(z)| \cdot \|A(H_0-z)^{-1}\psi\|^2/\|\psi\|^2 < \infty.
\end{equation}
The notion of relative smoothness is due to Kato \cite{Kato_1966} and we should note that there exist several equivalent ways to introduce this concept.

\begin{Corollary}\label{Cor1}
Suppose Assumption~\ref{Ass.Ass} and \eqref{Ass.small1}. 
Moreover, assume that $A$ is relatively smooth with respect to $H_0$. Then
\[ \sigma_\mathrm{c}(H_V)=\sigma_\mathrm{c}(H_0), \qquad \sigma_\mathrm{p}(H_V)=\sigma_\mathrm{p}(H_0) \qquad \text{and} \qquad \sigma_\mathrm{r}(H_V)=\sigma_\mathrm{r}(H_0)=\emptyset.\]
\end{Corollary}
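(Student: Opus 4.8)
The strategy is to add the one structural fact that Theorem~\ref{Thm.main} does not already supply and then to finish by bookkeeping on the spectral components. Recall that, under \eqref{Ass.small1}, Theorem~\ref{Thm.main} already gives $\sigma(H_0)=\sigma(H_V)$, $\sigma_\mathrm{p}(H_V)\subset\sigma_\mathrm{p}(H_0)$, $\sigma_\mathrm{r}(H_V)\subset\sigma_\mathrm{p}(H_0)$ and $\sigma_\mathrm{c}(H_0)\subset\sigma_\mathrm{c}(H_V)$. The plan is therefore: (1) to prove that relative smoothness of $A$ forces $A$ to annihilate every eigenvector of $H_0$; (2) to deduce the reverse inclusion $\sigma_\mathrm{p}(H_0)\subset\sigma_\mathrm{p}(H_V)$, hence $\sigma_\mathrm{p}(H_0)=\sigma_\mathrm{p}(H_V)$; (3) to observe $\sigma_\mathrm{r}(H_0)=\emptyset$ (self-adjointness of $H_0$) and $\sigma_\mathrm{r}(H_V)=\emptyset$; and (4) to conclude $\sigma_\mathrm{c}(H_0)=\sigma_\mathrm{c}(H_V)$ by comparing the decompositions of $\sigma(H_0)=\sigma(H_V)$ into point, continuous and residual parts.

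For step~(1) I would argue as follows. If $H_0\psi=\mu\psi$ with $\mu\in\Real$ and $\psi\neq0$, then $\psi\in\Dom(H_0)\subset\Dom(A)$, and for $z=\mu+i\eps$ with $\eps>0$ one has $(H_0-z)^{-1}\psi=i\eps^{-1}\psi$, so the quantity under the supremum in \eqref{Ass.smooth} equals $\eps^{-1}\|A\psi\|^2/\|\psi\|^2$. Since that supremum is finite, letting $\eps\to0^{+}$ forces $A\psi=0$. For step~(2), with such a $\psi$ we then have $V\psi=B^{*}A\psi=0$, whence $\psi\in\Dom(H_0)\cap\Dom(V)=\Dom(H_0+V)\subset\Dom(H_V)$ --- here using that $H_V$ is by construction an extension of the operator sum $H_0+V$ (see Section~\ref{Sec.pseudo}) --- and $H_V\psi=(H_0+V)\psi=\mu\psi$, that is, $\mu\in\sigma_\mathrm{p}(H_V)$. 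Combined with the inclusion $\sigma_\mathrm{p}(H_V)\subset\sigma_\mathrm{p}(H_0)$ from Theorem~\ref{Thm.main}, this yields $\sigma_\mathrm{p}(H_0)=\sigma_\mathrm{p}(H_V)$.

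For step~(3): since $H_0$ is self-adjoint, for real $\lambda$ we have $\overline{\Ran(H_0-\lambda)}=\Ker(H_0-\lambda)^{\perp}$, so any $\lambda\in\sigma(H_0)\setminus\sigma_\mathrm{p}(H_0)$ has $H_0-\lambda$ with dense range and hence $\lambda\in\sigma_\mathrm{c}(H_0)$; thus $\sigma_\mathrm{r}(H_0)=\emptyset$. On the other hand $\sigma_\mathrm{r}(H_V)\subset\sigma_\mathrm{p}(H_0)=\sigma_\mathrm{p}(H_V)$, and the residual and point spectra are disjoint (the paper's definitions of $\sigma_\mathrm{c}$ and $\sigma_\mathrm{r}$ both exclude $\sigma_\mathrm{p}$), so $\sigma_\mathrm{r}(H_V)=\emptyset$. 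For step~(4), the spectrum of a closed operator is the disjoint union of its point, continuous and residual parts, so $\sigma(H_0)=\sigma_\mathrm{p}(H_0)\cup\sigma_\mathrm{c}(H_0)$ and $\sigma(H_V)=\sigma_\mathrm{p}(H_V)\cup\sigma_\mathrm{c}(H_V)$ with both unions disjoint; since the total spectra coincide and the point spectra coincide, so do the continuous spectra.

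I do not expect a serious obstacle. The one point deserving care is the domain issue in step~(2): one must check that an eigenvector $\psi$ of $H_0$ with $A\psi=0$ genuinely belongs to $\Dom(H_V)$ and not merely to $\Dom(H_0)$. This follows from the construction of $H_V$ in Section~\ref{Sec.pseudo}, which makes $H_V$ an extension of $H_0+V$ whose domain contains every $\psi\in\Dom(H_0)$ with $A\psi\in\Dom(B^{*})$ --- in particular all eigenvectors of $H_0$, since $A$ annihilates them. One should also confirm the elementary fact invoked in step~(4), that closedness of $H_0$ and $H_V$ makes the three-fold decomposition of the spectrum a genuine partition.
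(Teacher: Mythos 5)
Your proposal is correct and follows essentially the same route as the paper's proof: the key step in both is that relative smoothness applied to the resolvent identity $(H_0-\lambda-i\eps)^{-1}\psi=(-i\eps)^{-1}\psi$ forces $A\psi=0$ for any eigenvector of $H_0$, so that $\psi\in\Dom(H_V)$ with $H_V\psi=H_0\psi$, giving $\sigma_\mathrm{p}(H_0)\subset\sigma_\mathrm{p}(H_V)$. The remaining bookkeeping you spell out is exactly what the paper compresses into the phrase that, in view of Theorem~\ref{Thm.main}, this inclusion suffices.
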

\begin{proof}[Proof of Corollary \ref{Cor1}]
 In view of Theorem \ref{Thm.main} it is sufficient to show that $\sigma_\mathrm{p}(H_0) \subset \sigma_\mathrm{p}(H_V)$. So suppose that for some $\lambda \in \Real$ and $\psi \in \Dom(H_0) \setminus \{0\}$ we have $H_0\psi = \lambda \psi$. Then for $\eps > 0$ we also have $-i\eps (H_0-\lambda-i\eps)^{-1}\psi = \psi$ and hence 
\[  \|A(H_0-\lambda-i\eps)^{-1}\psi\| = \eps^{-1} \|A\psi\| .\]
Since this is true for all $\eps > 0$, assumption (\ref{Ass.smooth}) implies that $A\psi =0$. We will see below that $H_V$ is a closed extension of $H_0 + B^*A$, so we obtain that $\psi \in \Dom(H_V)$ and $H_V \psi = H_0 \psi = \lambda \psi$.  
\end{proof}

\begin{Remark} \label{Rem.smooth}
Even if $A$ and $B$ are closed and satisfy Assumption \ref{Ass.Ass} and (\ref{Ass.small1}), this does not imply that $A$ is $H_0$-smooth. For instance, the mentioned assumptions on $A$ and $B$ are satisfied if $B=0$ and $A$ is  \emph{any} closed operator from $\Hilbert \to \Hilbert'$ with $D(|H_0|^{1/2}) \subset D(A)$.
\end{Remark}

There is one important case where (\ref{Ass.small1}) does imply smoothness of $A$ with respect to $H_0$, namely if $A=DB$ for some $D \in \mathscr{B}(\Hilbert')$. This leads to another corollary of Theorem \ref{Thm.main}.

\begin{Corollary}\label{Cor2}
Suppose Assumption~\ref{Ass.Ass} and~\eqref{Ass.small1}. Moreover, suppose that $A$ is closed and that $A=DB$ for some  $D \in \mathscr{B}(\Hilbert')$. Then   
\[ \sigma_\mathrm{c}(H_V)=\sigma_\mathrm{c}(H_0), \qquad \sigma_\mathrm{p}(H_V)=\sigma_\mathrm{p}(H_0) \qquad \text{and} \qquad \sigma_\mathrm{r}(H_V)=\sigma_\mathrm{r}(H_0)=\emptyset.\]
\end{Corollary}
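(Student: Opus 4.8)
The plan is to deduce Corollary~\ref{Cor2} from Corollary~\ref{Cor1}. Since Assumption~\ref{Ass.Ass} and \eqref{Ass.small1} are already among the hypotheses, it suffices to check that the closed operator $A$ is relatively smooth with respect to $H_0$, i.e.\ that \eqref{Ass.smooth} holds; the inclusion $\Dom(H_0)\subset\Dom(A)$ required in the definition of relative smoothness is automatic, because $\Dom(H_0)\subset\Dom(|H_0|^{1/2})\subset\Dom(A)$ by Assumption~\ref{Ass.Ass}.

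To verify \eqref{Ass.smooth} I would introduce the ``$A$-symmetrised'' Birman--Schwinger operator
\[
\widetilde K(z):=\big[AG_0^{-1/2}\big]\big[G_0(H_0-z)^{-1}\big]\big[AG_0^{-1/2}\big]^*,\qquad z\in\rho(H_0),
\]
and make two observations. First, the hypothesis $A=DB$ together with $\Dom(|H_0|^{1/2})\subset\Dom(B)$ gives $AG_0^{-1/2}=D\,(BG_0^{-1/2})$, hence $\big[AG_0^{-1/2}\big]^*=\big[BG_0^{-1/2}\big]^*D^*$, so that $\widetilde K(z)=K(z)D^*$; by \eqref{Ass.small1} this yields the uniform bound $\|\widetilde K(z)\|\le\|K(z)\|\,\|D\|\le c\,\|D\|$ for every $z\in\rho(H_0)$. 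Second, for $z\in\Com\setminus\Real$ (so that both $z$ and $\bar z$ lie in $\rho(H_0)$, as $H_0$ is self-adjoint) the operator $T_z:=A(H_0-z)^{-1}$ is everywhere defined and bounded, since $(H_0-z)^{-1}$ maps $\Hilbert$ into $\Dom(H_0)\subset\Dom(A)$ and $T_z=\big[AG_0^{-1/2}\big]\big[G_0^{1/2}(H_0-z)^{-1}\big]$ is a product of bounded operators. Using the first resolvent identity in the form $(H_0-z)^{-1}(H_0-\bar z)^{-1}=(2i\,\Im z)^{-1}\big[(H_0-z)^{-1}-(H_0-\bar z)^{-1}\big]$ and the functional calculus for $H_0$ (to factor $G_0(H_0-z)^{-1}(H_0-\bar z)^{-1}=\big[G_0^{1/2}(H_0-z)^{-1}\big]\big[G_0^{1/2}(H_0-\bar z)^{-1}\big]$ and to recognise $G_0^{1/2}(H_0-\bar z)^{-1}$ as the adjoint of $G_0^{1/2}(H_0-z)^{-1}$), one then gets, exactly as in the manipulations of Remark~\ref{rem1}, the identity
\[
\widetilde K(z)-\widetilde K(\bar z)=2i\,\Im z\;T_zT_z^*,\qquad z\in\Com\setminus\Real.
\]
Taking norms yields $|\Im z|\,\|T_z\|^2=|\Im z|\,\|T_zT_z^*\|=\tfrac12\|\widetilde K(z)-\widetilde K(\bar z)\|\le c\,\|D\|$ uniformly in $z\in\Com\setminus\Real$, which is precisely \eqref{Ass.smooth}. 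Hence $A$ is relatively smooth with respect to $H_0$, and Corollary~\ref{Cor1} gives the claimed equalities for the continuous, point, and residual spectra.

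None of this is deep; the only points needing a little care are the operator-theoretic bookkeeping, namely that $\big[AG_0^{-1/2}\big]\big[G_0^{1/2}(H_0-z)^{-1}\big]$ really coincides with $A(H_0-z)^{-1}$ on all of $\Hilbert$ (because $(H_0-z)^{-1}\Hilbert\subset\Dom(H_0)\subset\Dom(|H_0|^{1/2})$ and $G_0^{-1/2}G_0^{1/2}$ acts as the identity there), and the adjoint/commutation identities for the bounded functions $G_0^{1/2}(H_0-z)^{-1}$ and $G_0(H_0-z)^{-1}$ of the self-adjoint operator $H_0$ — all of the same kind as in Remark~\ref{rem1} and in the proof of Lemma~\ref{Lemma1}. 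The genuinely new input is minimal: the assumption $A=DB$ with $D$ bounded is used \emph{only} to turn the smallness of $K(z)$ into the uniform boundedness of $\widetilde K(z)$; no closedness of $B$ is needed, and one does \emph{not} need the $B$-symmetrised operator $\big[BG_0^{-1/2}\big]\big[G_0(H_0-z)^{-1}\big]\big[BG_0^{-1/2}\big]^*$ to be bounded, which in general it is not.
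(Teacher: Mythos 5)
Your proof is correct, and it follows the paper's overall strategy: reduce Corollary~\ref{Cor2} to Corollary~\ref{Cor1} by showing that $A=DB$ together with \eqref{Ass.small1} forces $A$ to be $H_0$-smooth. The difference lies in how the smoothness is certified. The paper invokes Kato's equivalent characterisation of $H_0$-smoothness \cite[Thm.~5.1]{Kato_1966} (the condition \eqref{eq:5} involving $A^*\psi$) and bounds the sandwiched resolvent form by $\|K_z\|\,\|D\|$; you instead verify the definition \eqref{Ass.smooth} directly, via the factorisation $\widetilde K(z)=K(z)D^*$, the first resolvent identity giving $\widetilde K(z)-\widetilde K(\bar z)=2i\,\Im z\;T_zT_z^*$ with $T_z=A(H_0-z)^{-1}$, and the identity $\|T_zT_z^*\|=\|T_z\|^2$. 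In effect you inline the relevant direction of Kato's equivalence, which makes the argument self-contained (no external citation needed) and has the small additional merit of never touching $A^*$ or $\Dom(A^*)$ — the paper's estimate implicitly uses $A^*\psi=B^*D^*\psi$ on $\Dom(A^*)$, a point your route sidesteps entirely. The operator-theoretic bookkeeping you flag (that $[AG_0^{-1/2}][G_0^{1/2}(H_0-z)^{-1}]$ agrees with $A(H_0-z)^{-1}$ on all of $\Hilbert$, and that $[G_0^{1/2}(H_0-z)^{-1}]^*=G_0^{1/2}(H_0-\bar z)^{-1}$ by the functional calculus) is handled correctly, so there is no gap; the price of your route is merely a slightly longer computation where the paper gets away with a one-line estimate plus a citation.
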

\begin{proof}[Proof of Corollary \ref{Cor2}]
By \cite[Thm.~5.1]{Kato_1966} the $H_0$-smoothness of $A$ is equivalent to the fact that 
\begin{equation}
 \sup_{z \in \Com \setminus \Real, \ \psi \in \Dom(A^*) \setminus \{0\}} 
 \big| \big( 
 [ (H_0-z)^{-1}-(H_0-\overline z)^{-1}] A^*\psi, A^* \psi 
 \big) \big| / \|\psi\|^2 < \infty.\label{eq:5}
\end{equation}
But using our assumptions, for $z \in \Com \setminus \Real$ and $\psi \in \Dom(A^*)$ with $\|\psi\|=1$ we can estimate
\begin{align*}
| ( (H_0-z)^{-1} A^*\psi, A^* \psi ) | &=  | (A(H_0-z)^{-1} B^*D^*\psi, \psi)| \leq \|K_z\| \|D\| \leq \|D\| \sup_ {z \in \Com \setminus \Real} \|K_z\|.
\end{align*}
Using the same inequality to estimate the second term of the difference in (\ref{eq:5}) we see that the left-hand side of (\ref{eq:5}) is indeed finite. Now apply Corollary \ref{Cor1}. 
\end{proof}

In order to put Theorem \ref{Thm.main} and its corollaries into perspective, we need to take a closer look at Kato's classical work \cite{Kato_1966}. We do this in the following section.

\subsection{Kato's results}
The main result of Kato's 1966 paper \cite{Kato_1966} is the following theorem.

\begin{Theorem}%
[{\cite[Thm.~1.5]{Kato_1966}}]\label{Thm.main.similar}
Let $H_0$ be self-adjoint in $\Hilbert$ and suppose that  $A,B$ are closed operators from~$\Hilbert$ to~$\Hilbert'$ 
with $\Dom(H_0) \subset \Dom(A) \cap \Dom(B)$ which are smooth relative to $H_0$. Moreover, suppose that there exists $c < 1$ such that
\begin{equation}
\sup_{ z \in \Com \setminus \Real} \|A(H_0-z)^{-1}B^*\| \leq c.\label{eq:4}
\end{equation}
Then there exists a closed extension $\tilde{H}_V$ of $H_0+B^*A$ which is similar to $H_0$ (so in particular, the continuous, point and residual spectra of $\tilde{H}_V$ and $H_0$ coincide). Moreover, the operator $\tilde{H}_V$ satisfies the generalised second resolvent equation 
\begin{equation}\label{2nd}
  \forall \xi\in\Com\setminus\Real \,, \qquad
  (\tilde{H}_V-\xi)^{-1} - (H_0-\xi)^{-1}
  = - \overline{(H_0-\xi)^{-1}B^*} A (\tilde{H}_V-\xi)^{-1}
  \,.
\end{equation} 
\end{Theorem}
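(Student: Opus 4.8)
\medskip
\noindent\emph{Sketch of a proof.}
The plan is to realise $\tilde H_V$ directly through an explicit resolvent formula, to verify that it is a closed extension of $H_0+B^*A$ obeying~\eqref{2nd} and enjoying a uniform resolvent bound, and then to obtain the similarity with $H_0$ from a pair of wave operators; the coincidence of the three spectral parts is an automatic consequence of the similarity.

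First I would collect the bounded building blocks. Since $A,B$ are closed with $\Dom(H_0)\subset\Dom(A)\cap\Dom(B)$, the closed graph theorem makes $A R_0(z)$ and $B R_0(z)$ bounded for $z\in\Com\setminus\Real$, where $R_0(z):=(H_0-z)^{-1}$, hence also $\overline{R_0(z)A^*}:=(A R_0(\bar z))^*$ and $\overline{R_0(z)B^*}:=(B R_0(\bar z))^*$ are bounded; the $H_0$-smoothness of $A$ and $B$ upgrades this to $\max\{\|A R_0(z)\|,\|B R_0(z)\|,\|\overline{R_0(z)A^*}\|,\|\overline{R_0(z)B^*}\|\}\le \Gamma|\Im z|^{-1/2}$. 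Assumption~\eqref{eq:4} then says precisely that the densely defined operator $A R_0(z)B^*$ is bounded by $c$; writing $K_z\in\mathscr{B}(\Hilbert')$ for its closure, one has $\|K_z\|\le c<1$, so $1+K_z$ is invertible. Now set $R(z):=R_0(z)-\overline{R_0(z)B^*}(1+K_z)^{-1}A R_0(z)$ for $z\in\Com\setminus\Real$. A direct computation --- in which the resolvent identity for $R_0$ converts each mixed product $A R_0(z)R_0(w)$, $R_0(z)\overline{R_0(w)B^*}$, $A R_0(z)\overline{R_0(w)B^*}$ into a finite difference of $A R_0$, $\overline{R_0 B^*}$ and $K$, the boundedness of $K$ being what renders these manipulations legitimate in the presence of the unbounded $B^*$ --- shows that $z\mapsto R(z)$ satisfies the resolvent identity on each half-plane; moreover $\Ker R(z)=\{0\}$ (apply $A$ to $R(z)\psi=0$ and use $A\overline{R_0(z)B^*}=K_z$ to get $A R_0(z)\psi=0$, then $R_0(z)\psi=0$, then $\psi=0$) and $\overline{\Ran R(z)}=\Hilbert$ (the adjoint family has the same structure with $A$ and $B$ interchanged). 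Thus $R(z)=(\tilde H_V-z)^{-1}$ for a unique closed densely defined $\tilde H_V$ with $\Com\setminus\Real\subset\rho(\tilde H_V)$ --- this is Kato's \emph{pseudo-Friedrichs} construction \cite[Sec.~VI.3.4]{Kato}. Evaluating $R(z)(H_0+B^*A-z)\psi$ for $\psi\in\Dom(H_0)$ with $A\psi\in\Dom(B^*)$ gives $\psi$, so $\tilde H_V\supset H_0+B^*A$; since $A R(z)=(1+K_z)^{-1}A R_0(z)$ on $\Ran R(z)\subset\Dom(A)$, feeding this back into the definition of $R(z)$ reproduces~\eqref{2nd}; and the bounds above give $\|(\tilde H_V-z)^{-1}\|\le M|\Im z|^{-1}$ uniformly on $\Com\setminus\Real$.

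It remains --- and this is the crux --- to produce a bounded, boundedly invertible $W$ with $W H_0=\tilde H_V W$; then $\tilde H_V=W H_0 W^{-1}$ is similar to $H_0$, and since conjugation by $W$ preserves injectivity, range, closure and density of $\tilde H_V-\lambda=W(H_0-\lambda)W^{-1}$, the point, continuous and residual spectra of $\tilde H_V$ and $H_0$ coincide (and are those named in the statement). I would take $W$ to be the wave operator $W\psi:=\lim_{t\to+\infty}e^{it\tilde H_V}e^{-itH_0}\psi$ (limit in $\Hilbert$), with inverse candidate $\tilde W\psi:=\lim_{t\to+\infty}e^{itH_0}e^{-it\tilde H_V}\psi$. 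To give these meaning, I would first note that the resolvent estimates yield that $\pm i\tilde H_V$ generate a uniformly bounded $C_0$-group; next bootstrap the $H_0$-smoothness of $B$ into $\tilde H_V$-smoothness of $B^*$ using~\eqref{2nd} and $c<1$; and finally run Cook's method, writing $\frac{d}{dt}\bigl(e^{it\tilde H_V}e^{-itH_0}\psi\bigr)=i\,e^{it\tilde H_V}B^*\bigl(A\,e^{-itH_0}\psi\bigr)$ on a suitable dense set and estimating $\int\|e^{it\tilde H_V}B^*(A e^{-itH_0}\psi)\|\,dt$ by combining $\int\|A e^{-itH_0}\psi\|^2\,dt\le C\|\psi\|^2$ ($H_0$-smoothness of $A$) with the $\tilde H_V$-smoothness of $B^*$. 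The chain rule then yields $\tilde W W=W\tilde W=I$ and $W H_0=\tilde H_V W$, finishing the proof. (When in addition $\Dom(|H_0|^{1/2})\subset\Dom(A)\cap\Dom(B)$ --- which itself follows from the $H_0$-smoothness and closedness of $A,B$ via the local bound $\|A E_0(\Delta)\|^2\lesssim|\Delta|$ on bounded intervals $\Delta$ and a dyadic splitting of the spectral measure $E_0$ of $H_0$ --- this $\tilde H_V$ is the extension $H_V$ of the present paper.)

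The genuinely hard step is the similarity just described. A uniform resolvent bound $\|(\tilde H_V-z)^{-1}\|\le M|\Im z|^{-1}$ does \emph{not} on its own force an operator to be similar to a self-adjoint one, so scattering-theoretic input is unavoidable; and since $\tilde H_V$ is non-self-adjoint one cannot retreat to spectral calculus --- both the boundedness of the group $e^{it\tilde H_V}$ and the $\tilde H_V$-smoothness of $B^*$ must be wrung out of resolvent estimates. The technical core is the Cook-type integrability bound, which couples smoothness with respect to the two non-commuting operators $H_0$ and $\tilde H_V$; this is in substance the content of Sections~3--4 of~\cite{Kato_1966}. By comparison the pseudo-resolvent bookkeeping of the second paragraph, and the domain inclusion recorded at its end, are routine --- the smoothness hypotheses being exactly what licenses replacing products such as $A R_0(z)R_0(w)B^*$ by bounded operators.
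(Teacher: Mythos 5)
Your first two paragraphs (the explicit pseudo-resolvent $R(z)=R_0(z)-\overline{R_0(z)B^*}(1+K_z)^{-1}AR_0(z)$, the verification that it is the resolvent of a closed, densely defined extension of $H_0+B^*A$, the identity \eqref{2nd}, and the uniform bound $\|R(z)\|\le M|\Im z|^{-1}$) are sound and agree with how Kato sets up the problem; note that the paper itself offers no proof of this theorem but cites \cite[Thm.~1.5]{Kato_1966}, remarking only that Kato's argument is \emph{stationary} scattering theory.

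The genuine gap is in your third step, and it sits exactly at what you yourself call the crux. You propose time-dependent wave operators $W\psi=\lim_{t\to+\infty}e^{it\tilde H_V}e^{-itH_0}\psi$ via Cook's method, and to make sense of this you assert that ``the resolvent estimates yield that $\pm i\tilde H_V$ generate a uniformly bounded $C_0$-group.'' This does not follow: a first-order bound $\|(\tilde H_V-z)^{-1}\|\le M|\Im z|^{-1}$ with $M>1$ does not even guarantee that $i\tilde H_V$ generates a $C_0$-group, let alone a bounded one — Hille--Yosida requires $\|(\tilde H_V-z)^{-n}\|\le M|\Im z|^{-n}$ for \emph{all} $n$, and such power bounds cannot be extracted from the first-order estimate (Cauchy estimates lose a factor $2^n$). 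In fact, uniform boundedness of $e^{it\tilde H_V}$ for a densely defined operator with real spectrum is essentially tantamount to the similarity you are trying to prove, so the proposed bootstrap is circular; the same circularity infects the Cook integrability estimate, since converting resolvent-type smoothness of $B$ relative to $\tilde H_V^*$ into the time-integral bound $\int\|Be^{-it\tilde H_V^*}\phi\|^2\,dt\le C\|\phi\|^2$ again presupposes the bounded group. Kato's actual proof avoids this entirely: he constructs $W$ and its inverse \emph{stationarily}, as limits of spectral/resolvent integrals whose convergence is controlled by the $L^2_\lambda$-estimates on $\|AR_0(\lambda\pm i\eps)u\|$ and $\|BR(\lambda\pm i\eps)^*v\|$ furnished by smoothness, and reads off boundedness, bounded invertibility and the intertwining relation from resolvent identities — no perturbed group is ever used. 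To repair your argument you would either have to reproduce that stationary construction or first prove the uniform group bound by independent means, which you have not indicated. (A minor additional caveat: the parenthetical claim that $H_0$-smoothness plus closedness forces $\Dom(|H_0|^{1/2})\subset\Dom(A)$ via $\|AE_0(\Delta)\|^2\lesssim|\Delta|$ and dyadic summation does not go through as stated — the dyadic pieces are only square-summable — which is presumably why the paper's appendix \emph{assumes} $\Dom(A)=\Dom(B)=\Dom(|H_0|^{1/2})$ rather than deriving it.)
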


\begin{Remark}
Here the similarity means that there exists an operator
$W \in \mathscr{B}(\Hilbert)$ such that $W^{-1} \in \mathscr{B}(\Hilbert)$
and $\tilde{H}_V=WH_0W^{-1}$. In other words, $\tilde{H}_V$~is \emph{quasi-self-adjoint} (\cf~\cite{KS-book}). We note that Kato actually states his theorem for the more general case that $H_0$ is closed and densely defined with $\sigma(H_0) \subset \Real$. 
\end{Remark}

To compare Kato's result with our results of the previous section, one first needs to check that his operator $\tilde{H}_V$ and our pseudo-Friedrichs extension $H_V$ (to be constructed below) do indeed coincide if Assumption~\ref{Ass.Ass} is satisfied. This will be done in the Appendix (Proposition \ref{Prop.Appendix}) under the additional assumption that $\Dom(A)=\Dom(B)=\Dom(|H_0|^{1/2})$.

Now let us start with a comparison of the assumptions of Kato and of our results above. First, we note that Kato requires the operators $A$ and $B$ to be closed, which we don't, but that he doesn't assume that $\Dom(|H_0|^{1/2}) \subset \Dom(A) \cap \Dom(B)$, which we do. Second, we note that given Kato's assumptions, the operator $A(H_0-z)^{-1}B^*$ is just the closure of our Birman--Schwinger operator $K(z)$, 
so assumption~(\ref{eq:4}) 
is the same as our assumption~(\ref{Ass.small1}). In particular, let us emphasise that Kato does not provide any conclusions under the weaker assumption (\ref{Ass.uniform}) as we do in Theorem \ref{Thm.main0} above. Moreover, in addition to the smallness assumption (\ref{eq:4}), Kato does also require that $A$ and $B$ are $H_0$-smooth (which does not follow from (\ref{eq:4}) as we discussed in Remark \ref{Rem.smooth}) so the spectral stability results we obtain in Theorem \ref{Thm.main} and Corollary \ref{Cor1} are certainly not a consequence of  Kato's Theorem~\ref{Thm.main.similar}. 
Having made all these observations we of course also have to admit that in case that all of Kato's (and our) assumptions are satisfied, his conclusion that $H_V$ and $H_0$ are similar is considerably stronger than our observation that their spectra coincide. In particular, using Kato's result one can derive the following improved version of Corollary \ref{Cor2}.

\begin{Corollary}\label{Cor3}
Suppose Assumption~\ref{Ass.Ass}, \eqref{Ass.small1} and that $\Dom(A)=\Dom(B)=\Dom(|H_0|^{1/2})$. Moreover, suppose that $A$ and $B$ are closed and that $A=D_0B$ and $B=D_1A$ for some  $D_0,D_1 \in \mathscr{B}(\Hilbert')$. Then $H_V$ and $H_0$ are similar.
\end{Corollary}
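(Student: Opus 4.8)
The plan is to reduce the statement to Kato's Theorem~\ref{Thm.main.similar} together with the identification of Kato's extension $\tilde H_V$ with the pseudo-Friedrichs extension $H_V$, which is carried out (under exactly the domain hypothesis $\Dom(A)=\Dom(B)=\Dom(|H_0|^{1/2})$ assumed here) in Proposition~\ref{Prop.Appendix} in the Appendix. So I would first check that all hypotheses of Theorem~\ref{Thm.main.similar} are in force: $A$ and $B$ are closed by assumption; the inclusion $\Dom(H_0)\subset\Dom(A)\cap\Dom(B)$ holds since $\Dom(H_0)\subset\Dom(|H_0|^{1/2})=\Dom(A)=\Dom(B)$; and the smallness condition~\eqref{eq:4} is literally our~\eqref{Ass.small1}, because under the present domain assumptions $A(H_0-z)^{-1}B^*$ equals the bounded Birman--Schwinger operator $K_z$ (see the discussion following Theorem~\ref{Thm.main.similar}). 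The only nontrivial point is that \emph{both} $A$ and $B$ must be $H_0$-smooth.

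For $A$ this is exactly the setting of Corollary~\ref{Cor2} (with $D=D_0$), whose proof establishes the $H_0$-smoothness of $A$ as its main step. For $B$ I would run the symmetric argument, using $B=D_1A$ and $A=D_0B$. By Kato's characterisation~\eqref{eq:5} with $A$ replaced by $B$, $H_0$-smoothness of $B$ amounts to a uniform bound on $\big|\big([(H_0-z)^{-1}-(H_0-\bar z)^{-1}]B^*\psi,B^*\psi\big)\big|/\|\psi\|^2$ over $z\in\Com\setminus\Real$ and $\psi\in\Dom(B^*)\setminus\{0\}$. Since $D_1$ is bounded, $B=D_1A$ gives $B^*=A^*D_1^*$ on $\Dom(B^*)=(D_1^*)^{-1}(\Dom(A^*))$, so for such $\psi$ the vector $(H_0-z)^{-1}B^*\psi=(H_0-z)^{-1}A^*D_1^*\psi$ lies in $\Dom(H_0)\subset\Dom(|H_0|^{1/2})=\Dom(A)=\Dom(B)$, and one rewrites
\[
\big((H_0-z)^{-1}B^*\psi,B^*\psi\big)
=\big(A(H_0-z)^{-1}A^*D_1^*\psi,D_1^*\psi\big)
=\big(D_0\,B(H_0-z)^{-1}A^*D_1^*\psi,D_1^*\psi\big),
\]
using $A=D_0B$ in the last step. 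As in the proof of Corollary~\ref{Cor2}, $B(H_0-z)^{-1}A^*$ is a restriction of the bounded operator $(A(H_0-\bar z)^{-1}B^*)^*=K_{\bar z}^*$ (all relevant vectors landing in $\Dom(H_0)\subset\Dom(A)\cap\Dom(B)$), so the last expression is bounded by $\|D_0\|\,\|D_1\|^2\,\|K_{\bar z}\|\le \|D_0\|\,\|D_1\|^2\,c$; the same bound applies to the $(H_0-\bar z)^{-1}$-term, whence $B$ is $H_0$-smooth. Equivalently, one may observe that $K_{\bar z}^{*}$ is the Birman--Schwinger operator of the ``transposed'' problem $(\tilde H_0,\tilde A,\tilde B)=(H_0,B,A)$, which still satisfies Assumption~\ref{Ass.Ass} and~\eqref{Ass.small1} and has $\tilde A=B$ closed with $\tilde A=D_1\tilde B$, so that Corollary~\ref{Cor2} applies to it verbatim.

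With both smoothness properties established, Kato's Theorem~\ref{Thm.main.similar} produces a closed extension $\tilde H_V$ of $H_0+B^*A$ that is similar to $H_0$, and Proposition~\ref{Prop.Appendix} identifies $\tilde H_V$ with $H_V$; hence $H_V$ is similar to $H_0$.

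I expect the only genuinely delicate point to be the domain bookkeeping in the symmetric argument for the $H_0$-smoothness of $B$ — specifically the identity $B^*=A^*D_1^*$ and the fact that $B(H_0-z)^{-1}A^*$ is a restriction of $K_{\bar z}^{*}$. Everything else is an assembly of Corollary~\ref{Cor2}, Kato's Theorem~\ref{Thm.main.similar}, and Proposition~\ref{Prop.Appendix}.
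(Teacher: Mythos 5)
Your proposal is correct and follows essentially the same route as the paper: the paper's one-line proof likewise deduces $H_0$-smoothness of both $A$ and $B$ from the argument in the proof of Corollary~\ref{Cor2} (applied symmetrically via $A=D_0B$ and $B=D_1A$), then invokes Kato's Theorem~\ref{Thm.main.similar}, with the identification $\tilde H_V=H_V$ supplied by Proposition~\ref{Prop.Appendix} under the assumed domain equality. Your spelled-out estimate for the smoothness of $B$ (using $B^*=A^*D_1^*$ and $B(H_0-z)^{-1}A^*\subset K_{\bar z}^*$) is exactly the detail the paper leaves implicit, and it checks out.
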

\begin{proof}
  As the proof of Corollary \ref{Cor2} showed, given the above assumptions $A$ and $B$ are smooth relative to~$H_0$, hence Kato's theorem applies.
\end{proof}
\begin{Remark}
In particular, the previous corollary applies in case that $A=UB$, where $U \in \mathscr{B}(\Hilbert, \Hilbert')$ is a partial isometry with initial set $\overline{\Ran}(B)$, since then $B=U^*A$. This example is important in applications to Schr\"odinger operators,
see Section~\ref{Sec.app} below.
\end{Remark}

 
Having stated the advantages of Kato's and our own results, let us conclude this section by noting that Kato's proof of Theorem~\ref{Thm.main.similar} is very different from
our proof of Theorem~\ref{Thm.main}. In fact, he uses the method of stationary scattering theory
(and the similarity transformation $W$ he constructs has the meaning of a wave operator), while we work directly with the mentioned variants of the Birman--Schwinger principle.  

%
\subsection{Organisation of the paper}
In Section~\ref{Sec.pseudo} we introduce the operator~$H_V$
as the pseudo-Friedrichs extension of $H_0+V$. 
Sections~\ref{Sec.point}, \ref{Sec.res} and~\ref{Sec.ess}
are devoted to establishing the aforementioned variants 
of the Birman--Schwinger principle
for the point, residual and essential spectra, respectively.
In Section~\ref{Sec.proofs} we provide the proofs of
Theorem~\ref{Thm.main0} and Theorem~\ref{Thm.main}. 
Finally, in Section~\ref{Sec.app} we apply the abstract theorems
to Schr\"odinger and Dirac operators;
we recall some classical as well as recently established properties,
and prove completely new results for Schr\"odinger operators 
in three-dimensional hyperbolic space. Finally, the appendix contains a proof that Kato's extension $\tilde{H}_V$ and our pseudo-Friedrichs extension $H_V$ coincide given some suitable assumptions.

\section{The pseudo-Friedrichs extension}\label{Sec.pseudo}
%
By our standing Assumption~\ref{Ass.Ass},
$H_0$~is a self-adjoint operator in a complex separable Hilbert space~$\Hilbert$.
Recall (\cf~\cite[Sec.~VI.2.7]{Kato}) that
the absolute value $|H_0| := (H_0^2)^{1/2}$ is also self-adjoint,
$\Dom(|H_0|) = \Dom(H_0)$ is a core of $|H_0|^{1/2}$ 
and~$H_0$ and~$|H_0|$ commute
(in the sense of their resolvents).
The operator $G_0:\Dom(H_0)\to\Hilbert, G_0=|H_0|+1$ is bijective.
We define a sesquilinear form associated with~$H_0$ by
$$
  h_0(\phi,\psi) := \big(G_0^{1/2}\phi,H_0 G_0^{-1} G_0^{1/2}\psi\big) 
  \,, \qquad
  \phi,\psi \in \Dom(h_0) := \Dom(|H_0|^{1/2})
  \,.
$$
Since $H_0G_0^{-1} \in \mathscr{B}(\Hilbert)$ is selfadjoint, we see that $h_0$ is symmetric, \ie\
  $h_0(\phi,\psi)=\overline{h_0(\psi,\phi)}=:h_0^*(\phi,\psi)$ for
  $\phi,\psi \in \Dom(|H_0|^{1/2})$.  
Moreover, $h_0(\phi,\psi)=(\phi,H_0\psi)$ {and, by symmetry,
  $h_0(\psi,\phi)=(H_0\psi,\phi)$
for every $\phi \in \Dom(|H_0|^{1/2})$ and $\psi \in \Dom(H_0)$. 

Let $A: \Dom(A) \subset \Hilbert \to \Hilbert'$ 
and $B: \Dom(B) \subset \Hilbert \to \Hilbert'$ be two operators satisfying $\Dom(|H_0|^{1/2}) \subset \Dom(A) \cap \Dom(B)$ and (\ref{Ass.bounded}).
Our goal is to introduce a closed extension~$H_V$ 
of the operator sum $H_0+V$, with $V:=B^*A$,
as the \emph{pseudo-Friedrichs extension} \cite[Thm.~VI.3.11]{Kato}
(see also \cite{Veselic_2008} for more recent developments).
It is a suitable generalisation of the Friedrichs extension
in the case when~$H_0$ is not necessarily bounded from below. 

The idea is to replace~$V$ by its sesquilinear form 
$$
  v(\phi,\psi):=(B\phi,A\psi)
  \,, \qquad
  \phi,\psi \in \Dom(v) := \Dom(|H_0|^{1/2})
  \,.
$$  
Noting that, by assumption~(\ref{Ass.bounded}), 
we can rewrite $v$ as  
$$ 
  v(\phi,\psi) 
  = \big(B G_0^{-1/2} G_0^{1/2} \phi, 
  A G_0^{-1/2} G_0^{1/2} \psi\big) 
  = \big(G_0^{1/2}\phi, 
  [B G_0^{-1/2}]^*A G_0^{-1/2} G_0^{1/2} \psi\big), 
  \qquad \phi,\psi \in \Dom(|H_0|^{1/2}),
$$
we obtain that for 
\[ h_V:= h_0 +v, \quad \Dom(h_V):=\Dom(|H_0|^{1/2}),\] 
we have 
\begin{equation}
 h_V(\phi,\psi) = 
 \big( G_0^{1/2}\phi, 
 (H_0G_0^{-1}+ [B G_0^{-1/2}]^*A G_0^{-1/2}) G_0^{1/2} \psi\big), 
 \qquad \phi,\psi \in \Dom(|H_0|^{1/2}).\label{eq:8}
\end{equation}
Hence, we define
\begin{equation}\label{pseudo}
  H_V := G_0^{1/2} 
  \big(H_0 G_0^{-1}+[B G_0^{-1/2}]^*A G_0^{-1/2}\big) \, G_0^{1/2}
\end{equation}
with its natural domain. Clearly, $\Dom(H_V) \subset \Dom(|H_0|^{1/2})$ and if $\psi \in \Dom(H_0) \cap \Dom(V)$,
where $\Dom(H_0) \subset \Dom(|H_0|^{1/2})$ and
$
  \Dom(V) 
  = A^{-1} \Dom(B^*)
  = \{\psi\in\Dom(A):A\psi\in\Dom(B^*)\}
  \subset \Dom(|H_0|^{1/2})
$,
then for all $\phi$ in the dense set $\Dom(|H_0|^{1/2})$ we have
$
  (\phi,H_V\psi) 
  = (\phi,H_0\psi) + (\phi,V\psi)
$,
so $H_V\psi = (H_0+V)\psi$. This shows that $H_V \supset H_0+V$ and one has the  representation formula
\begin{equation}\label{form}
  \forall \phi \in \Dom(|H_0|^{1/2}), \ \psi \in \Dom(H_V)
  \,, \qquad
  (\phi,H_V\psi) 
  = h_V(\phi,\psi) 
  \,.
\end{equation}
Now let us verify that $H_V$ is a closed operator. We will do this by showing that $\rho(H_V)$ is non-empty. For this purpose, 
we use assumption~(\ref{Ass.spectrum}), \ie\ there exists $\lambda_0 \in \rho(H_0)$ such that $-1 \notin \sigma(K(\lambda_0))$. Using that $I \supset G_0^{1/2}G_0^{-1}G_0^{1/2}$, 
with this choice of  $\lambda_0$ we can write
\begin{align*}
  H_V - \lambda_0 = G_0^{1/2} 
  \big([H_0-\lambda_0] G_0^{-1}+[B G_0^{-1/2}]^*A G_0^{-1/2}\big)
  \, G_0^{1/2}.
\end{align*}
In particular, we obtain that
\begin{equation}
 (H_V - \lambda_0)^{-1}= G_0^{-1/2} 
 \big([H_0-\lambda_0] G_0^{-1}+[B G_0^{-1/2}]^*A G_0^{-1/2}\big)^{-1} G_0^{-1/2},\label{eq:6}
\end{equation}
provided that 
\[ [H_0-\lambda_0] G_0^{-1}+[B G_0^{-1/2}]^*A G_0^{-1/2} = [H_0-\lambda_0]G_0^{-1} \left( I + G_0(H_0-\lambda_0)^{-1} [BG_0^{-1/2}]^* A G_0^{-1/2} \right) \]
has a bounded inverse. 
But this is the case if, and only if,
$-1 \notin \sigma(G_0(H_0-\lambda_0)^{-1} [BG_0^{-1/2}]^* A G_0^{-1/2})$ which is the case (as we already argued in the proof of Lemma \ref{Lemma1} (ii)) if, and only if,  $-1 \notin \sigma(K(\lambda_0))$. So we conclude that indeed $\lambda_0 \in \rho(H_V)$ and $H_V$ is closed.

Next, let us show that 
$\Dom(H_V)=\Ran((H_V-\lambda_0)^{-1})$ is dense in $\Hilbert$. To this end, note that from (\ref{eq:6}) we obtain that 
\begin{equation}
 [(H_V - {\lambda}_0)^{-1}]^*= G_0^{-1/2} 
 \big([H_0-\overline{\lambda}_0] G_0^{-1}+[A G_0^{-1/2}]^*B G_0^{-1/2}\big)^{-1} G_0^{-1/2},\label{eq:7}
\end{equation}
where we used that $G_0^{-1/2} \in \mathscr{B}(\Hilbert)$ is self-adjoint and $[H_0-{\lambda}_0] G_0^{-1}+[B G_0^{-1/2}]^*A G_0^{-1/2}$ is invertible in $\mathscr{B}(\Hilbert)$. 
Now the operator on the right-hand side of (\ref{eq:7}) 
is clearly injective and hence  
$$
\overline{\Dom(H_V)}=\overline{\Ran(H_V-\lambda_0)^{-1}}
=\big(\Ker([(H_V-\lambda_0)^{-1}])^*\big)^\perp
=\Hilbert,
$$
so $H_V$ is densely defined. In particular, its adjoint $H_V^*$ exists and $\overline{\lambda}_0 \in \rho(H_V^*)$ with  
\begin{equation}
 (H_V^* - \overline{\lambda}_0)^{-1}
 = G_0^{-1/2} 
 \big([H_0-\overline{\lambda}_0] G_0^{-1}+[A G_0^{-1/2}]^*B G_0^{-1/2}\big)^{-1} G_0^{-1/2}.\label{eq:12}
\end{equation}
It follows that 
$\Dom(H_V^*)  \subset \Ran(G_0^{-1/2})= \Dom(|H_0|^{1/2})$ and
\begin{equation}\label{pseudo*}
  H_V^* = G_0^{1/2} 
  \big(H_0 G_0^{-1}+[A G_0^{-1/2}]^*B G_0^{-1/2}\big) \, G_0^{1/2}.
\end{equation}
Moreover, with the adjoint form 
$$
  v^*(\phi,\psi) 
  := \overline{v(\psi,\phi)} 
  = (A\phi,B\psi), \qquad
  \Dom(v^*)
  := \Dom(v)
  =\Dom(|H_0|^{1/2}),
$$
we obtain the representation formula
\begin{equation}\label{form*}
  \forall \phi \in \Dom(|H_0|^{1/2}), \ \psi \in \Dom(H_V^*)
  \,, \qquad
  (\phi,H_V^*\psi) = 
  h_V^*(\phi,\psi) 
  \,,
\end{equation}
where $h_V^* = h_0^*+v^*$.
	
Let us summarise the properties of the pseudo-Friedrichs extension
into the following theorem.

\begin{Theorem}\label{Thm.pseudo} 
Suppose Assumption~\ref{Ass.Ass} and set $V:=B^*A$.
There exists a unique closed extension~$H_V$ of $H_0 + V$  
such that $\Dom(H_V) \subset \Dom(|H_0|^{1/2})$,
$\Dom(H_V^*) \subset \Dom(|H_0|^{1/2})$ and the representation formulae~\eqref{form} and~\eqref{form*} hold. 
\end{Theorem}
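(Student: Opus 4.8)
The plan is to notice that the existence claim, together with the two domain inclusions and the representation formulae \eqref{form} and \eqref{form*}, is exactly what the construction preceding the theorem delivers: the operator $H_V$ of \eqref{pseudo} is closed because $\lambda_0\in\rho(H_V)$ (this is \eqref{eq:6}, which uses \eqref{Ass.spectrum}), it is densely defined, it extends $H_0+V$, it satisfies \eqref{form}, and its adjoint is given by \eqref{pseudo*} with $\Dom(H_V^*)\subset\Ran(G_0^{-1/2})=\Dom(|H_0|^{1/2})$ and \eqref{form*}. So the only thing left to prove is uniqueness.

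For uniqueness, I would take an arbitrary closed, densely defined extension $\tilde H\supset H_0+V$ satisfying $\Dom(\tilde H)\subset\Dom(|H_0|^{1/2})$, $\Dom(\tilde H^*)\subset\Dom(|H_0|^{1/2})$, and \eqref{form}, \eqref{form*} with $H_V,H_V^*$ replaced by $\tilde H,\tilde H^*$, and first show $\tilde H\subset H_V$. Given $\psi'\in\Dom(\tilde H)$, set $\psi:=(H_V-\lambda_0)^{-1}(\tilde H-\lambda_0)\psi'\in\Dom(H_V)$ and $\chi:=\psi-\psi'\in\Dom(|H_0|^{1/2})$. Since $(H_V-\lambda_0)\psi=(\tilde H-\lambda_0)\psi'$, subtracting the instance of \eqref{form} for $H_V$ (at $\psi$) from the one for $\tilde H$ (at $\psi'$) gives $h_V(\phi,\chi)-\lambda_0(\phi,\chi)=0$ for all $\phi\in\Dom(|H_0|^{1/2})$. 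Rewriting the left-hand side as in \eqref{eq:8}, namely as $\big(G_0^{1/2}\phi,\big([H_0-\lambda_0]G_0^{-1}+[BG_0^{-1/2}]^*AG_0^{-1/2}\big)G_0^{1/2}\chi\big)$, and using that $G_0^{1/2}$ maps $\Dom(|H_0|^{1/2})$ onto $\Hilbert$, one gets $\big([H_0-\lambda_0]G_0^{-1}+[BG_0^{-1/2}]^*AG_0^{-1/2}\big)G_0^{1/2}\chi=0$; this operator is boundedly invertible (again by \eqref{Ass.spectrum}, exactly as in \eqref{eq:6}), so $G_0^{1/2}\chi=0$, hence $\chi=0$, i.e. $\psi'=\psi\in\Dom(H_V)$ and $\tilde H\psi'=H_V\psi'$.

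The dual argument then closes the gap. From $\tilde H\subset H_V$ and density of both domains one has $H_V^*\subset\tilde H^*$; running the identical reasoning with $\overline{\lambda}_0\in\rho(H_V^*)$ (this is \eqref{eq:12}), with \eqref{form*} in place of \eqref{form} for both $\tilde H^*$ and $H_V^*$, and with the bounded invertibility of $[H_0-\overline{\lambda}_0]G_0^{-1}+[AG_0^{-1/2}]^*BG_0^{-1/2}$, yields $\tilde H^*\subset H_V^*$, hence $\tilde H^*=H_V^*$. Taking adjoints of this identity and using that $\tilde H$ and $H_V$ are closed and densely defined gives $\tilde H=\tilde H^{**}=(H_V^*)^*=H_V$.

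I expect the one genuinely delicate point to be precisely this passage to the adjoints: the easy half of the uniqueness proof only produces the inclusion $\tilde H\subset H_V$, and since these are unbounded operators injectivity of $\tilde H-\lambda_0$ is not enough to promote it to equality — one really needs the dual run, which is also the reason the conditions on $\Dom(H_V^*)$ and the representation formula \eqref{form*} appear in the statement. Everything else is routine bookkeeping with the bijectivity of $G_0^{1/2}\colon\Dom(|H_0|^{1/2})\to\Hilbert$ and with the invertibility guaranteed by \eqref{Ass.spectrum}.
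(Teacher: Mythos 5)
Your proposal is correct and follows essentially the same route as the paper: existence comes from the construction preceding the theorem, and uniqueness is obtained by using \eqref{form} together with \eqref{eq:8} to force $\tilde H\subset H_V$, then the dual argument with \eqref{form*} for the adjoints, and finally passing to adjoints to upgrade the inclusions to equality. The only (harmless) deviation is mechanical: the paper reads off directly from the weak identity that $[H_0G_0^{-1}+C]\,G_0^{1/2}\psi\in\Dom\big((G_0^{1/2})^*\big)$ and identifies $\tilde H\psi$ via \eqref{pseudo}, whereas you compare $\psi'$ with $(H_V-\lambda_0)^{-1}(\tilde H-\lambda_0)\psi'$ and invoke once more the bounded invertibility of $[H_0-\lambda_0]G_0^{-1}+[BG_0^{-1/2}]^*AG_0^{-1/2}$ coming from \eqref{Ass.spectrum}.
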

\begin{proof}
It remains to verify the uniqueness claim.
Let~$\hat{H}_V$ be another closed extension of~$H_0+V$
with the properties stated in the theorem. 
Let $\phi \in \Dom(|H_0|^{1/2})$ 
and $\psi \in \Dom(\hat{H}_V) \subset \Dom(|H_0|^{1/2})$.
Then~\eqref{form} and (\ref{eq:8}) imply that
$$
  (\phi,\hat{H}_V\psi) = h_0(\phi,\psi) + v(\phi,\psi) 
  = \big(G_0^{1/2}\phi,H_0 G_0^{-1} G_0^{1/2}\psi\big)  
  + \big(G_0^{1/2}\phi,CG_0^{1/2}\psi\big)
  \,,
$$
where $C=[BG_0^{-1/2}]^*AG_0^{-1/2}$. But this implies that 
$[H_0G_0^{-1}+C]G_0^{1/2}\psi \in \Dom(G_0^{1/2})=\Dom((G_0^{1/2})^*)$  and 
$$
  \hat{H}_V\psi = G_0^{1/2} (H_0 G_0^{-1}+C) G_0^{1/2}\psi
  \,.
$$   
By~\eqref{pseudo}, it follows that $\psi \in \Dom(H_V)$
and $\hat{H}_V\psi=H_V\psi$. 
This shows that $\hat{H}_V \subset H_V$.

Now, let $\phi \in \Dom(|H_0|^{1/2})$ 
and $\psi \in \Dom((\hat{H}_V)^*) \subset \Dom(|H_0|^{1/2})$.
Then~\eqref{form*} implies 
$$
\begin{aligned}
  (\phi,(\hat{H}_V)^*\psi) 
  &= h_0^*(\phi,\psi) + v^*(\phi,\psi) = \overline{h_0(\psi,\phi)} + \overline{(B\psi, A\phi)} \\
  &= \big(H_0 G_0^{-1}G_0^{1/2}\phi,G_0^{1/2}\psi\big)  
  + \big(CG_0^{1/2}\phi,G_0^{1/2}\psi\big)
  \\
  &= \big(G_0^{1/2}\phi,H_0 G_0^{-1}G_0^{1/2}\psi\big)  
  + \big(G_0^{1/2}\phi,C^*G_0^{1/2}\psi\big)
  \,,
\end{aligned}
$$
where the second equality employs the commutativity of~$H_0$ and~$G_0$. Arguing as above, this implies that 
$$
  (\hat{H}_V)^*\psi = G_0^{1/2} (H_0 G_0^{-1}+C^*) G_0^{1/2}\psi
$$ 
and hence by \eqref{pseudo*} it follows that $\psi \in \Dom(H_V^*)$
and $(\hat{H}_V)^*\psi=H_V^*\psi$. This shows that $(\hat{H}_V)^* \subset H_V^*$,
so $\hat{H}_V \supset H_V$.
\end{proof}

We conclude this section about the pseudo-Friedrichs extension with the following generalised version of the second resolvent identity.
\begin{Proposition}\label{Prop.2nd}
For all $z \in \rho(H_0) \cap \rho(H_V)$,
\begin{equation}\label{2nd.pseudo}
  (H_V-z)^{-1} - (H_0-z)^{-1} 
  = - [B(H_0-\bar{z})^{-1}]^* A (H_V-z)^{-1} 
  \,.
\end{equation}
\end{Proposition}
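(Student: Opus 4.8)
The plan is to verify the identity by manipulating the explicit resolvent formula \eqref{eq:6} (valid at $\lambda_0$, but the same computation works at any $z \in \rho(H_0) \cap \rho(H_V)$). First I would rewrite \eqref{eq:6}, using the factorisation $[H_0-z] G_0^{-1} + [BG_0^{-1/2}]^* A G_0^{-1/2} = [H_0-z]G_0^{-1}\bigl(I + G_0(H_0-z)^{-1}[BG_0^{-1/2}]^* A G_0^{-1/2}\bigr)$ that already appears in the construction of $H_V$, to obtain
\[
(H_V-z)^{-1} = G_0^{-1/2}\bigl(I + G_0(H_0-z)^{-1}[BG_0^{-1/2}]^* A G_0^{-1/2}\bigr)^{-1} G_0(H_0-z)^{-1}\, G_0^{-1/2}.
\]
Then I would apply the elementary algebraic identity $(I+T)^{-1} = I - (I+T)^{-1}T$ with $T := G_0(H_0-z)^{-1}[BG_0^{-1/2}]^* A G_0^{-1/2}$, so that the leading term reproduces $G_0^{-1/2}G_0(H_0-z)^{-1}G_0^{-1/2} = (H_0-z)^{-1}$ and the remainder is
\[
-\,G_0^{-1/2}(I+T)^{-1}\,G_0(H_0-z)^{-1}[BG_0^{-1/2}]^* A G_0^{-1/2}\,G_0(H_0-z)^{-1}\,G_0^{-1/2}.
\]
Recognising the first three factors after the sign as $(H_V-z)^{-1}$ composed appropriately, and the trailing $A G_0^{-1/2} G_0 (H_0-z)^{-1} G_0^{-1/2}$ as $A(H_0-z)^{-1}$ on the relevant domain, this collapses to $-[B(H_0-\bar z)^{-1}]^* A (H_V-z)^{-1}$, which is the claim; here one uses that $[BG_0^{-1/2}]^*[G_0(H_0-z)^{-1}] = [B(H_0-\bar z)^{-1}]^*$ since $G_0$ and $(H_0-z)^{-1}$ commute and are, up to conjugation, bounded functions of the self-adjoint operator $H_0$.

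An alternative, perhaps cleaner route is to apply $(H_0-z)$ on the left and $(H_V-z)$ on the right to the desired identity \eqref{2nd.pseudo} and check the resulting bounded-operator identity directly using the form representation \eqref{form}: for $\psi \in \Dom(H_V)$ and $\phi \in \Dom(|H_0|^{1/2})$ one has $(\phi, (H_V-z)\psi) = h_0(\phi,\psi) - z(\phi,\psi) + (B\phi, A\psi)$, and since $(H_0-z)^{-1}$ maps into $\Dom(H_0)$ one can pair with $\phi = (H_0-\bar z)^{-1}\chi$ to extract exactly the term $[B(H_0-\bar z)^{-1}]^* A\psi$. This avoids inverting $I+T$ explicitly.

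The main obstacle is domain bookkeeping rather than algebra: the unbounded operators $A$, $B^*$, $H_0$, $G_0^{1/2}$ only make sense on restricted domains, so every cancellation must be justified by checking that the relevant vectors lie in the appropriate domains (in particular that $(H_V-z)^{-1}$ maps $\Hilbert$ into $\Dom(|H_0|^{1/2}) \subset \Dom(A)$, which is part of Theorem~\ref{Thm.pseudo}, and that $A(H_V-z)^{-1} = [AG_0^{-1/2}][G_0^{1/2}(H_V-z)^{-1}]$ is genuinely bounded so that $[B(H_0-\bar z)^{-1}]^* A(H_V-z)^{-1}$ is a well-defined bounded operator). I would handle this by working throughout with the bounded "dressed" operators $AG_0^{-1/2}$, $BG_0^{-1/2}$ and the bounded function $G_0(H_0-z)^{-1}$ of $H_0$, only peeling off the $G_0^{\pm 1/2}$ factors at the very end, exactly as in the derivation of \eqref{eq:6}. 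The density of $\Dom(|H_0|^{1/2})$ then lets one promote form identities to operator identities.
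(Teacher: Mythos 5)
Your second, ``alternative'' route is precisely the proof the paper gives: with $\phi=(H_0-\bar z)^{-1}f$ and $\psi=(H_V-z)^{-1}g$ one computes $\big(f,[(H_V-z)^{-1}-(H_0-z)^{-1}]g\big)$ through $h_0$, $h_V$ and the representation formula \eqref{form}, and the surviving term $(B\phi,A\psi)=(f,[B(H_0-\bar z)^{-1}]^*A(H_V-z)^{-1}g)$ yields \eqref{2nd.pseudo}; the only inputs are $\Dom(H_V)\subset\Dom(|H_0|^{1/2})$ and the boundedness of $B(H_0-\bar z)^{-1}$, so that sketch is complete and requires no inversion of $I+T$ at all.

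Your primary route, however, has a genuine gap at its first step. Formula \eqref{eq:6} is obtained in Section~\ref{Sec.pseudo} only because the middle factor is \emph{boundedly} invertible, which is equivalent to $-1\notin\sigma(K(\lambda_0))$, i.e.\ to assumption \eqref{Ass.spectrum} at the special point $\lambda_0$. At a general $z\in\rho(H_0)\cap\rho(H_V)$ this is not available: $z\in\rho(H_V)$ only gives that $M_z:=[H_0-z]G_0^{-1}+[BG_0^{-1/2}]^*AG_0^{-1/2}$ is injective with $\Ran(M_z)\supset\Dom(G_0^{1/2})$, so $(H_V-z)^{-1}=G_0^{-1/2}M_z^{-1}G_0^{-1/2}$ holds only with $M_z^{-1}$ an a priori \emph{unbounded} algebraic inverse. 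The claim that $z\in\rho(H_0)\cap\rho(H_V)$ forces $-1\notin\sigma(K(z))$ is a full-spectrum Birman--Schwinger assertion that the paper never establishes (only the point, residual and partial essential-spectrum versions are proved), and it does not follow from Assumption~\ref{Ass.Ass} alone; it would follow from \eqref{Ass.small1}, but the Proposition is stated, and later used in the proof of Theorem~\ref{Thm.main0}, without any smallness. Hence ``the same computation works at any $z$'' is exactly the unproved step, and with an unbounded $M_z^{-1}$ the manipulation $(I+T)^{-1}=I-(I+T)^{-1}T$ is no longer bounded-operator algebra but needs domain justification at every stage. There is also a smaller slip: with your factorisation the remainder collapses naturally to an expression of the form ``(extension of $(H_V-z)^{-1}B^*$) composed with $A(H_0-z)^{-1}$'', i.e.\ the transposed version of the identity, with the perturbed resolvent adjacent to the $B$-factor and the free resolvent adjacent to $A$; to land on \eqref{2nd.pseudo} directly one must instead factor $M_z=\bigl(I+[BG_0^{-1/2}]^*AG_0^{-1/2}\,G_0(H_0-z)^{-1}\bigr)[H_0-z]G_0^{-1}$ and peel on the other side. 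I recommend discarding the first route and keeping the second, which is the paper's argument.
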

\begin{proof}
Given any $f,g \in \Hilbert$, 
set $\phi:=(H_0-\bar{z})^{-1}f$ and $\psi:=(H_V-z)^{-1}g$.
Then
$$
\begin{aligned}
  \big(f,[(H_V-z)^{-1} - (H_0-z)^{-1}]g\big) 
  &= \big((H_0-\bar{z})\phi,\psi\big) -\big(\phi,(H_V-z)\psi\big) 
  \\
  &= (H_0\phi,\psi) - (\phi,H_V\psi)
  \\
  &= h_0(\phi,\psi) - h_V(\phi,\psi)
  \\
  &= (B\phi,A\psi)
  \\
  &= (B(H_0-\bar{z})^{-1}f,A(H_V-z)^{-1}g)
  \\
  &= (f,[B(H_0-\bar{z})^{-1}]^*A(H_V-z)^{-1}g)
  \,,
\end{aligned}
$$
where the third equality holds because both $\phi,\psi \in \Dom(|H_0|^{1/2})$.
\end{proof}
%

\section{The point spectrum}\label{Sec.point}
%
This section deals with the point spectrum of $H_V$. As a byproduct of the following two theorems, we obtain a proof of Theorem \ref{Thm.evs0}. For instance, the next theorem establishes the implication $\Longrightarrow$ 
of Theorem~\ref{Thm.evs0}. 
\begin{Theorem}\label{Thm.evs1} 
Suppose Assumption~\ref{Ass.Ass}.
Let $H_V\psi=\lambda\psi$ with some 
$\lambda \in \Com\setminus\sigma(H_0)$ 
and $\psi\in\Dom(H_V) \setminus \{0\}$. 
Then $g := A \psi \not= 0$ and $K_{\lambda} g = - g$.
\end{Theorem}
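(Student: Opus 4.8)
The plan is to unwind definition~\eqref{pseudo} of $H_V$ and turn the eigenvalue equation into an identity in $\Hilbert$ to which the three bounded factors composing $K_\lambda$ can be applied in turn.

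First I would introduce $\chi := G_0^{1/2}\psi \in \Hilbert$, which is legitimate since $\Dom(H_V) \subset \Dom(|H_0|^{1/2}) = \Dom(G_0^{1/2})$. Writing $C := [BG_0^{-1/2}]^* A G_0^{-1/2} \in \mathscr{B}(\Hilbert)$ and using $I \supset G_0^{1/2} G_0^{-1} G_0^{1/2}$ on $\Dom(|H_0|^{1/2})$, the equation $(H_V-\lambda)\psi = 0$ rewrites, via~\eqref{pseudo}, as $G_0^{1/2}\big((H_0-\lambda)G_0^{-1} + C\big)\chi = 0$; since $G_0^{1/2}$ is injective this gives $(H_0-\lambda)G_0^{-1}\chi = -C\chi$. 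Here I would note that $C\chi = [BG_0^{-1/2}]^* g$ with $g = AG_0^{-1/2}\chi = A\psi$, using that $G_0^{-1/2}G_0^{1/2}\psi = \psi$ (as $\psi \in \Dom(G_0^{1/2})$) and that $\psi \in \Dom(A)$ by Assumption~\ref{Ass.Ass}.

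Next I would apply $G_0(H_0-\lambda)^{-1}$, which belongs to $\mathscr{B}(\Hilbert)$ by the functional calculus (this is implicit already in the well-definedness of $K_\lambda$ in~\eqref{Birman_Schwinger1}), obtaining $\chi = -\,G_0(H_0-\lambda)^{-1}[BG_0^{-1/2}]^* g$. Applying to this the bounded operator $[AG_0^{-1/2}] = AG_0^{-1/2} \in \mathscr{B}(\Hilbert,\Hilbert')$ --- everywhere defined because $\Ran(G_0^{-1/2}) = \Dom(|H_0|^{1/2}) \subset \Dom(A)$ and bounded by~\eqref{Ass.bounded} --- the left-hand side equals $AG_0^{-1/2}\chi = g$, while the right-hand side is precisely $-[AG_0^{-1/2}]\big[G_0(H_0-\lambda)^{-1}\big][BG_0^{-1/2}]^* g = -K_\lambda g$. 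Hence $K_\lambda g = -g$.

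It then only remains to rule out $g = 0$: if $g = 0$, the displayed formula $\chi = -\,G_0(H_0-\lambda)^{-1}[BG_0^{-1/2}]^* g$ forces $\chi = 0$, hence $\psi = G_0^{-1/2}\chi = 0$, contradicting $\psi \neq 0$. The only real care needed anywhere is the domain bookkeeping in the successive applications of operators --- in particular checking that each factor genuinely is the everywhere-defined bounded operator appearing in~\eqref{Birman_Schwinger1}, so that the composition is literally $K_\lambda$; there is no analytic difficulty beyond that.
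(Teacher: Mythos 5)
Your proof is correct: the rewriting $(H_V-\lambda)\psi = G_0^{1/2}\big[(H_0-\lambda)G_0^{-1}+C\big]G_0^{1/2}\psi$ via \eqref{pseudo}, the injectivity of $G_0^{1/2}$, and the successive application of the bounded factors $G_0(H_0-\lambda)^{-1}$ and $AG_0^{-1/2}$ are all legitimate, and the domain bookkeeping you flag (that $G_0^{-1}\chi\in\Dom(H_0)$, $G_0^{-1/2}\chi=\psi$, $\psi\in\Dom(A)$) does go through. It is, however, a genuinely different route from the paper's. The paper never inverts the factorisation; it works at the level of sesquilinear forms, computing $(\phi,K_\lambda g)$ for an arbitrary test vector $\phi\in\Hilbert$ by means of the auxiliary vector $\eta:=G_0^{-1/2}[G_0(H_0-\lambda)^{-1}]^*[AG_0^{-1/2}]^*\phi$ together with the representation formula \eqref{form}, and it proves $g\neq 0$ separately by showing that $g=0$ would force $\psi\in\Dom(H_0)$ with $H_0\psi=\lambda\psi$, contradicting $\lambda\notin\sigma_\mathrm{p}(H_0)$. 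Your operator-algebraic argument is shorter and arguably more transparent for $\lambda\notin\sigma(H_0)$, and it yields $g\neq 0$ for free from the derived identity $G_0^{1/2}\psi=-G_0(H_0-\lambda)^{-1}[BG_0^{-1/2}]^*g$. What the paper's weak formulation buys is flexibility: essentially the same computation, with $\lambda$ replaced by $\lambda+i\eps$ and a limiting argument, handles eigenvalues embedded in $\sigma_\mathrm{c}(H_0)$ (Theorem~\ref{Thm.evs}) and singular sequences (Theorem~\ref{Thm.cont}), situations in which $(H_0-\lambda)^{-1}$ is unavailable and your inversion step breaks down; likewise the paper's proof of $g\neq 0$ uses only $\lambda\notin\sigma_\mathrm{p}(H_0)$ and is reused verbatim in those extensions.
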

\begin{proof}
Suppose that $g=A\psi=0$. Then for every $f \in \Dom(H_0)$ we have
\[ (H_0f, \psi) = h_0(f,\psi) = h_V(f,\psi) - (Bf, A\psi) = h_V(f,\psi) = (f, H_V \psi) = (f, \lambda \psi).\]
This shows that $\psi \in \Dom(H_0^*)=\Dom(H_0)$ and $H_0 \psi =
H_0^*\psi=\lambda \psi$, so $\lambda \in \sigma_{\mathrm p}(H_0)$, a
contradiction. Hence $g \neq 0$.
 
Now for every $\phi \in \Hilbert$, one has
\begin{equation}\label{ev1}
\begin{aligned}
  (\phi,K_\lambda g)
  &= \big([AG_0^{-1/2}]^*\phi,
  [G_0(H_{0}-\lambda)^{-1}] [BG_0^{-1/2}]^*g\big)
  \\
  &= \big( [B G_0^{-1/2}]
  [G_0(H_{0}-\lambda)^{-1}]^* [AG_0^{-1/2}]^*  \phi,
  A\psi\big)
  \\
  &= (B\eta,A\psi)
  = v(\eta,\psi)
\end{aligned}
\end{equation}
with
$
  \eta := G_0^{-1/2}[G_0(H_{0}-\lambda)^{-1}]^* [AG_0^{-1/2}]^*\phi
  \in \Dom(|H_0|^{1/2})
  \,.
$
Using~\eqref{form}, it follows that
\begin{equation}\label{ev2}
\begin{aligned}
  (\phi,K_\lambda g) 
  &= (\eta,H_V\psi)	 
  - h_0(\eta,\psi) 
  \\
  &= \lambda \, (\eta,\psi) 
  - h_0(\eta,\psi) 
  \\
  &= \lambda \, \big(G_0^{1/2}\eta,G_0^{-1}G_0^{1/2}\psi\big)
  - \big(G_0^{1/2}\eta,H_0 G_0^{-1} G_0^{1/2}\psi\big)
  \\
  &= - \big(G_0^{1/2}\eta,(H_0-\lambda) G_0^{-1} G_0^{1/2}\psi\big)
  \\
  &= - \big([AG_0^{-1/2}]^*\phi,
  G_0(H_{0}-\lambda)^{-1}(H_0-\lambda) G_0^{-1} G_0^{1/2}\psi\big)
  \\
  &= - \big([AG_0^{-1/2}]^*\phi,
  G_0^{1/2}\psi\big)
  \\ 
  &= - \big(\phi,A\psi\big)
  = - (\phi,g)
  \,.
\end{aligned}
\end{equation}
Since this is true for every $\phi \in \Hilbert$,
it follows that $K_\lambda g=-g$.
\end{proof}

The following theorem establishes the opposite implication $\Longleftarrow$ 
of Theorem~\ref{Thm.evs0}. 
\begin{Theorem}\label{Thm.evs2} 
Suppose Assumption~\ref{Ass.Ass}.
Let $K_\lambda g=-g$ with some 
$\lambda \in \Com\setminus\sigma(H_0)$ 
and $g\in\Hilbert \setminus \{0\}$. 
Then $\psi := G_0^{1/2}(H_0-\lambda)^{-1} [BG_0^{-1/2}]^* g \in \Dom(H_V)$, 
$\psi\not=0$ and $H_V \psi = \lambda\psi$.
\end{Theorem}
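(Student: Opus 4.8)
The plan is to run the computation from the proof of Theorem~\ref{Thm.evs1} backwards. Write $w := [BG_0^{-1/2}]^*g \in \Hilbert$ (the bounded operator $[BG_0^{-1/2}]^*$ maps $\Hilbert'$ into $\Hilbert$), so that $\psi = G_0^{1/2}(H_0-\lambda)^{-1}w$ and the hypothesis reads $[AG_0^{-1/2}]\,[G_0(H_0-\lambda)^{-1}]\,w = -g$. The non-triviality $\psi \neq 0$ is immediate: $(H_0-\lambda)^{-1}$ is injective and so is $G_0^{1/2}$, so $\psi = 0$ would force $w = 0$, whence $K_\lambda g = [AG_0^{-1/2}][G_0(H_0-\lambda)^{-1}]w = 0$, contradicting $K_\lambda g = -g \neq 0$.

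Next I would locate $\psi$ and identify $G_0^{1/2}\psi$. Since $y := (H_0-\lambda)^{-1}w \in \Dom(H_0) = \Dom(G_0)$, the functional calculus (together with the commutativity of $H_0$ and $|H_0|$) gives $G_0^{1/2}y = G_0^{-1/2}G_0 y$; hence $\psi = G_0^{-1/2}\big(G_0(H_0-\lambda)^{-1}w\big) \in \Ran(G_0^{-1/2}) = \Dom(|H_0|^{1/2})$ and $G_0^{1/2}\psi = G_0(H_0-\lambda)^{-1}w$. Applying the bounded operator $AG_0^{-1/2}$ to the latter and using $G_0^{-1/2}G_0 y = G_0^{1/2}y = \psi$, we obtain $A\psi = [AG_0^{-1/2}]G_0^{1/2}\psi = [AG_0^{-1/2}][G_0(H_0-\lambda)^{-1}]w = K_\lambda g = -g$. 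So $A\psi = -g$, which also recovers the consistency with Theorem~\ref{Thm.evs1}.

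Finally I would verify $\psi \in \Dom(H_V)$ and $H_V\psi = \lambda\psi$ by computing $T\,G_0^{1/2}\psi$, where $T := H_0G_0^{-1} + [BG_0^{-1/2}]^*AG_0^{-1/2} \in \mathscr{B}(\Hilbert)$ is the bounded middle factor in the definition $H_V = G_0^{1/2}\,T\,G_0^{1/2}$ of~\eqref{pseudo}. From $G_0^{-1}G_0 y = y$ and $H_0(H_0-\lambda)^{-1}w = w + \lambda(H_0-\lambda)^{-1}w$ we get $H_0G_0^{-1}\cdot G_0^{1/2}\psi = w + \lambda(H_0-\lambda)^{-1}w$; and from $AG_0^{-1/2}\cdot G_0^{1/2}\psi = A\psi = -g$ we get $[BG_0^{-1/2}]^*AG_0^{-1/2}\cdot G_0^{1/2}\psi = [BG_0^{-1/2}]^*(-g) = -w$. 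Adding, $T\,G_0^{1/2}\psi = \lambda(H_0-\lambda)^{-1}w \in \Dom(H_0) \subset \Dom(|H_0|^{1/2}) = \Dom(G_0^{1/2})$. By the natural-domain description of $H_V$ this is precisely the statement $\psi \in \Dom(H_V)$, and then $H_V\psi = G_0^{1/2}\cdot\lambda(H_0-\lambda)^{-1}w = \lambda\,G_0^{1/2}(H_0-\lambda)^{-1}w = \lambda\psi$.

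I do not expect a genuine obstacle. The only care needed is the bookkeeping of the unbounded factors, \ie\ the repeated use of $G_0^{-1/2}G_0 = G_0^{1/2}$ and $G_0^{-1}G_0 = I$ on $\Dom(G_0) = \Dom(H_0)$ (valid by the functional calculus and the commutativity of $H_0$ and $|H_0|$), together with the fact --- guaranteed by Assumption~\eqref{Ass.bounded} --- that $AG_0^{-1/2}$ and $BG_0^{-1/2}$ are everywhere-defined bounded operators, which is what legitimises the regroupings above. The hypothesis $K_\lambda g = -g$ enters exactly once, in the form $A\psi = -g$.
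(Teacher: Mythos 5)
Your proposal is correct; every regrouping is legitimate under Assumption~\eqref{Ass.bounded} and the functional calculus for $G_0$, the hypothesis $K_\lambda g=-g$ is used exactly where it must be, and the conclusion $\psi\in\Dom(H_V)$, $H_V\psi=\lambda\psi$ follows from the natural-domain reading of~\eqref{pseudo}. The route differs from the paper's in one structural point: the paper argues at the level of the sesquilinear form, showing first that $h_V(\phi,\psi)=\lambda(\phi,\psi)$ for all $\phi\in\Dom(|H_0|^{1/2})$ (using the same resolvent identity $H_0(H_0-\lambda)^{-1}=I+\lambda(H_0-\lambda)^{-1}$ and the same cancellation coming from $K_\lambda g=-g$), and then deduces from the representation~\eqref{eq:8} and the self-adjointness of $G_0^{1/2}$ that $\big(H_0G_0^{-1}+[BG_0^{-1/2}]^*AG_0^{-1/2}\big)G_0^{1/2}\psi$ lies in $\Dom(G_0^{1/2})$, \ie\ a weak/duality argument yields the domain membership. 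You instead compute the vector $T\,G_0^{1/2}\psi$ explicitly and find it equals $\lambda(H_0-\lambda)^{-1}w\in\Dom(H_0)\subset\Dom(G_0^{1/2})$, so the domain statement is verified directly, with the extra dividend that $A\psi=-g$ appears explicitly (matching Theorem~\ref{Thm.evs1}); what the paper's form-level computation buys is uniformity with the arguments used for the other Birman--Schwinger variants (Theorems~\ref{Thm.evs} and~\ref{Thm.cont}), where only the weak formulation is available. Your only nonessential deviation is the proof of $\psi\neq0$ via injectivity of $G_0^{1/2}(H_0-\lambda)^{-1}$, where the paper applies $AG_0^{-1/2}$ to $G_0^{1/2}\psi$ and reads off $-g$; both are fine.
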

\begin{proof}
Since $\psi \in \Dom(|H_0|^{1/2})$ we see that if $\psi=0$, then 
$
  0 = AG_0^{-1/2}G_0^{1/2}\psi = K_\lambda g = -g
$,
leading to a contradiction. Hence $\psi \neq 0$. Now for every $\phi \in \Dom(|H_0|^{1/2})$
$$
\begin{aligned}
  h_V(\phi,\psi) &= h_0(\phi,\psi) + v(\phi,\psi)= (G_0^{1/2}\phi, H_0G_0^{-1}G_0^{1/2} \psi) + (B\phi,A\psi) \\ 
  &= \big(G_0^{1/2}\phi,H_0(H_0-\lambda)^{-1}[BG_0^{-1/2}]^*g\big)
  + \big(B\phi,AG_0^{1/2}(H_0-\lambda)^{-1} [BG_0^{-1/2}]^* g\big)
  \\
  &= \big(G_0^{1/2}\phi,[BG_0^{-1/2}]^*g\big)
  + \lambda \big(G_0^{1/2}\phi,(H_0-\lambda)^{-1}[BG_0^{-1/2}]^*g\big)
  + (B\phi,K_\lambda g)
  \\
  &= \lambda \, (\phi,\psi).
\end{aligned}
$$
At the same time, by (\ref{eq:8})
$$ 
  h_V(\phi,\psi) = \big(G_0^{1/2}\phi, 
  (H_0G_0^{-1}+[BG_0^{-1/2}]^*AG_0^{-1/2}) G_0^{1/2}\psi\big)
$$ 
for every $\phi \in \Dom(|H_0|^{1/2})$. But this implies that 
$$ 
\big(H_0G_0^{-1}+[BG_0^{-1/2}]^*AG_0^{-1/2}\big) \, G_0^{1/2}\psi 
\in D(G_0^{1/2}),
$$
hence by \eqref{pseudo} we obtain that $\psi \in \Dom(H_V)$ and
$$ H_V \psi = G_0^{1/2} [H_0G_0^{-1}+[BG_0^{-1/2}]^*AG_0^{-1/2}] G_0^{1/2}\psi = \lambda \psi.$$
\end{proof}

We continue with a theorem extending the implication $\Longrightarrow$ 
of Theorem~\ref{Thm.evs0} to suitable points 
$\lambda \in \sigma(H_0)$.  
\begin{Theorem}\label{Thm.evs} 
Suppose Assumption~\ref{Ass.Ass}.
Let $H_V\psi=\lambda\psi$ with some $\lambda \in \sigma_{\mathrm{c}}(H_0)$ 
and $\psi\in\Dom(H_V) \setminus \{0\}$.
Then $g := A \psi \not=0$ and  
$
\displaystyle
  K_{\lambda + i\eps} \, g
  \xrightarrow[\eps \to 0^\pm]{w} - g
$. 
\end{Theorem}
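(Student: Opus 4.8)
The plan is to mimic the proof of Theorem~\ref{Thm.evs1} as closely as possible, but since $\lambda \in \sigma_{\mathrm c}(H_0)$ lies in the spectrum, the resolvent $(H_0-\lambda)^{-1}$ no longer exists and we must work with $(H_0 - \lambda - i\eps)^{-1}$ for $\eps \neq 0$ and pass to the limit $\eps \to 0^\pm$. First I would deal with $g := A\psi \neq 0$: the argument from the beginning of the proof of Theorem~\ref{Thm.evs1} goes through verbatim, because it only uses the form identity $h_0(f,\psi) = h_V(f,\psi) - (Bf,A\psi)$ and the eigenvalue equation $H_V\psi = \lambda\psi$; if $g = 0$ it would force $\psi \in \Dom(H_0)$ and $H_0\psi = \lambda\psi$, contradicting $\lambda \in \sigma_{\mathrm c}(H_0) \subset \sigma(H_0) \setminus \sigma_{\mathrm p}(H_0)$.

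Next, fix $\phi \in \Hilbert$ and, exactly as in~\eqref{ev1}, rewrite $(\phi, K_{\lambda+i\eps} g)$ as $v(\eta_\eps, \psi) = (B\eta_\eps, A\psi)$ with $\eta_\eps := G_0^{-1/2}[G_0(H_0 - \lambda - i\eps)^{-1}]^* [AG_0^{-1/2}]^*\phi \in \Dom(|H_0|^{1/2})$. Then, repeating the chain of equalities in~\eqref{ev2} with $\lambda$ replaced by $\lambda + i\eps$ throughout, I obtain
\begin{equation*}
  (\phi, K_{\lambda+i\eps} g)
  = -(\phi, g) - i\eps\,\big((H_0 - \lambda - i\eps)^{-1}[AG_0^{-1/2}]^*\phi,\ G_0^{1/2}\psi\big),
\end{equation*}
the extra term arising because $(H_0 - \lambda) G_0^{-1} = (H_0 - \lambda - i\eps)G_0^{-1} + i\eps G_0^{-1}$, so that the cancellation $G_0(H_0-\lambda-i\eps)^{-1}(H_0-\lambda-i\eps) = I$ leaves behind precisely this $i\eps$-remainder. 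Equivalently, writing $u := [AG_0^{-1/2}]^*\phi$ and $w := G_0^{1/2}\psi$, the remainder is $-i\eps\,\big((H_0-\lambda-i\eps)^{-1} u, w\big)$, which I must show tends to $0$ as $\eps \to 0^\pm$ for each fixed $\phi$ (equivalently each fixed $u$) and each $\psi$.

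The main obstacle is this last limit: $\|i\eps (H_0 - \lambda - i\eps)^{-1}\| \le 1$ only, so the term does not vanish in operator norm, and in general $\eps (H_0-\lambda-i\eps)^{-1} u$ need not go to zero strongly (the spectral measure of $H_0$ at $u$ could have an atom at $\lambda$ — but that is excluded since $\lambda \notin \sigma_{\mathrm p}(H_0)$). The resolution is the standard fact from spectral theory: for a self-adjoint $H_0$ and any $u \in \Hilbert$,
\begin{equation*}
  \big\| i\eps (H_0 - \lambda - i\eps)^{-1} u \big\|^2
  = \int_{\Real} \frac{\eps^2}{(t-\lambda)^2 + \eps^2}\, \der\mu_u(t)
  \xrightarrow[\ \eps \to 0^\pm\ ]{} \mu_u(\{\lambda\}),
\end{equation*}
by dominated convergence, where $\mu_u$ is the spectral measure of $H_0$ associated with $u$; and $\mu_u(\{\lambda\}) = \|P_{\{\lambda\}} u\|^2 = 0$ because $\lambda$ is not an eigenvalue of $H_0$. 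Hence $i\eps(H_0-\lambda-i\eps)^{-1} u \to 0$ strongly, so a fortiori $\big(i\eps(H_0-\lambda-i\eps)^{-1}u, w\big) \to 0$, and therefore $(\phi, K_{\lambda+i\eps} g) \to -(\phi, g)$ for every $\phi \in \Hilbert$. Since $\{K_{\lambda+i\eps} g\}$ is bounded (by Assumption~\eqref{Ass.uniform} if one wishes, though here one only needs local boundedness near $\lambda$, which follows because $\lambda \pm i\eps$ stays in $\rho(H_0)$ and $K$ is continuous there — actually boundedness of $K_{\lambda+i\eps} g$ is automatic from the displayed identity once the remainder is bounded), weak convergence $K_{\lambda+i\eps} g \xrightarrow{w} -g$ follows from testing against the dense — indeed full — set of $\phi \in \Hilbert$. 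This completes the proof; I would remark that the two signs $\eps \to 0^+$ and $\eps \to 0^-$ are handled identically since the spectral-integral estimate is insensitive to the sign of $\eps$.
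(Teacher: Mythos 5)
Your proposal is correct and follows essentially the same route as the paper: the same $g\neq 0$ argument, the same decomposition $(\phi,K_{\lambda+i\eps}g)=-(\phi,g)$ plus an $i\eps$-remainder, and the remainder killed via the spectral theorem and dominated convergence using $E_0(\{\lambda\})=0$ (the paper applies dominated convergence directly to the complex spectral measure $\der\big([AG_0^{-1/2}]^*\phi,E_0(r)G_0^{1/2}\psi\big)$, while you deduce strong convergence $\eps(H_0-\lambda-i\eps)^{-1}u\to 0$ and then use Cauchy--Schwarz, which is the same idea). The only nitpick is that a careful repetition of \eqref{ev1}--\eqref{ev2} puts the resolvent on the $G_0^{1/2}\psi$ side, i.e.\ the remainder is $-i\eps\,(u,(H_0-\lambda-i\eps)^{-1}w)$ rather than $-i\eps\,((H_0-\lambda-i\eps)^{-1}u,w)$, but this placement is immaterial since your limiting argument applies verbatim to either form and to both signs of $\eps$.
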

\begin{proof}
As in the proof of Theorem~\ref{Thm.evs1} we see that $g \neq 0$.

Now we note that $\lambda$ is real and so $\lambda+i\eps \not\in \sigma(H_0)$
for all $\eps \in \Real \setminus \{0\}$. As in the proof of Theorem~\ref{Thm.evs1}, 
for every $\phi \in \Hilbert$, we have 
$$
\begin{aligned}
  (\phi,K_{\lambda+i\eps}g)
  &= - \big([AG_0^{-1/2}]^*\phi,
  G_0(H_{0}-\lambda-i\eps)^{-1}(H_0-\lambda) G_0^{-1} G_0^{1/2}\psi\big)
  \\
  &= - (\phi,g)
  - i I(\eps)
  \,,
\end{aligned}
$$
where
\begin{equation}\label{observation}
  I(\eps) := \eps \, \big([AG_0^{-1/2}]^*\phi,
  G_0(H_{0}-\lambda-i\eps)^{-1} G_0^{-1} G_0^{1/2}\psi\big)
  \,.
\end{equation}
It remains to show that $I(\eps)$ vanishes as $\eps \to 0$.
Using the spectral theorem, we have 
$$
\begin{aligned}
  I(\eps) &= \int_{\sigma(H_0)} 
  f(\eps)
  \ \der \big([AG_0^{-1/2}]^*\phi,E_0(r)G_0^{1/2}\psi\big)
\end{aligned} 
\qquad \mbox{with} \qquad
  f(\eps) := \frac{\eps}{r-\lambda-i\eps}
  \,,
$$
where~$E_0$ denotes the spectral measure of~$H_0$.
First, one has
$$
  f(\eps) \xrightarrow[\eps \to 0]{}
  \begin{cases}
    0 & \mbox{if} \quad r \not=\lambda \,,
    \\
    i & \mbox{if} \quad r =\lambda \,.
  \end{cases}
$$
In any case, however, $E_0(\{\lambda\})=0$
because $\lambda \not\in \sigma_\mathrm{p}(H_0)$.
Hence, $f(\eps) \to 0$ as $\eps \to 0$ 
almost everywhere with respect to the spectral measure. 
Second, neglecting the real part of $r-\lambda-i\eps$, 
one has 
$$
  |f(\eps)| \leq 
  \begin{cases}
    1 
    & \mbox{if} \quad \Im\lambda=0 \,,
    \\
    \displaystyle
    \frac{|\eps|}{|\Im\lambda+\eps|} \leq 1
    & \mbox{if} \quad \Im\lambda\not=0 \,,
  \end{cases}
$$
where the last inequality holds for all~$\eps$ 
with sufficiently small~$|\eps|$.
Hence $|f(\eps)|$ is bounded by an $\eps$-independent constant and 
$$
  \int_{\sigma(H_0)} 
  \der \big|
  \big([AG_0^{-1/2}]^*\varphi,E_0(r)G_0^{1/2}\psi\big)
  \big|
  \leq \big\|[AG_0^{-1/2}]^*\varphi\big\| \big\|G_0^{1/2}\psi\big\|
  < \infty
  \,.
$$
The dominated convergence theorem implies 
that $I(\eps) \to 0$ as $\eps \to 0$.
\end{proof}
\begin{Remark}
Until~\eqref{observation}, 
the proof of Theorem~\ref{Thm.evs} follows the lines of 
\cite[proof of Lem.~2]{FKV} or \cite[proof of Lem.~3]{FK9}
dealing with Schr\"odinger or Dirac operators, respectively.
To show that $I(\eps) \to 0$ as $\eps \to 0$ in the abstract case,
here we have developed a completely new approach. 
\end{Remark}
\begin{Corollary}\label{Corol.evs} 
Suppose Assumption~\ref{Ass.Ass}.
Let $\lambda \in \sigma_\mathrm{p}(H_V)$.
\begin{enumerate}
\item[\emph{(i)}]
If $\lambda \not\in \sigma(H_0)$, 
then $\|K_{\lambda}\| \geq 1$.
\item[\emph{(ii)}]
If $\lambda \in \sigma_\mathrm{c}(H_0)$, then
$
\displaystyle
  \liminf_{\eps\to 0^\pm} \|K_{\lambda+i\eps}\| \geq 1
$.
\end{enumerate}
\end{Corollary}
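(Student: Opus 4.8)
The plan is to obtain both inequalities as immediate consequences of the Birman--Schwinger correspondences established in this section, namely Theorem~\ref{Thm.evs1} for part~(i) and Theorem~\ref{Thm.evs} for part~(ii).

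For part~(i), assume $\lambda \in \sigma_\mathrm{p}(H_V) \setminus \sigma(H_0)$ and pick $\psi \in \Dom(H_V) \setminus \{0\}$ with $H_V\psi = \lambda\psi$. By Theorem~\ref{Thm.evs1} the vector $g := A\psi$ is nonzero and satisfies $K_\lambda g = -g$, so that $\|K_\lambda g\| = \|g\| \neq 0$. Dividing the trivial bound $\|K_\lambda g\| \leq \|K_\lambda\|\,\|g\|$ by $\|g\|$ yields $\|K_\lambda\| \geq 1$. (Equivalently, one observes that $-1 \in \sigma_\mathrm{p}(K_\lambda)$, hence the spectral radius of the bounded operator $K_\lambda$ is at least $1$, and so is its norm.)

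For part~(ii), assume $\lambda \in \sigma_\mathrm{p}(H_V)$ with $\lambda \in \sigma_\mathrm{c}(H_0)$. By Theorem~\ref{Thm.evs} there is a nonzero $g := A\psi$ with $K_{\lambda+i\eps}\,g \xrightarrow[\eps\to 0^\pm]{w} -g$. I would then invoke the weak (sequential) lower semicontinuity of the Hilbert space norm on $\Hilbert'$: picking a sequence $\eps_n \to 0^\pm$ along which $\|K_{\lambda+i\eps_n}\|$ approaches $\liminf_{\eps\to 0^\pm}\|K_{\lambda+i\eps}\|$, one gets
\[
  \|g\| = \|{-g}\| \leq \liminf_{n\to\infty} \|K_{\lambda+i\eps_n}\,g\|
  \leq \liminf_{n\to\infty} \|K_{\lambda+i\eps_n}\|\,\|g\| .
\]
Since $g \neq 0$, cancelling $\|g\|$ gives $\liminf_{\eps\to 0^\pm}\|K_{\lambda+i\eps}\| \geq 1$.

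I do not expect a genuine obstacle here, as both parts are short deductions from the two theorems just proved. The only step deserving a moment's care is the passage, in part~(ii), from the weak convergence $K_{\lambda+i\eps}\,g \to -g$ to the stated bound on the norms; this is precisely the standard fact that $\|\cdot\|$ is weakly lower semicontinuous, and in particular no quantitative control on the rate of convergence is required.
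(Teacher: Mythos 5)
Your argument is correct and essentially the paper's own: both parts are immediate consequences of Theorems~\ref{Thm.evs1} and~\ref{Thm.evs}, with (i) reduced to $K_\lambda g=-g$, $g\neq 0$, and (ii) to the weak convergence $K_{\lambda+i\eps}g\rightharpoonup -g$. The only cosmetic difference is that in (ii) the paper tests the weak convergence against $g$ itself and applies the Cauchy--Schwarz inequality, $\|g\|^2\,\|K_{\lambda+i\eps}\|\geq|(g,K_{\lambda+i\eps}g)|\to\|g\|^2$, which is exactly the standard proof of the weak lower semicontinuity of the norm that you invoke.
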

\begin{proof}
Let $\lambda \in \sigma_\mathrm{p}(H_V)$, 
let~$\psi \neq 0$ be a corresponding eigenvector
and set $\phi = A\psi \not= 0$.

If $\lambda \not\in \sigma(H_0)$, 
then Theorem~\ref{Thm.evs1} implies $\phi \neq 0$
$$
  \|\phi\|^2 \, \|K_\lambda\| 
  \geq |(\phi,K_\lambda\phi)| = \|\phi\|^2
  \,,
$$
from which the claim~(i) immediately follows.

If $\lambda \in \sigma_\mathrm{c}(H_0)$, 
we similarly write
$$
  \|\phi\|^2\, 
  \|K_{\lambda+i\eps}\|
  \geq |(\phi,K_{\lambda+i\eps}\phi)_\Hilbert| 
  \,.
$$
Taking the limit $\eps \to 0^\pm$,
Theorem~\ref{Thm.evs} implies
$$
  \|\phi\|^2	  \, 
  \liminf_{\eps\to 0^\pm} \|K_{\lambda+i\eps}\| 
  \geq \|\phi\|^2
  \,,
$$
from which the desired claim~(ii) immediately follows since, again, $\phi \neq 0$.
\end{proof}
%

\section{The residual spectrum}\label{Sec.res}
%
In view of the general characterisation 
(see, \eg, \cite[Prop.5.2.2]{KS-book}})
\begin{equation}\label{residual}
  \sigma_{\mathrm{r}}(H_V) 
  = \left\{\lambda \not\in \sigma_{\mathrm{p}}(H_V): \
  \bar{\lambda} \in \sigma_{\mathrm{p}}(H_V^*)\right\}
  ,
\end{equation}
the analysis of the residual spectrum of~$H_V$ can be reduced 
to the analysis of the point spectrum of the adjoint~$H_V^*$.

From the construction of the pseudo-Friedrichs extension
in Section~\ref{Sec.pseudo}, it is clear that the roles of~$A$ and~$B$
are just interchanged when considering~$H_V^*$.
It leads one to consider the adjoint Birman--Schwinger operator
\begin{equation}
  K_z^* =
  \big[BG_0^{-1/2}\big]
  \big[G_0(H_{0}-\bar{z})^{-1}\big] 
  \big[AG_0^{-1/2}\big]^*   
  \,.  
\end{equation}

In view of the above considerations,
Theorems~\ref{Thm.evs0}, \ref{Thm.evs1}, \ref{Thm.evs2}
and~\ref{Thm.evs} remain true if, in their statements, 
we simultaneously replace~$H_V$ by~$H_V^*$, $A$~by~$B$, $B$~by~$A$
and~$K_\lambda$ by~$K_{\bar\lambda}^*$ 
(notice the complex conjugate of~$\lambda$ in the latter).  
As a consequence of~\eqref{residual}, 
we therefore get the following theorem 
extending Theorem~\ref{Thm.evs0} to the residual spectrum.
\begin{Theorem}\label{Thm.res0} 
Suppose Assumption~\ref{Ass.Ass}. Then
\begin{equation*}
  \forall \lambda \in \Com \setminus \sigma(H_0) 
  \,, \qquad
  \lambda \in \sigma_{\mathrm{r}}(H_V) 
  \quad \Longleftrightarrow \quad
  -1 \in \sigma_{\mathrm{r}}(K_{\lambda}^*)
  \,.
\end{equation*}
\end{Theorem}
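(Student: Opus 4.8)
The plan is to chain the general characterisation~\eqref{residual} of the residual spectrum with the Birman--Schwinger principle of Theorem~\ref{Thm.evs0}, applied once to $H_V$ and once to the adjoint~$H_V^*$. Fix $\lambda \in \Com \setminus \sigma(H_0)$. By~\eqref{residual}, $\lambda \in \sigma_{\mathrm{r}}(H_V)$ if and only if $\lambda \notin \sigma_{\mathrm{p}}(H_V)$ and $\bar\lambda \in \sigma_{\mathrm{p}}(H_V^*)$. Because $H_0$ is self-adjoint we have $\sigma(H_0) \subset \Real$, so $\bar\lambda \in \Com \setminus \sigma(H_0)$ as well, which keeps both eigenvalue conditions in the region where the Birman--Schwinger principle is available.

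First I would dispose of the condition $\lambda \notin \sigma_{\mathrm{p}}(H_V)$: by Theorem~\ref{Thm.evs0} it is equivalent to $-1 \notin \sigma_{\mathrm{p}}(K_\lambda)$. For the condition $\bar\lambda \in \sigma_{\mathrm{p}}(H_V^*)$ I would use that, as explained in Section~\ref{Sec.pseudo} (see~\eqref{pseudo*}), $H_V^*$ is itself a pseudo-Friedrichs extension --- the one attached to the pair obtained from $(A,B)$ by interchanging the two operators --- whose associated Birman--Schwinger operator at a point~$\mu$ is the adjoint $K_{\bar\mu}^*$; the surviving hypothesis~\eqref{Ass.spectrum} of Assumption~\ref{Ass.Ass} carries over to this swapped configuration since $\sigma(K_{\bar\lambda_0}^*) = \overline{\sigma(K_{\lambda_0})}$, so that $-1 \notin \sigma(K_{\bar\lambda_0}^*)$. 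Applying Theorem~\ref{Thm.evs0} in this dual form (equivalently, Theorems~\ref{Thm.evs1} and~\ref{Thm.evs2} with the roles of $A$ and $B$ interchanged) at the point~$\bar\lambda$ then gives $\bar\lambda \in \sigma_{\mathrm{p}}(H_V^*)$ if and only if $-1 \in \sigma_{\mathrm{p}}(K_\lambda^*)$: the complex conjugation built into the dual Birman--Schwinger operator undoes the conjugation forced by~\eqref{residual}, and since $-1$ is real no further conjugation intervenes.

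Combining the two equivalences, $\lambda \in \sigma_{\mathrm{r}}(H_V)$ holds if and only if $-1 \notin \sigma_{\mathrm{p}}(K_\lambda)$ and $-1 \in \sigma_{\mathrm{p}}(K_\lambda^*)$; re-invoking the characterisation~\eqref{residual}, now for the bounded operator $K_\lambda^*$ on~$\Hilbert'$, this is exactly the statement $-1 \in \sigma_{\mathrm{r}}(K_\lambda^*)$, which is the claimed equivalence. I do not expect a genuine analytic obstacle: the proof is a short bookkeeping chain built entirely on~\eqref{residual} and on the already-established Theorems~\ref{Thm.evs1}--\ref{Thm.evs2}. The one point requiring care is the accounting of complex conjugates --- \eqref{residual} brings in $\bar\lambda$ on the $H_V^*$ side, the dual Birman--Schwinger operator itself carries a conjugation (it is $K_{\bar\mu}^*$, not $K_\mu^*$, at the point~$\mu$), and only after both are tracked does the spectral value $-1$ end up attached to $K_\lambda^*$ rather than to $K_{\bar\lambda}^*$ or to $K_\lambda$.
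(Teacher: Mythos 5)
Your route is exactly the paper's: combine the characterisation \eqref{residual} with Theorem~\ref{Thm.evs0} applied to $H_V$ and with its dual version applied to $H_V^*$, whose Birman--Schwinger operator at a point $\mu$ is $K_{\bar\mu}^*$. Your bookkeeping of the two conjugations through the chain is right, and up to the penultimate step you have correctly established
\[
  \lambda \in \sigma_{\mathrm{r}}(H_V)
  \quad\Longleftrightarrow\quad
  -1 \notin \sigma_{\mathrm{p}}(K_\lambda)
  \ \text{ and } \
  -1 \in \sigma_{\mathrm{p}}(K_\lambda^*).
\]

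The last sentence of your argument, however, does not follow. Applying \eqref{residual} to the bounded operator $T=K_\lambda^*$ on $\Hilbert'$ gives $-1 \in \sigma_{\mathrm{r}}(K_\lambda^*)$ if and only if $-1 \notin \sigma_{\mathrm{p}}(K_\lambda^*)$ and $\overline{-1}=-1 \in \sigma_{\mathrm{p}}\big((K_\lambda^*)^*\big)=\sigma_{\mathrm{p}}(K_\lambda)$; this is the \emph{transpose} of the condition displayed above, not that condition itself. The condition you actually derived is, by \eqref{residual} applied to $T=K_\lambda$, precisely the statement $-1 \in \sigma_{\mathrm{r}}(K_\lambda)$. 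The two assertions $-1 \in \sigma_{\mathrm{r}}(K_\lambda)$ and $-1 \in \sigma_{\mathrm{r}}(K_\lambda^*)$ are not interchangeable --- they are in fact mutually exclusive, since the first demands $-1\notin\sigma_{\mathrm{p}}(K_\lambda)$ while the second demands $-1\in\sigma_{\mathrm{p}}(K_\lambda)$; a unilateral shift, for which $0$ lies in the residual spectrum of the operator but is an eigenvalue of its adjoint, shows the asymmetry is real. So what your chain proves is $\lambda \in \sigma_{\mathrm{r}}(H_V) \Leftrightarrow -1 \in \sigma_{\mathrm{p}}(K_\lambda^*)\setminus\sigma_{\mathrm{p}}(K_\lambda)$, equivalently $-1\in\sigma_{\mathrm{r}}(K_\lambda)$, and it does \emph{not} deliver membership in $\sigma_{\mathrm{r}}(K_\lambda^*)$ as literally written in the target. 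This is worth flagging rather than absorbing into a final ``exactly'': the reduction via \eqref{residual} and the two point-spectrum equivalences lands unambiguously on $\sigma_{\mathrm{r}}(K_\lambda)$, so either the displayed conclusion should carry $K_\lambda$ without the adjoint, or a genuinely additional argument (which neither your chain nor the reduction itself supplies) would be needed to move the star.
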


Similarly, we get the following theorem 
extending Theorem~\ref{Thm.evs} to the residual spectrum.
\begin{Theorem}\label{Thm.res} 
Suppose Assumption~\ref{Ass.Ass}.
Let $H_V^*\psi=\bar\lambda\psi$ with some 
$\lambda \in \sigma_{\mathrm{r}}(H_V) \cap \sigma_{\mathrm{c}}(H_0)$
and $\psi\in\Dom(H_V^*) \setminus \{0\}$.
Then $g := B \psi \not=0$ and  
$
\displaystyle
  K_{\lambda + i\eps}^* \, g
  \xrightarrow[\eps \to 0^\pm]{w} - g
$. 
\end{Theorem}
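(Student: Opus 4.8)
The plan is to deduce Theorem~\ref{Thm.res} directly from Theorem~\ref{Thm.evs}, exploiting the symmetry between $H_V$ and $H_V^*$ described in the paragraph preceding Theorem~\ref{Thm.res0}, so that no genuinely new computation is required. The first point to record is the elementary observation that makes everything fit together: since $H_0$ is self-adjoint, $\sigma(H_0)\subset\Real$, hence $\sigma_{\mathrm{c}}(H_0)\subset\Real$ and $\bar\lambda=\lambda$. Thus the hypothesis really reads $H_V^*\psi=\lambda\psi$ with a real $\lambda\in\sigma_{\mathrm{c}}(H_0)$, and moreover $\lambda+i\eps\notin\sigma(H_0)$ and $\overline{\lambda+i\eps}=\lambda-i\eps\notin\sigma(H_0)$ for every $\eps\in\Real\setminus\{0\}$, so that the operators $K_{\lambda+i\eps}^*$ in the conclusion are well defined.

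Next I would invoke the structural facts from Section~\ref{Sec.pseudo}: by~\eqref{pseudo*} and the representation formula~\eqref{form*}, the operator $H_V^*$ is itself the pseudo-Friedrichs extension associated with the pair $(B,A)$, \ie\ it is obtained from the construction of $H_V$ by interchanging the roles of $A$ and $B$; and, by~\eqref{Birman_Schwinger1}, the Birman--Schwinger operator attached to the pair $(B,A)$ at a point $z$ is precisely $K_{\bar z}^*$. Therefore Theorem~\ref{Thm.evs}, with $H_V$, $A$, $B$ and $K_\mu$ replaced respectively by $H_V^*$, $B$, $A$ and $K_{\bar\mu}^*$ --- exactly the substitution spelled out before Theorem~\ref{Thm.res0} --- yields: whenever $H_V^*\psi=\mu\psi$ with $\mu\in\sigma_{\mathrm{c}}(H_0)$ and $\psi\neq0$, one has $g:=B\psi\neq0$ and $K_{\overline{\mu+i\eps}}^*\,g\xrightarrow[\eps\to0^\pm]{w}-g$. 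Specialising to $\mu=\lambda$ and using that $\lambda$ is real, so that $\overline{\mu+i\eps}=\lambda-i\eps$, together with the fact that the convergence is asserted along both one-sided families $\eps\to0^\pm$, the change of variable $\eps\mapsto-\eps$ turns this into $K_{\lambda+i\eps}^*\,g\xrightarrow[\eps\to0^\pm]{w}-g$, which is the assertion of the theorem. (A reader preferring a self-contained proof may instead repeat the proof of Theorem~\ref{Thm.evs} line by line, with~\eqref{form*} and~\eqref{pseudo*} in place of~\eqref{form} and~\eqref{pseudo}.)

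Two closing remarks on the hypotheses. The condition $\lambda\in\sigma_{\mathrm{c}}(H_0)$ is preserved under the $A\leftrightarrow B$ duality because it refers only to the unperturbed operator $H_0$; and the assumption $\lambda\in\sigma_{\mathrm{r}}(H_V)$ is in fact not needed for the argument --- via the characterisation~\eqref{residual} it serves only to guarantee that $\bar\lambda\in\sigma_{\mathrm{p}}(H_V^*)$, \ie\ that an eigenvector $\psi$ of $H_V^*$ as in the statement exists, and it is retained only to keep the formulation parallel to Theorems~\ref{Thm.res0} and~\ref{Thm.evs}. The one place where care is genuinely needed is precisely this last bookkeeping: tracking the complex conjugates and the sign of $\eps$ when transporting the conclusion of Theorem~\ref{Thm.evs} through the duality. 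Everything else is a direct quotation of results already established above.
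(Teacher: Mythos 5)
Your proposal is correct and follows exactly the route the paper itself takes: the paper derives Theorem~\ref{Thm.res} by applying Theorem~\ref{Thm.evs} with $H_V$, $A$, $B$, $K_\lambda$ replaced by $H_V^*$, $B$, $A$, $K_{\bar\lambda}^*$, relying on~\eqref{pseudo*} and~\eqref{form*} just as you do. Your explicit bookkeeping (using $\bar\lambda=\lambda$ for $\lambda\in\sigma_{\mathrm{c}}(H_0)\subset\Real$ and the $\eps\mapsto-\eps$ relabelling of the two-sided limit) and the observation that $\lambda\in\sigma_{\mathrm{r}}(H_V)$ is only needed to produce the eigenvector via~\eqref{residual} correctly fill in the details the paper leaves implicit.
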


As consequence, we also get the following analogue of Corollary~\ref{Corol.evs}.  
\begin{Corollary}\label{Corol.res} 
Suppose Assumption~\ref{Ass.Ass}.
Let $\lambda \in \sigma_\mathrm{r}(H_V)$.
\begin{enumerate}
\item[\emph{(i)}]
If $\lambda \not\in \sigma(H_0)$, 
then $\|K_{\lambda}^*\| \geq 1$.
\item[\emph{(ii)}]
If $\lambda \in \sigma_\mathrm{c}(H_0)$, then
$
\displaystyle
  \liminf_{\eps\to 0^\pm} \|K_{\lambda+i\eps}^*\| \geq 1
$.
\end{enumerate}
\end{Corollary}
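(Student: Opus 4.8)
The plan is to mimic exactly the proof of Corollary \ref{Corol.evs}, using the "dual" versions of the relevant theorems. As was noted just before the statement of Theorem \ref{Thm.res0}, Theorems \ref{Thm.evs1} and \ref{Thm.evs} remain valid if one simultaneously replaces $H_V$ by $H_V^*$, $A$ by $B$, $B$ by $A$, and $K_\lambda$ by $K_{\bar\lambda}^*$; and by the characterisation \eqref{residual}, $\lambda \in \sigma_\mathrm{r}(H_V)$ means precisely that $\bar\lambda$ is an eigenvalue of $H_V^*$. So the strategy is: take $\lambda \in \sigma_\mathrm{r}(H_V)$, pick an eigenvector $\psi \neq 0$ of $H_V^*$ for the eigenvalue $\bar\lambda$, and set $\phi := B\psi$.

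First I would treat case (i): if $\lambda \notin \sigma(H_0)$, then $\bar\lambda \notin \sigma(H_0)$ as well (since $\sigma(H_0) \subset \Real$), so the dual version of Theorem \ref{Thm.evs1} applies to $H_V^*$ and yields $\phi = B\psi \neq 0$ together with $K_{\bar{\bar\lambda}}^* \phi = K_\lambda^* \phi = -\phi$. From this one estimates
\[
  \|\phi\|^2 \, \|K_\lambda^*\| \geq |(\phi, K_\lambda^* \phi)| = \|\phi\|^2,
\]
and dividing by $\|\phi\|^2 > 0$ gives $\|K_\lambda^*\| \geq 1$, which is claim (i).

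For case (ii), if $\lambda \in \sigma_\mathrm{c}(H_0)$ then $\lambda$ is real and $\bar\lambda = \lambda \in \sigma_\mathrm{c}(H_0)$, so the dual version of Theorem \ref{Thm.evs} (equivalently, Theorem \ref{Thm.res}) applies and yields $\phi = B\psi \neq 0$ with $K_{\lambda + i\eps}^* \phi \xrightarrow[\eps \to 0^\pm]{w} -\phi$. Then for every $\eps \neq 0$ with $\lambda + i\eps \in \rho(H_0)$ one has $\|\phi\|^2 \|K_{\lambda+i\eps}^*\| \geq |(\phi, K_{\lambda+i\eps}^* \phi)|$, and passing to the limit along $\eps \to 0^\pm$, weak convergence gives $(\phi, K_{\lambda+i\eps}^*\phi) \to -\|\phi\|^2$, hence
\[
  \|\phi\|^2 \, \liminf_{\eps \to 0^\pm} \|K_{\lambda+i\eps}^*\| \geq \|\phi\|^2,
\]
and again dividing by $\|\phi\|^2 > 0$ gives $\liminf_{\eps \to 0^\pm} \|K_{\lambda+i\eps}^*\| \geq 1$. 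I do not anticipate any genuine obstacle here: the whole content has already been established in the point-spectrum sections, and the only thing to be careful about is the bookkeeping of the $A \leftrightarrow B$ swap and the complex conjugation in $K_{\bar\lambda}^*$ — but since every $\lambda$ arising in the hypotheses of (i) and (ii) is either outside $\sigma(H_0) \subset \Real$ or actually real, the conjugation is harmless and the argument goes through verbatim.
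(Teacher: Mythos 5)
Your proposal is correct and follows exactly the route the paper intends: the paper gives no separate proof of Corollary~\ref{Corol.res}, deriving it as the analogue of Corollary~\ref{Corol.evs} via the characterisation~\eqref{residual} and the dual versions of Theorems~\ref{Thm.evs1} and~\ref{Thm.evs} (i.e.\ Theorem~\ref{Thm.res}), and that is precisely what you do, including the correct bookkeeping $K_{\overline{\bar\lambda}}^*=K_\lambda^*$ and the Cauchy--Schwarz/weak-limit estimate with $\phi=B\psi\neq 0$.
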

%

\section{The essential spectrum}\label{Sec.ess}
%
As mentioned in the introduction,
among the variety of definitions of essential spectra 
for non-self-adjoint operators, 
here we choose that of Wolf
(denoted by $\sigma_{\mathrm{e}_2}$ in~\cite[Chap.~IX.1]{Edmunds-Evans}).
That is, $\lambda \in \Com$ belongs 
to the \emph{essential spectrum} $\sigma_\mathrm{e}(H)$ 
of a closed operator~$H$ in~$\Hilbert$
if~$\lambda$ is an eigenvalue of infinite geometric multiplicity
or the range of $H-\lambda$ is not closed. 
This is equivalent to the existence of 
a sequence $\{\psi_n\}_{n \in \Nat} \subset \Dom(H)$
weakly convergent to zero
such that $\|\psi_n\|=1$ for every $n \in \Nat$
and $(H-\lambda)\psi_n \to 0$ as $n \to \infty$.

The following theorem is a modification of Theorem~\ref{Thm.evs}
to deal with the essential spectrum.
Note, however, that we do not require that 
the sequence is weakly converging to zero in this theorem.
The admissible points therefore satisfy 
$\lambda \in \sigma_\mathrm{e}(H_V) \cup \sigma_\mathrm{p}(H_V)$.
However, better results of Theorems~\ref{Thm.evs1} and~\ref{Thm.evs}
are available for eigenvalues.

\begin{Theorem}\label{Thm.cont} 
Suppose Assumption~\ref{Ass.Ass}.
Let $(H_V-\lambda)\psi_n \to 0$ as $n \to \infty$ 
with some $\lambda \in \Com\setminus\sigma(H_0)$ 
and $\{\psi_n\}_{n\in\Nat}\subset\Dom(H_V)$
such that $\|\psi_n\|=1$ for all $n \in \Nat$.
Then $\phi_n := A\psi_n \not=0$ 
for all sufficiently large~$n$ and
\begin{equation}\label{BS.cont}
  \lim_{n \to \infty} 
  \frac{(\phi_n,K_{\lambda} \phi_n)}{\|\phi_n\|^2} = - 1 
  \,.
\end{equation}
\end{Theorem}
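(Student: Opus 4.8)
The plan is to run the computation from the proof of Theorem~\ref{Thm.evs1} but with the eigenvalue relation $H_V\psi=\lambda\psi$ replaced by the approximate relation $(H_V-\lambda)\psi_n = r_n$, $r_n\to 0$, and to keep track of the resulting error term. Write $C:=[BG_0^{-1/2}]^*AG_0^{-1/2}\in\mathscr{B}(\Hilbert)$ and $\phi_n:=A\psi_n$. First I would feed $\psi_n\in\Dom(H_V)$ into the representation formula~\eqref{form} together with~\eqref{eq:8}, obtaining $(\phi,r_n)=\big(G_0^{1/2}\phi,[(H_0-\lambda)G_0^{-1}+C]G_0^{1/2}\psi_n\big)$ for all $\phi\in\Dom(|H_0|^{1/2})$. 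Rewriting the left side as $(G_0^{1/2}\phi,G_0^{-1/2}r_n)$ and using that $G_0^{1/2}$ maps $\Dom(|H_0|^{1/2})$ onto~$\Hilbert$, this "de-weakens'' to
\[
  G_0^{-1/2}r_n = \big[(H_0-\lambda)G_0^{-1}+C\big]\,G_0^{1/2}\psi_n .
\]
Since $\lambda\in\rho(H_0)$, the operator $(H_0-\lambda)G_0^{-1}$ has the bounded inverse $G_0(H_0-\lambda)^{-1}$, so I can solve for $G_0^{1/2}\psi_n$ and get
\[
  G_0^{1/2}\psi_n = \big[G_0(H_0-\lambda)^{-1}\big]G_0^{-1/2}r_n - \big[G_0(H_0-\lambda)^{-1}\big]\big[BG_0^{-1/2}\big]^*\phi_n \,, \tag{$\star$}
\]
using $CG_0^{1/2}\psi_n=[BG_0^{-1/2}]^*A\psi_n=[BG_0^{-1/2}]^*\phi_n$.

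Second, I would apply the bounded operator $AG_0^{-1/2}$ to the intermediate identity $\big[G_0(H_0-\lambda)^{-1}\big]G_0^{-1/2}r_n=(I+[G_0(H_0-\lambda)^{-1}]C)G_0^{1/2}\psi_n$; recognising $AG_0^{-1/2}G_0^{1/2}\psi_n=\phi_n$ and $\big[AG_0^{-1/2}\big]\big[G_0(H_0-\lambda)^{-1}\big]\big[BG_0^{-1/2}\big]^*\phi_n = K_\lambda\phi_n$ yields the key identity $\phi_n+K_\lambda\phi_n=s_n$, where $s_n:=\big[AG_0^{-1/2}\big]\big[G_0(H_0-\lambda)^{-1}\big]G_0^{-1/2}r_n$ obeys $\|s_n\|\le C_\lambda\|r_n\|\to 0$ for a constant $C_\lambda$ depending only on $\lambda$ and the fixed bounded operators. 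Pairing with $\phi_n$ then gives $\|\phi_n\|^2+(\phi_n,K_\lambda\phi_n)=(\phi_n,s_n)$, hence
\[
  \left|\frac{(\phi_n,K_\lambda\phi_n)}{\|\phi_n\|^2}+1\right| \le \frac{\|s_n\|}{\|\phi_n\|}\,,
\]
once $\phi_n\neq 0$.

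The step I expect to be the real obstacle is showing that $\|\phi_n\|=\|A\psi_n\|$ stays bounded away from~$0$ for large~$n$ (which in particular forces $\phi_n\neq 0$). Here $(\star)$ does the work: since $G_0^{1/2}=(|H_0|+1)^{1/2}\ge I$ we have $\|G_0^{1/2}\psi_n\|\ge\|\psi_n\|=1$, whereas the right-hand side of~$(\star)$ is bounded in norm by $C'(\|r_n\|+\|\phi_n\|)$ using boundedness of $G_0(H_0-\lambda)^{-1}$, $G_0^{-1/2}$ and $[BG_0^{-1/2}]^*$. Thus $1\le C'(\|r_n\|+\|\phi_n\|)$, and since $\|r_n\|\to0$ we get $\|\phi_n\|\ge(2C')^{-1}=:c>0$ for all $n$ large. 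Combining this with the estimate of the second step gives $\big|(\phi_n,K_\lambda\phi_n)/\|\phi_n\|^2+1\big|\le \|s_n\|/c\to 0$, which is precisely~\eqref{BS.cont}. The remaining care is purely domain bookkeeping (that $(H_0-\lambda)G_0^{-1}$ is genuinely boundedly invertible, that $G_0^{-1/2}G_0^{1/2}\psi_n=\psi_n$ because $\psi_n\in\Dom(|H_0|^{1/2})$, and that the various bracketed operators are the bounded extensions from Assumption~\ref{Ass.Ass}), all routine given the properties of $G_0$ recalled in Section~\ref{Sec.pseudo}.
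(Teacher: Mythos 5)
Your proof is correct, and it takes a route that differs in substance from the paper's. The paper stays at the level of sesquilinear forms throughout: it first proves $\liminf_n\|\phi_n\|>0$ \emph{by contradiction}, testing the identity~\eqref{form} against the vectors $f_j=(H_0-\bar\lambda)^{-1}\psi_{n_j}$, and then repeats the pairing computation of Theorem~\ref{Thm.evs1} with the test vectors $\eta_n:=G_0^{-1/2}[G_0(H_0-\lambda)^{-1}]^*[AG_0^{-1/2}]^*\phi_n$ to obtain the form identity $(\phi_n,K_\lambda\phi_n)=(\eta_n,(H_V-\lambda)\psi_n)-\|\phi_n\|^2$, from which \eqref{BS.cont} follows. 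You instead ``de-weaken'' \eqref{form}+\eqref{eq:8} to the vector identity $G_0^{-1/2}r_n=[(H_0-\lambda)G_0^{-1}+C]G_0^{1/2}\psi_n$ (legitimate, since $G_0^{1/2}$ maps $\Dom(|H_0|^{1/2})$ onto $\Hilbert$), invert the bounded bijection $(H_0-\lambda)G_0^{-1}$ -- which is exactly where the hypothesis $\lambda\notin\sigma(H_0)$ enters -- and arrive at the norm identity $\phi_n+K_\lambda\phi_n=s_n$ with $\|s_n\|\lesssim\|r_n\|\to0$; the lower bound $\|\phi_n\|\geq c>0$ then drops out of $(\star)$ directly via $\|G_0^{1/2}\psi_n\|\geq\|\psi_n\|=1$, with no contradiction argument. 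Your version buys a strictly stronger intermediate conclusion ($\|\phi_n+K_\lambda\phi_n\|\to0$ in norm, i.e.\ $-1$ lies in the approximate point spectrum of $K_\lambda$, not merely the quadratic-form limit \eqref{BS.cont}) and a more mechanical, quantitative proof of the nonvanishing of $\phi_n$; the price is that it leans on the bounded invertibility of $(H_0-\lambda)G_0^{-1}$, so, unlike the paper's form-level argument, it does not adapt to the boundary regime $\lambda\in\sigma_\mathrm{c}(H_0)$ treated in Theorem~\ref{Thm.evs}, where only weak limits along $\lambda+i\eps$ survive. The domain bookkeeping you defer (e.g.\ $G_0^{-1}$ maps $\Hilbert$ into $\Dom(H_0)$, $G_0^{-1/2}G_0^{1/2}\psi_n=\psi_n$, $AG_0^{-1/2}G_0^{1/2}\psi_n=A\psi_n$) is indeed routine and closes without difficulty.
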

\begin{proof}
First of all, let us show that $\phi_n \not= 0$
for all sufficiently large~$n$. 
In fact, we establish the stronger fact that
\begin{equation}\label{liminf}
  \liminf_{n \to \infty} \|\phi_n\| > 0
  \,.
\end{equation}
By contradiction, let us assume that
there exists a subsequence 
$\{\phi_{n_j}\}_{j\in\Nat}$
such that $n_j \to \infty$ and $\phi_{n_j}=A\psi_{n_j} \to 0$ as $j \to \infty$. 
From the identity~\eqref{form} and the hypothesis, we deduce that for $f_j:=(H_0-\overline{\lambda})^{-1}\psi_{n_j}$ we have
\begin{align*}
|h_0(f_j,\psi_{n_j}) - \lambda \, (f_j,\psi_{n_j})| &\leq |h_V(f_j,\psi_{n_j})-\lambda \, (f_j,\psi_{n_j})| +|(Bf_j,A\psi_{n_j})| \\ 
&= |(f_j,(H_V-\lambda)\psi_{n_j})| + |(B(H_0-\overline{\lambda})^{-1}\psi_{n_j},\phi_{n_j})| \\
&\leq \|f_j\| \|(H_V-\lambda)\psi_{n_j}\| + \|B(H_0-\overline{\lambda})^{-1}\| \|\phi_{n_j}\| \\
&\leq \|(H_0-\overline{\lambda})^{-1}\| \|(H_V-\lambda)\psi_{n_j}\| + \|B(H_0-\overline{\lambda})^{-1}\| \|\phi_{n_j}\|.
\end{align*} 
Here we used that $B(H_0-\overline{\lambda})^{-1}= ( B G_0^{-1/2}) (
G_0^{1/2} (H_0-\overline{\lambda})^{-1}) \in \mathscr{B}(\Hilbert,
\Hilbert')$. In particular, we see that  $|h_0(f_j,\psi_{n_j}) -
\lambda \, (f_j,\psi_{n_j})| \to 0$ for $j \to \infty$. On the other
hand, since $f_j \in \Dom(H_0)$ we also have   
\begin{align*}
 h_0(f_j,\psi_{n_j}) - \lambda \, (f_j,\psi_{n_j}) 
  = \big((H_0-\overline \lambda) f_j, \psi_{n_j}\big) 
  = \|\psi_{n_j}\|^2 = 1
\end{align*} 
for every $j \in \Nat$, which leads to a contradiction. Hence 
$\phi_n \not= 0$ for all sufficiently large~$n$ and~\eqref{liminf} holds true.

The rest of the proof is similar to that of Theorem~\ref{Thm.evs}.
Since $\lambda \not\in \sigma(H_0)$,
\eqref{ev1}~implies
%
$
  (\phi_n,K_{\lambda} \phi_n)
  = v(\eta_n,\psi_n) 
$,
%
where  
$
  \eta_n := G_0^{-1/2}[G_0(H_{0}-\lambda)^{-1}]^* [AG_0^{-1/2}]^*\phi_n
$
belongs to $\Dom(|H_0|^{1/2})$ and $\|\eta_n\| \leq C_0 \|\phi_n\|$
with some constant~$C_0$ independent of~$n$.
In analogy with~\eqref{ev2}, we have
\begin{equation*} 
\begin{aligned}
  v(\eta_n,\psi_n)
  &= h_V(\eta_n,\psi_n)	 
  - h_0(\eta_n,\psi_n)
  \\
  &= \big(\eta_n,(H_V-\lambda)\psi_n\big)	 
  + \lambda \, (\eta_n,\psi_n) 
  - h_0(\eta_n,\psi_n) 
  \\
  &= \big(\eta_n,(H_V-\lambda)\psi_n\big) - \|\phi_n\|^2
  \,.
\end{aligned}
\end{equation*}
Consequently,
$$
  \left|
  \frac{(\phi_n,K_{\lambda} \phi_n)}{\|\phi_n\|^2} + 1
  \right|
  = \frac{\big|\big(\eta_n,(H_V-\lambda)\psi_n\big)\big|}{\|\phi_n\|^2}
  \leq \frac{\|\eta_n\|}{\|\phi_n\|^2} \, \|(H_V-\lambda)\psi_n\|
  \leq \frac{C_0}{\|\phi_n\|} \, \|(H_V-\lambda)\psi_n\|
  \,.
$$ 
Using~\eqref{liminf} and the hypothesis, we get the desired claim.
\end{proof}
\begin{Remark}
Theorem~\ref{Thm.cont} is inspired by \cite[Lem.~3]{FKV}
proved for Schr\"odinger operators with $\lambda \in \Com\setminus\Real$.
Here we have developed an abstract approach
and included real points $\lambda \not\in \sigma(H_0)$ as well. 
\end{Remark}
\begin{Corollary}\label{Corol.cont} 
Suppose Assumption~\ref{Ass.Ass}.
If 
$
  \lambda \in 
  [\sigma_\mathrm{e}(H_V) \cup \sigma_\mathrm{p}(H_V)]
  \setminus \sigma(H_0)
$,
then $\|K_{\lambda}\| \geq 1$.
\end{Corollary}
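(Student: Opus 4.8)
The plan is to read this off directly from Theorem~\ref{Thm.cont} together with the elementary bound relating a quadratic form to the operator norm. The crucial observation is that for every $\lambda$ in $[\sigma_\mathrm{e}(H_V) \cup \sigma_\mathrm{p}(H_V)] \setminus \sigma(H_0)$ there exists a normalised sequence $\{\psi_n\}_{n \in \Nat} \subset \Dom(H_V)$ with $\|\psi_n\| = 1$ for all $n$ and $(H_V - \lambda)\psi_n \to 0$ as $n \to \infty$. If $\lambda \in \sigma_\mathrm{e}(H_V)$, such a sequence is furnished directly by the characterisation of Wolf's essential spectrum recalled at the beginning of Section~\ref{Sec.ess} (and we do not even need the weak-null property). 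If $\lambda \in \sigma_\mathrm{p}(H_V)$ with eigenvector $\psi \neq 0$, one simply takes the constant sequence $\psi_n := \psi/\|\psi\|$, for which $(H_V - \lambda)\psi_n = 0$ for all $n$.

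Given such a sequence, Theorem~\ref{Thm.cont} applies and yields that $\phi_n := A\psi_n \neq 0$ for all sufficiently large $n$, together with the limit~\eqref{BS.cont}, that is, $(\phi_n,K_\lambda\phi_n)/\|\phi_n\|^2 \to -1$. On the other hand, by the Cauchy--Schwarz inequality $|(\phi_n,K_\lambda\phi_n)| \leq \|K_\lambda\|\,\|\phi_n\|^2$ for every $n$ with $\phi_n \neq 0$, whence $\|K_\lambda\| \geq |(\phi_n,K_\lambda\phi_n)|/\|\phi_n\|^2$. Letting $n \to \infty$ and invoking~\eqref{BS.cont} gives $\|K_\lambda\| \geq 1$, which is the assertion.

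There is essentially no serious obstacle here: the corollary is a straightforward consequence of Theorem~\ref{Thm.cont}. The only point that needs a word of justification is the existence of the approximate-eigenvector sequence in each of the two cases, which is precisely the content of the quoted Wolf characterisation for $\sigma_\mathrm{e}(H_V)$ and is immediate for $\sigma_\mathrm{p}(H_V)$. (For the eigenvalue case one could alternatively just quote Corollary~\ref{Corol.evs}(i) directly, so the genuinely new content concerns points of $\sigma_\mathrm{e}(H_V) \setminus \sigma(H_0)$.)
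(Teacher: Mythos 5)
Your proposal is correct and follows essentially the same route as the paper: a normalised approximate-eigenvector sequence (from the Wolf characterisation for $\sigma_\mathrm{e}(H_V)$, or the eigenvector itself for $\sigma_\mathrm{p}(H_V)$), then Theorem~\ref{Thm.cont} combined with the Cauchy--Schwarz inequality, with the same aside that the eigenvalue case also follows from Corollary~\ref{Corol.evs}(i). Nothing is missing.
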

\begin{proof}
Let $\lambda \not \in \sigma(H_0)$.
If $\lambda \in \sigma_\mathrm{p}(H_V)$,
then the claim follows from part~(i) of Corollary~\ref{Corol.evs}.
However, the following alternative argument applies as well.
Given any $\lambda \in \sigma_\mathrm{e}(H_V) \cup \sigma_\mathrm{p}(H_V)$, 
let $\{\psi_n\}_{n \in \Nat} \subset \Dom(H_V)$ be a corresponding sequence
satisfying $\|\psi_n\|=1$ for every $n \in \Nat$
and $H_V\psi_n-\lambda\psi_n \to 0$ as $n \to \infty$.
By Theorem~\ref{Thm.cont},
the sequence $\{\psi_n\}_{n \in \Nat}$ defined by $\phi_n = A\psi_n$
has non-zero elements for all sufficiently large~$n$ and
\begin{equation*} 
  \|K_{\lambda}\|
  \geq 
  \lim_{n \to \infty} 
  \frac{|(\phi_n,K_{\lambda} \phi_n)|}{\|\phi_n\|^2} =  1 
  \,,
\end{equation*}
where the estimate is due to the Schwarz inequality.
\end{proof}
%

\section{The remaining proofs}\label{Sec.proofs}

\subsection*{Proof of Theorem \ref{Thm.main}}
First, let us note that given (\ref{Ass.small1}),  Corollary~\ref{Corol.evs} implies that
$ \sigma_\mathrm{p}(H_V) \subset \sigma_\mathrm{p}(H_0)$ and, 
noting that  $\|K_z\|=\|K_z^*\|$ for every $z \in \rho(H_0)$, Corollary~\ref{Corol.res} implies that 
$ \sigma_\mathrm{r}(H_V) \subset \sigma_\mathrm{p}(H_0)$. 
Here we used that the residual spectrum of a self-adjoint operator is empty. Taken together we thus showed that
\begin{equation}
 [\sigma_\mathrm{p}(H_V) \cup \sigma_\mathrm{r}(H_V)] 
 \subset \sigma_\mathrm{p}(H_0),\label{eq:9}
\end{equation}
which is the first statement of part (ii) of Theorem \ref{Thm.main}. Now let us note that in general, 
$\sigma_\mathrm{c}(H_V) \subset \sigma_\mathrm{e}(H_V)$, so by  Corollary~\ref{Corol.cont} we obtain that
\begin{equation}
  \label{eq:10}
  \sigma_\mathrm{c}(H_V) \subset \sigma(H_0).
\end{equation}
The inclusions (\ref{eq:9}) and (\ref{eq:10}) ensure that $\sigma(H_V) \subset \sigma(H_0)$. Since the reverse inclusion will be shown in the proof of Theorem~\ref{Thm.main0} (which, to be sure, does not rely in any way on the results of Theorem~\ref{Thm.main}), we obtain that $\sigma(H_V)=\sigma(H_0)$, which is part (i) of Theorem \ref{Thm.main}. In particular, this implies that $\sigma_\mathrm{c}(H_0) \subset \sigma(H_V)$ and since $\sigma_\mathrm{p}(H_0) \cap \sigma_\mathrm{c}(H_0) = \emptyset$, the inclusion (\ref{eq:9}) implies that $\sigma_\mathrm{c}(H_0) \subset \sigma_\mathrm{c}(H_V)$, which is the second statement of part (ii) of Theorem \ref{Thm.main}. This concludes the proof.\qed

\subsection*{Proof of Theorem \ref{Thm.main0}}

Interestingly enough, and in contrast to the proof of Theorem \ref{Thm.main} given above, the proof of this theorem does not rely on the Birman--Schwinger principles. We will prove the theorem by contradiction.

So assume that (\ref{Ass.uniform}) holds and set $C_0:= \sup_{z \in \Com \setminus \Real} \|K_z\|<\infty$. Moreover, let us  suppose that there exists $\lambda_0 \in \sigma(H_0) \cap \rho(H_V)$. We will derive a contradiction in four steps:

\medskip
\textbf{Step 1}. 
Since $\lambda_0 \in \sigma(H_0)$ and $H_0$ is selfadjoint there exists a sequence 
$\{f_n\}$ in $\Dom(H_0)$ 
such that $\|f_n\|=1, n \in \Nat,$ and $(H_0-\lambda_0)f_n \to 0$ for
$n \to \infty$. In particular, since $\lambda_0 \in \Real$ for every
$\lambda \in \Com \setminus \Real$ we obtain that 
\begin{equation}
  \label{eq:11}
  A[(H_0-\lambda)^{-1}-(\lambda_0-\lambda)^{-1}]f_n = (\lambda_0-\lambda)^{-1}A(H_0-\lambda)^{-1}(\lambda_0-H_0)f_n \to 0 \qquad (n \to \infty).
\end{equation}
Here we used that $A(H_0-\lambda)^{-1}=(AG_0^{-1/2})(G_0^{1/2}(H_0-\lambda)^{-1}) \in \mathscr{B}(\Hilbert,\Hilbert')$ by assumption (\ref{Ass.bounded}).

\medskip
\textbf{Step 2} (compare the proof of Theorem \ref{Thm.cont}). We have $L:= \liminf_{n \to \infty} \|Af_n\| > 0$. Indeed, suppose that this is not the case. Then there would exist a subsequence 
$\{f_{n_j}\}$ of $\{f_n\}$ 
such that $Af_{n_j} \to 0$ for $j \to \infty$. Since $\lambda_0=\overline{\lambda}_0 \in \rho(H_V^*)$, we could then estimate 
\begin{align*}
  &  |h_V^*(f_{n_j},(H_V^*-\lambda_0)^{-1}f_{n_j}) - \lambda_0 (f_{n_j},(H_V^*-\lambda_0)^{-1}f_{n_j})|\\
   &= |h_0^*(f_{n_j},(H_V^*-\lambda_0)^{-1}f_{n_j}) + v^*(f_{n_j},(H_V^*-\lambda_0)^{-1}f_{n_j})- \lambda_0(f_{n_j},(H_V^*-\lambda_0)^{-1}f_{n_j})| \\
  &= |\overline{h_0((H_V^*-\lambda_0)^{-1}f_{n_j},f_{n_j})} + (Af_{n_j},B(H_V^*-\lambda_0)^{-1}f_{n_j})- \lambda_0(f_{n_j},(H_V^*-\lambda_0)^{-1}f_{n_j})|\\
  &=    |((H_0-\lambda_0)f_{n_j},(H_V^*-\lambda_0)^{-1}f_{n_j}) + (Af_{n_j},B(H_V^*-\lambda_0)^{-1}f_{n_j})| \\
  &\leq \|(H_0-\lambda_0)f_{n_j}\| \|(H_V^*-\lambda_0)^{-1}\| + \|Af_{n_j}\| \|B(H_V^*-\lambda_0)^{-1}\|.
\end{align*}
Here we used that $B(H_V^*-\lambda_0)^{-1} \in \mathscr{B}(\Hilbert,\Hilbert')$ as follows from (\ref{eq:12}) 
and assumption~\eqref{Ass.bounded}. 
In particular, we see that
\[ h_V^*(f_{n_j},(H_V^*-\lambda_0)^{-1}f_{n_j}) - \lambda_0 (f_{n_j},(H_V^*-\lambda_0)^{-1}f_{n_j}) \to 0 \qquad (j \to \infty).\]
On the other hand, since $(H_V^*-\lambda_0)^{-1}f_{n_j}  \in D(H_V^*)$, we also obtain from (\ref{form*}) that
\[ h_V^*(f_{n_j},(H_V^*-\lambda_0)^{-1}f_{n_j}) - \lambda_0 (f_{n_j},(H_V^*-\lambda_0)^{-1}f_{n_j}) = (f_{n_j},(H_V^*-\lambda_0)(H_V^*-\lambda_0)^{-1}f_{n_j}) = \|f_{n_j}\|^2=1\]
for all $j \in \Nat$, which leads to a contradiction. Hence $L=\liminf_{n \to \infty} \|Af_n\| > 0$.

\medskip
\textbf{Step 3}. 
Now let $\eps_0 > 0$ such that $\lambda_0 + i\eps \in \rho(H_0) \cap \rho(H_V)$ for all $\eps \in (0,\eps_0)$. Then using the resolvent identity (\ref{2nd.pseudo}), the triangle inequality and the fact that
$$A[B(H_0-\lambda_0+i\eps)^{-1}]^*= A[BG_0^{-1/2}G_0(H_0-\lambda_0+i\eps)^{-1}G_0^{-1/2}]^* =K(\lambda_0+i\eps) ,$$ for all $\eps \in (0,\eps_0)$ we obtain that
\begin{align*}
   \|A(H_V-\lambda_0-i\eps)^{-1}\| &= \|A(H_0-\lambda_0-i\eps)^{-1} -A[B(H_0-\lambda_0+i\eps)^{-1}]^*A(H_V-\lambda_0-i\eps)^{-1}\| \\
                                  &\geq \|A(H_0-\lambda_0-i\eps)^{-1}\| - \|A[B(H_0-\lambda_0+i\eps)^{-1}]^*A(H_V-\lambda_0-i\eps)^{-1}\| \\
                                 &\geq \|A(H_0-\lambda_0-i\eps)^{-1}\| - C_0\|A(H_V-\lambda_0-i\eps)^{-1}\|.
\end{align*}
Hence for all $\eps \in (0,\eps_0)$ and $n \in \Nat$ we obtain (with the $f_n$'s as in Step 1) that
\begin{align}
  \|A(H_V-\lambda_0-i\eps)^{-1}\|  &\geq (1+C_0)^{-1}\|A(H_0-\lambda_0-i\eps)^{-1}\| \nonumber \\
  &\geq (1+C_0)^{-1}\|A(H_0-\lambda_0-i\eps)^{-1}f_n\|.  \label{eq:14}
\end{align}

\medskip
\textbf{Step 4}. 
Now fix some $\eps \in (0,\eps_0)$ and choose $n(\lambda_0,\eps)\in \Nat$ such that, using (\ref{eq:11}) with $\lambda=\lambda_0+i\eps$, we have
\[ \| A[(H_0-\lambda_0-i\eps)^{-1}-(-i\eps)^{-1}]f_n\| \leq 1 \qquad
  (n \geq n(\lambda_0,\eps)).\] 
The triangle inequality implies that for $n \geq n(\lambda_0,\eps)$
 \[ \|A[(H_0-\lambda_0-i\eps)^{-1}f_n\| \geq \frac 1 \eps \|Af_n\| -1\]
and hence using (\ref{eq:14}) we obtain that
\begin{equation}
  \label{eq:15}
  \| A(H_V-\lambda_0-i\eps)^{-1}\| \geq (1+C_0)^{-1} \left( \frac 1 \eps \|Af_n\| -1 \right), \qquad n \geq n(\eps,\lambda_0).
\end{equation}
Now consider the limes inferior of both sides of (\ref{eq:15}) with respect to $n \to \infty$ and use Step 2 to obtain that 
\[ \| A(H_V-\lambda_0-i\eps)^{-1}\| \geq (1+C_0)^{-1} \left( \frac  L \eps - 1 \right).\]
But since $L>0$ and $\eps \in (0,\eps_0)$ was arbitrary, this implies that
\begin{equation}
\limsup_{\eps \to 0} \|A(H_V-\lambda_0-i \eps)^{-1}\| = \infty.\label{eq:16}
\end{equation}
But $\lambda_0 \in \rho(H_V)$ and the function
$$\lambda \mapsto A(H_V-\lambda)^{-1}=A(H_V-\lambda_0)^{-1}+(\lambda-\lambda_0)A(H_V-\lambda_0)^{-1}(H_V-\lambda)^{-1}$$
is analytic (hence continuous) in a neighbourhood of $\lambda_0$, so
$$\lim_{\eps \to 0}\|A(H_V-\lambda_0-i \eps)^{-1}\|  = \|A(H_V-\lambda_0)^{-1}\|< \infty$$
(that $A(H_V-\lambda_0)^{-1}\in \mathscr{B}(\Hilbert,\Hilbert')$ can be seen by writing the operator as $[AG_0^{-1/2}][G_0^{1/2}(H_V-\lambda_0)^{-1}]$, which is okay since 
$\Dom(H_V) \subset \Dom(|H_0|^{1/2})$, and noting that here the first operator is bounded by (\ref{Ass.bounded}) and the second is bounded by the closed graph theorem). This contradicts (\ref{eq:16}) and hence $\sigma(H_0) \cap \rho(H_V)$ must be empty, 
\ie\ $\sigma(H_0) \subset \sigma(H_V)$.
\qed 

\section{Applications}\label{Sec.app}
%
In this section, we apply the abstract theorems to concrete problems.

\subsection{Schr\"odinger operators in the Euclidean spaces}
Given any positive integer~$d$,
let $H_0 := -\Delta$ in $\Hilbert:=\sii(\Real^d)$
with $\Dom(H_0) := H^2(\Real^d)$.
One has $\sigma(H_0)=[0,+\infty)$
and the spectrum is purely absolutely continuous.
The absolute value~$|H_0|$ satisfies 
$\||H_0|^{1/2}\psi\|=\|\nabla\psi\|$
for every $\psi \in \Dom(|H_0|^{1/2})=H^1(\Real^d)$.

Given any $V \in L_\mathrm{loc}^1(\Real^d)$,
we use the decomposition 
\begin{equation}\label{decomposition} 
  V(x) = \sgn V(x) \, |V(x)|
  = \sgn V(x) \, |V(x)|^{1/2} \, |V(x)|^{1/2} 
\end{equation}
for almost every $x \in \Real^d$.
Here $\sgn z := z/|z|$ if $z \not= 0$ and $\sgn z:=0$ if $z=0$.
We choose $A(x):=|V(x)|^{1/2}$ 
and $B(x) := \sgn\overline{V}(x) \, |V(x)|^{1/2}$.
We use the same symbols $A,B$ 
for the associated operators of multiplication 
with $\Dom(A) = \Dom(B) = \Dom(|H_0|^{1/2})$. 
Note that by the Sobolev inequality, 
a sufficient condition to satisfy~\eqref{eq:2} is 
$V = V_1 + V_2$ 
with $V_1 \in L^{p}(\Real^d)$ and $V_2 \in L^\infty(\Real^d)$,
where
\begin{equation}\label{Sobolev}
  \begin{aligned}
    &p=1 & \mbox{if} \quad d = 1 \,,
    \\
    &p>1 & \mbox{if} \quad d = 2 \,,
    \\
    &p=d/2 & \mbox{if} \quad d \geq 3 \,,
  \end{aligned}
  \qquad \mbox{and}\qquad
  \|V_1\|_{L^{p}(\Real^d)} < C_{p,d} 
  \,. 
\end{equation}
Here $C_{1,1} := \infty$ 
(the largeness of the norm $\|V_1\|_{L^{1}(\Real)}$ 
is unrestricted if $d=1$)
and $C_{p,d} := d(d-2)|\Sphere^d|/4$ if $d \geq 3$,
where $|\Sphere^d|$ denotes the volume of the $d$-dimensional unit sphere
(\cf~\cite[Thm.~8.3]{LL}). 
If $d=2$, an estimate on the constant~$C_{p,2}$
is also known (\cf~\cite[Thm.~8.5(ii)]{LL}),
but we shall not need it. 
In summary, $V$~falls within the class
of perturbations considered in Assumption~\ref{Ass.Ass}
and the pseudo-Friedrichs extension~$H_V$ is well defined. 

\begin{Remark}\label{Rem.Friedrichs}
Since~$H_0$ is bounded from below, the associated form 
$h_0[\psi]= \||H_0|^{1/2}\psi\|^2 = \|\nabla\psi\|^2$,
$\Dom(h_0) = H^1(\Real^d)$,  
is closed and bounded from below.
The form of the perturbation~$V$ reads 
$v[\psi]=\int_{\Real^d}V|\psi|^2$, $\Dom(v)=H^1(\Real^d)$.
Under our assumption~\eqref{Sobolev}, 
the perturbed form~$h_V$ is closed and sectorial with
$\Dom(h_V)=\Dom(h_0) = H^1(\Real^d)$.
Since the Friedrichs extension of the operator $H_0+V$ 
initially defined on $\mathcal{D}:=C_0^\infty(\Real^d)$ 
is the only m-sectorial
extension with domain contained in $\Dom(h_V)$
(\cf~\cite[Thm.~VI.2.11]{Kato}),
it follows that the pseudo-Friedrichs extension~$H_V$ 
defined by Theorem~\ref{Thm.pseudo} 
is actually the usual Friedrichs extension.
\end{Remark}

Spectral properties of~$H_V$ substantially differ
in \emph{high} dimensions $d \geq 3$
and \emph{low} dimensions $d = 1,2$.

\subsubsection{High dimensions}
Applying the abstract results of 
Theorems~\ref{Thm.main} and~\ref{Thm.main.similar},
we get the following spectral stability.
\begin{Theorem}[{\cite[Thm.~6.4]{Kato_1966}}
{\cite[Thm.~2]{Frank_2011}} \& {\cite[Thm.~3.2]{Frank-Simon_2017}}]%
\label{Thm.stability}
Let $d \geq 3$ and $V \in L^{d/2}(\Real^d)$.
There exists a positive dimensional constant~$c_d$ such that if
\begin{equation}\label{Frank}
  \|V\|_{L^{d/2}(\Real^d)} < c_d
  \,,
\end{equation}
then 
$$
  \sigma(H_V) = \sigma_\mathrm{c}(H_V) = [0,+\infty)
  \,.
$$
Moreover, $H_V$ and~$H_0$ are similar to each other.
\end{Theorem}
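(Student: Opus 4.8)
The plan is to verify the hypotheses of the abstract results and then simply quote them. First I would recall the factorisation $A = |V|^{1/2}$, $B = \sgn\overline{V}\,|V|^{1/2}$ already fixed in the text, so that $B^*A = V$ (as multiplication operators) and the pseudo-Friedrichs extension $H_V$ coincides with the usual Friedrichs extension by Remark \ref{Rem.Friedrichs}. The key quantitative input is a uniform bound on the Birman--Schwinger operator $K_z = |V|^{1/2}(H_0-z)^{-1}\big(\sgn\overline V\,|V|^{1/2}\big)^*$. For $d \geq 3$ one has the classical resolvent estimate (Kenig--Ruiz--Sogge / the explicit kernel bound for $(-\Delta-z)^{-1}$) giving
\begin{equation*}
  \big\| |V|^{1/2}(H_0-z)^{-1}|V|^{1/2} \big\|_{\mathscr B(\sii(\Real^d))}
  \leq C_d \, \|V\|_{L^{d/2}(\Real^d)}
  \qquad\text{for all } z \in \Com\setminus[0,\infty),
\end{equation*}
where $C_d$ depends only on $d$; this is where the dimensional constant $c_d := 1/C_d$ enters. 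Hence \eqref{Frank} forces $\sup_{z}\|K_z\| \leq \|V\|_{L^{d/2}} \, C_d < 1$, which is exactly \eqref{Ass.small1}; in particular \eqref{Ass.bounded} and \eqref{Ass.spectrum} hold as well (the latter by Lemma \ref{Lemma1}), so Assumption \ref{Ass.Ass} is satisfied.

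With \eqref{Ass.small1} in hand, Theorem \ref{Thm.main}(i) gives $\sigma(H_V) = \sigma(H_0) = [0,\infty)$, and since $\sigma(H_0) = \sigma_\mathrm{c}(H_0)$, part (ii) of that theorem upgrades this to $\sigma(H_V) = \sigma_\mathrm{c}(H_V) = [0,\infty)$. This already yields the first displayed conclusion. For the similarity statement I would invoke Corollary \ref{Cor3} (equivalently, check Kato's Theorem \ref{Thm.main.similar} directly): here $A = |V|^{1/2}$ and $B = \sgn\overline V\,|V|^{1/2}$ satisfy $A = D_0 B$ and $B = D_1 A$ with $D_0 = \overline{\sgn V}$, $D_1 = \sgn\overline V$ bounded multiplication operators (indeed $A = UB$ with $U$ the partial isometry of multiplication by $\sgn V$, whose initial set contains $\overline{\Ran}(B)$), and one also needs $\Dom(A) = \Dom(B) = \Dom(|H_0|^{1/2}) = H^1(\Real^d)$, which holds under \eqref{Sobolev}. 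Corollary \ref{Cor3} then concludes that $H_V$ and $H_0$ are similar.

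The main obstacle is not abstract but analytic: establishing the uniform resolvent bound $\sup_z\| |V|^{1/2}(H_0-z)^{-1}|V|^{1/2}\| \leq C_d\|V\|_{L^{d/2}}$ with the \emph{correct} constant, which is precisely the content of \cite[Thm.~2]{Frank_2011} (via the uniform Sobolev inequality of Kenig--Ruiz--Sogge) and is what pins down $c_d$. Everything else is bookkeeping: matching our $K_z$ to Kato's $A(H_0-z)^{-1}B^*$ (its closure, by Remark \ref{rem1}, since $B$ is closable), checking the domain equalities needed for Corollary \ref{Cor3}, and quoting Theorem \ref{Thm.main}. I would present the resolvent estimate as a cited fact rather than reprove it, since the emphasis of this section is to illustrate that the abstract machinery recovers the known results of Kato, Frank, and Frank--Simon.
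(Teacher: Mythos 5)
Your proposal is correct and follows essentially the same route as the paper: the uniform Kenig--Ruiz--Sogge/Frank resolvent bound yields $\sup_z\|K_z\|\leq k_{p,d}\|V\|_{L^{d/2}}$ (the paper derives this from the $L^p\to L^{p'}$ estimate with a short H\"older argument and the choice $p=2d/(d+2)$, rather than quoting it), after which Theorem~\ref{Thm.main} gives $\sigma(H_V)=\sigma_\mathrm{c}(H_V)=[0,+\infty)$ and the similarity follows via Corollary~\ref{Cor3} (the paper equivalently checks Kato's smoothness criterion~\eqref{eq:5} directly with the same bound). One bookkeeping slip: $D_0$ should be multiplication by $\sgn V$, not $\overline{\sgn V}=\sgn\overline V$ (your parenthetical $A=UB$ with $U=\sgn V$ is the correct version), which does not affect the argument.
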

\begin{proof}
The idea of the proof in all dimensions $d\geq 3$ is due to Frank~\cite{Frank_2011}.
Based on a uniform Sobolev inequality due to~\cite{Kenig-Ruiz-Sogge_1987},
Frank established the resolvent estimate
(\cf~\cite[Eq.~(8)]{Frank_2011})
$$
  \forall z \in \Com\setminus[0,+\infty) 
  \,, \qquad
  \|(H_0-z)^{-1}\|_{L^p(\Real^d) \to L^{p'}(\Real^d)}
  \leq k_{p,d} \, |z|^{-(d+2)/2+d/p} 
  \,,
$$
where $2d/(d+2) \leq p \leq 2(d+1)/(d+3)$, $1/p+1/p'=1$
and~$k_{p,d}$ is a positive constant. 
For every $\phi,\psi \in H^1(\Real^d)$
and $z \not\in [0,+\infty)$ 
we obtain, taking Remark~\ref{rem1} into account,   
$$
\begin{aligned}
  |(\phi,K_z\psi)| 
  &= \big|\big(|V|^{1/2}\phi,(H_0-z)^{-1}|V|^{1/2}\tilde{\psi}\big)\big|
  \\
  &\leq 
  k_{p,d} \, |z|^{-(d+2)/2+d/p} \,
  \||V|^{1/2}\phi\|_{L^p(\Real^d)}
  \||V|^{1/2}\tilde{\psi}\|_{L^p(\Real^d)}
  \\
  &\leq 
  k_{p,d} \, |z|^{-(d+2)/2+d/p} \,
  \|V\|_{L^{p/(2-p)}(\Real^d)}
  \|\phi\| \|\tilde\psi\|
  \,, 
\end{aligned}
$$ 
where $\tilde\psi := (\sgn\bar{V})\psi$, 
so $\|\tilde\psi\|= \|\psi\|$.
Since~$H^1(\Real^d)$ is dense in $\sii(\Real^d)$,
this inequality extends to the whole Hilbert space
and we get 
$$
  \|K_z\| \leq k_{p,d} \, |z|^{-(d+2)/2+d/p} \,
  \|V\|_{L^{p/(2-p)}(\Real^d)}
  \,.
$$
Choosing $p:=2d/(d+2)$, 
we get the uniform (\ie~$z$-independent) bound
$$
  \|K_z\| \leq k_{p,d} \, 
  \|V\|_{L^{d/2}(\Real^d)}
  \,.
$$
By assuming~\eqref{Frank} with $c_d := k_{p,d}^{-1}$, 
we get the validity of~\eqref{Ass.small1}.
It follows by Theorem~\ref{Thm.main} that 
the spectrum of~$H_V$ is purely continuous
and equal to $[0,+\infty)$.
Furthermore, the same estimates as above ensure 
that the supremum in~\eqref{eq:5} is bounded
(also for~$A$ being replaced by~$B$)
from above by $2 k_{p,d} \, \|V\|_{L^{d/2}(\Real^d)}$.
Consequently, $A,B$ are $H_0$-smooth 
and hence similarity of $H_0$ and $H_V$ follows by
Corollary \ref{Cor3}.  
\end{proof}
\begin{Remark}
Assuming smallness of~$V$ in \emph{different} scales of Lebesgue spaces,
Theorem~\ref{Thm.stability} comes back to Kato \cite[Thm.~6.4]{Kato_1966}.
The identification of the optimal Lebesque space $L^{d/2}(\Real^d)$
(thanks to the availability of the uniform Sobolev inequality
\cite{Kenig-Ruiz-Sogge_1987})
and the present proof
is due to Frank~\cite[Thm.~2]{Frank_2011},
who established the absence of (discrete) eigenvalues of~$H_V$ outside $[0,+\infty)$.
In \cite[Thm.~3.2]{Frank-Simon_2017}, Frank and Simon excluded
(embedded) eigenvalues inside $[0,+\infty)$ as well.
The novelty of our statement here is that we additionally show that 
Frank's condition actually implies the stability of the continuous 
and residual spectra, too,
and even the similarity of~$H_V$ to~$H_0$.
\end{Remark}

For physical applications in dimension $d=3$, 
the space $L^{3/2}(\Real^3)$ is too restrictive, 
for it excludes potentials with critical singularities $V(x) \sim |x|^{-2}$
as $x \to 0$.
To include the singular potentials,
Frank~\cite[Thm.~3]{Frank_2011} showed that the $L^{3/2}$-norm
can be replaced by the Morrey--Campanato norm.
Alternatively, one can use the following old observation of Kato. 
\begin{Theorem}[{\cite[Thm.~6.1]{Kato_1966}}]%
\label{Thm.stability.3D.Kato}
Let $d = 3$ and $V \in L_\mathrm{loc}^{1}(\Real^d)$.
Let~$L$ be the integral operator in $\sii(\Real^3)$ with the kernel 
$$
  \frac{|V(x)|^{1/2} \, |V(y)|^{1/2}}{4\pi \, |x-y|} 
  \,.
$$
If~$L$ is bounded and there exists a constant $c<1$ such that
\begin{equation}\label{Kato}
  \|L\| \leq c 
  \,,
\end{equation}
then the conclusions of Theorem~\ref{Thm.stability} hold true.
\end{Theorem}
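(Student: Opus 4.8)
The plan is to verify, for the factorisation $V=B^{*}A$ with $A(x):=|V(x)|^{1/2}$ and $B(x):=\sgn\overline{V}(x)\,|V(x)|^{1/2}$ taken as maximal multiplication operators, that Assumption~\ref{Ass.Ass} and the smallness condition~\eqref{Ass.small1} hold, and then to quote Theorem~\ref{Thm.main} for the coincidence of the spectra and Corollary~\ref{Cor3} (equivalently, Kato's Theorem~\ref{Thm.main.similar}) for the similarity---proceeding exactly as in the proof of Theorem~\ref{Thm.stability}, with the hypothesis~\eqref{Kato} now playing the role of Frank's uniform resolvent estimate. The one analytic input is that in $d=3$ the free resolvent is explicit: for $z\in\Com\setminus[0,+\infty)$ the operator $(H_0-z)^{-1}$ has integral kernel $e^{i\sqrt z\,|x-y|}/(4\pi|x-y|)$, where $\sqrt z$ is the branch with $\Im\sqrt z>0$; consequently the modulus of this kernel is at most $1/(4\pi|x-y|)$, and the same holds for the kernel $e^{-\sqrt b\,|x-y|}/(4\pi|x-y|)$ of $(H_0+b)^{-1}$, $b>0$. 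I would pair this with the elementary \emph{domination principle}: if $T$ is an integral operator on $\sii(\Real^{3})$ with kernel $t$ satisfying $|t(x,y)|\le\ell(x,y)$ a.e., where $\ell\ge 0$ is the kernel of a bounded operator $L$, then $T$ extends to a bounded operator with $\|T\|\le\|L\|$, because $|(g,Tf)|\le(|g|,L|f|)\le\|L\|\,\|g\|\,\|f\|$.

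For~\eqref{Ass.bounded}: a routine truncation of $V$ reduces matters to bounded, compactly supported $V$, for which $|V|^{1/2}(H_0+b)^{-1/2}$ is manifestly bounded and $\big\||V|^{1/2}(H_0+b)^{-1/2}\big\|^{2}=\big\||V|^{1/2}(H_0+b)^{-1}|V|^{1/2}\big\|$; the operator on the right is an integral operator whose kernel is dominated in modulus by $|V(x)|^{1/2}|V(y)|^{1/2}/(4\pi|x-y|)$, so by the domination principle and~\eqref{Kato} its norm is $\le\|L\|\le c$. Passing back to general $V$ by monotone convergence gives the relative form bound $\int_{\Real^{3}}|V|\,|\psi|^{2}\le c\,(\|\nabla\psi\|^{2}+b\|\psi\|^{2})$ for $\psi\in H^{1}(\Real^{3})$ with $c<1$, which is equivalent to $\big\|A(H_0+b)^{-1/2}\big\|\le\sqrt c<\infty$ (a standard reformulation of relative form boundedness), and the same for $B$, whose modulus is again $|V|^{1/2}$. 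Hence~\eqref{Ass.bounded} holds, $\Dom(|H_0|^{1/2})=H^{1}(\Real^{3})\subset\Dom(A)\cap\Dom(B)$, and since $B$ is closable, $K(z)=\overline{A(H_0-z)^{-1}B^{*}}$ by Remark~\ref{rem1}. For $f,g\in C_0^{\infty}(\Real^{3})$ the quantity $(g,A(H_0-z)^{-1}B^{*}f)$ is the absolutely convergent double integral against $|V(x)|^{1/2}e^{i\sqrt z\,|x-y|}(\sgn V(y))|V(y)|^{1/2}/(4\pi|x-y|)$, whose modulus is again $\le\ell$; the domination principle therefore yields $\|K_z\|\le\|L\|\le c<1$ for every $z\in\rho(H_0)=\Com\setminus[0,+\infty)$, i.e.~\eqref{Ass.small1}. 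By Lemma~\ref{Lemma1}(i) this also gives~\eqref{Ass.spectrum}, so all of Assumption~\ref{Ass.Ass} is in force; since $H_0\ge 0$, $H_V$ is the usual Friedrichs extension (Remark~\ref{Rem.Friedrichs}).

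Now Theorem~\ref{Thm.main} applies. As $\sigma(H_0)=\sigma_{\mathrm{c}}(H_0)=[0,+\infty)$ and $\sigma_{\mathrm{p}}(H_0)=\sigma_{\mathrm{r}}(H_0)=\emptyset$, its parts~(i) and~(ii) give $\sigma(H_V)=\sigma_{\mathrm{c}}(H_V)=[0,+\infty)$. For the similarity, observe that $A$ and $B$ are closed, that $A=D_0B$ and $B=D_1A$ with $D_0,D_1\in\mathscr{B}(\Hilbert)$ the multiplications by $\sgn V$ and $\sgn\overline{V}$, and that the same domination estimate, inserted into Kato's criterion~\eqref{eq:5} via $|((H_0-z)^{-1}A^{*}\psi,A^{*}\psi)|=|(|V|^{1/2}(H_0-z)^{-1}|V|^{1/2}\psi,\psi)|\le\|L\|\,\|\psi\|^{2}$ and the analogous bound with $B^{*}$ in place of $A^{*}$, shows that $A$ and $B$ are smooth relative to $H_0$. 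Hence Corollary~\ref{Cor3}---or directly Kato's Theorem~\ref{Thm.main.similar}, as in the proof of Theorem~\ref{Thm.stability}---gives that $H_V$ and $H_0$ are similar, so all conclusions of Theorem~\ref{Thm.stability} hold.

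The proof is mostly bookkeeping once the explicit three-dimensional resolvent kernel is available. The points requiring a little care are: that this explicit integral operator really is the closure of the abstract Birman--Schwinger operator $K(z)$ of~\eqref{Birman_Schwinger1} (which uses the density of $\Dom(B^{*})$ and Remark~\ref{rem1}); that boundedness of $|V|^{1/2}(H_0+b)^{-1}|V|^{1/2}$ genuinely yields~\eqref{Ass.bounded} (the truncation plus the reformulation of relative form boundedness); and, since $\Dom(|V|^{1/2})$ may strictly contain $H^{1}(\Real^{3})$, that the identification of $H_V$ with Kato's extension and the similarity conclusion are obtained through the relative-smoothness route above rather than by literally matching operator domains.
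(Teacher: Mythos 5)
Your proposal is correct and follows essentially the same route as the paper: the explicit three-dimensional resolvent kernel dominated pointwise by the $z=0$ kernel gives $\|K_z\|\le\|L\|\le c$ uniformly, and the same estimate bounds the supremum in~\eqref{eq:5}, so that Theorem~\ref{Thm.main} and Corollary~\ref{Cor3} (equivalently Kato's Theorem~\ref{Thm.main.similar}) yield the spectral identity and the similarity. The only additions are bookkeeping the paper leaves implicit—your truncation argument verifying~\eqref{Ass.bounded} corresponds to the equivalence of~\eqref{Kato} with the form-subordination condition~\eqref{FKV} noted in the remark following Theorem~\ref{Thm.stability.3D}—and your honest flagging of the closedness/domain wrinkle in invoking Corollary~\ref{Cor3}, which affects the paper's own wording equally.
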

\begin{proof}
The idea of the proof is based on the explicit knowledge 
of the integral kernel of $(H_0-z)^{-1}$ in~$\Real^3$:
\begin{equation}\label{Green}
  G_z(x,y) := \frac{e^{-\sqrt{-z}\,|x-y|}}{4\pi \, |x-y|}
  \,,
\end{equation}
where $z \in \Com\setminus(0,+\infty)$
and $x,y \in \Real^3$ with $x \not= y$.
We use the branch of the square root on $\Com \setminus (-\infty,0]$ 
with positive real part.
The peculiarity of dimension $d=3$ is that 
one has the uniform pointwise bound
\begin{equation}\label{peculiar}
  \forall z \in \Com\setminus(0,+\infty)
  \,, \
  x,y \in \Real^3, \, x \not= y
  \,, \qquad
  |G_z(x,y)| \leq G_0(x,y)
  \,.
\end{equation}
Consequently, 
for every $\phi,\psi \in C_0^\infty(\Real^3)$, one has
%
\begin{align}\label{Kolja}
  |(\phi,K_z\psi)|
  &\leq 
  \int_{\Real^3}\int_{\Real^3} 
  (|V|^{1/2}|\phi|)(x) \, |G_z(x,y)| \, (|V|^{1/2}|\psi|)(y) 
  \, \der x \, \der y
  \nonumber \\
  &\leq  
  \int_{\Real^3}\int_{\Real^3} 
  (|V|^{1/2}|\phi|)(x) \, G_0(x,y) \, (|V|^{1/2}|\psi|)(y) 
  \, \der x \, \der y
  \,.
\end{align}
%
Note that the last integral is well defined because 
the functions $\phi,\psi$ are assumed to have a compact support. 
Using the definition of~$L$ and the fact that the space 
$C_0^\infty(\Real^3)$ is dense in $\sii(\Real^3)$,
one gets 
$$
  |(\phi,K_z\psi)| \leq (|\phi|,L\,|\psi|)
  \leq c \|\phi\| \|\psi\|
$$
for every $\phi,\psi \in \sii(\Real^3)$.
Consequently, $\|K_z\| \leq c$ uniformly in $z \in \Com\setminus[0,+\infty)$,
so~\eqref{Ass.small1} holds true.
Furthermore, the same estimates as above ensure 
that the supremum in~\eqref{eq:5}
is bounded from above by~$2c$.
Hence, the sufficient conditions of the abstract 
Theorem~\ref{Thm.main} and Corollary \ref{Cor3} are satisfied.
\end{proof} 

It is desirable to obtain sufficient conditions which guarantee
the validity of~\eqref{Kato}. 
An obvious choice is to bound the operator norm of~$L$
by its Hilbert--Schmidt norm leading to the sufficient condition
\begin{equation}\label{Rollnik}
  \|V\|_{R(\Real^3)} 
  := 
  \sqrt{
  \int_{\Real^3}\int_{\Real^3} 
  \frac{|V(x)| \, |V(y)|}{|x-y|^2} 
  \, \der x \, \der y
  }
  < 4\pi
  \,,
\end{equation}
where $\|\cdot\|_{R(\Real^3)}$ is the Rollnik norm.
This weaker version of Theorem~\ref{Thm.stability.3D.Kato}
is mentioned in~\cite[Rem.~6.2]{Kato_1966}
(see also~\cite[Thm.~III.12]{SiQF} and~\cite[Thm.~XIII.21]{RS4}
for partial results).
Note that $R(\Real^3) \supset L^{3/2}(\Real^3)$
by the Sobolev inequality.

An alternative approach was followed by Fanelli, Vega
and one of the present authors in~\cite{FKV}.
\begin{Theorem}[{\cite[Thm.~6.1]{Kato_1966}}, {\cite[Thm.~1]{FKV}}]%
\label{Thm.stability.3D}
Let $d = 3$ and $V \in L_\mathrm{loc}^{1}(\Real^d)$.
If there exists a constant $c<1$ such that
\begin{equation}\label{FKV}
  \forall \psi \in H^1(\Real^3)
  \,, \qquad
  \int_{\Real^3} |V| |\psi|^2 \leq c \int_{\Real^3} |\nabla\psi|^2
  \,,
\end{equation}
then the conclusions of Theorem~\ref{Thm.stability} hold true.
\end{Theorem}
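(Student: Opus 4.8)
The plan is to verify that the hypothesis \eqref{FKV} implies the abstract smallness condition \eqref{Ass.small1}, at which point the conclusion follows verbatim from Theorem~\ref{Thm.main} and Corollary~\ref{Cor3}, exactly as in the proofs of Theorems~\ref{Thm.stability} and~\ref{Thm.stability.3D.Kato}. So the only real work is a uniform (in $z$) bound $\|K_z\| \le c$ with the \emph{same} constant $c$ appearing in \eqref{FKV}. Since $K_z$ is the bounded extension of $A(H_0-z)^{-1}B^*=|V|^{1/2}(H_0-z)^{-1}\sgn(\bar V)|V|^{1/2}$ and $C_0^\infty(\Real^3)$ is a core, it suffices to estimate $|(\phi,K_z\psi)|$ for $\phi,\psi\in C_0^\infty(\Real^3)$.

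The key step is a resolvent identity that turns the condition \eqref{FKV} on the form $\||V|^{1/2}\psi\|^2 \le c\|\nabla\psi\|^2$ into a bound on the Birman--Schwinger kernel uniformly in the spectral parameter. First I would reduce to $z=-\kappa^2$ with $\kappa>0$ (i.e.\ $z$ on the negative real axis) and to general $z\in\Com\setminus[0,\infty)$ afterwards; on the negative axis the operator $(H_0-z)^{-1}=(-\Delta+\kappa^2)^{-1}$ is positive and one can write $(H_0-z)^{-1/2}|V|^{1/2}$ is bounded with norm $\le \sqrt c$ directly from \eqref{FKV} (since $\||V|^{1/2}(-\Delta+\kappa^2)^{-1/2}g\|^2 = \||V|^{1/2}h\|^2 \le c\,\|\nabla h\|^2 \le c\,\|(-\Delta+\kappa^2)^{1/2}h\|^2 = c\|g\|^2$ with $h=(-\Delta+\kappa^2)^{-1/2}g$). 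Hence $\|K_{-\kappa^2}\| = \| |V|^{1/2}(-\Delta+\kappa^2)^{-1}\sgn(\bar V)|V|^{1/2}\| \le \||V|^{1/2}(-\Delta+\kappa^2)^{-1/2}\|\cdot\|(-\Delta+\kappa^2)^{-1/2}|V|^{1/2}\| \le c$, uniformly in $\kappa$. The remaining task is to pass from the negative axis to the whole of $\Com\setminus[0,\infty)$, and here the natural tool is the explicit Green's function \eqref{Green}: for general $z=|z|e^{i\theta}$ the kernel satisfies $|G_z(x,y)| = \dfrac{e^{-\Re(\sqrt{-z})|x-y|}}{4\pi|x-y|} \le \dfrac{e^{-\kappa|x-y|}}{4\pi|x-y|} = G_{-\kappa^2}(x,y)$ with $\kappa := \Re\sqrt{-z}>0$, so $|(\phi,K_z\psi)| \le (\,|V|^{1/2}|\phi|,\,(-\Delta+\kappa^2)^{-1}|V|^{1/2}|\psi|\,) \le c\,\|\phi\|\|\psi\|$ by the negative-axis bound applied to $|\phi|,|\psi|$ (noting $\||V|^{1/2}|\phi|\| = \||V|^{1/2}\phi\|$ etc.).

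Having established $\sup_{z\in\rho(H_0)}\|K_z\|\le c<1$, i.e.\ \eqref{Ass.small1}, the conclusions follow: Theorem~\ref{Thm.main} gives $\sigma(H_V)=\sigma(H_0)=[0,\infty)$ and, since $\sigma(H_0)=\sigma_\mathrm{c}(H_0)$, that $\sigma(H_V)=\sigma_\mathrm{c}(H_V)=[0,\infty)$. For the similarity of $H_V$ and $H_0$ one invokes Corollary~\ref{Cor3}: in the present factorisation $A=|V|^{1/2}$ and $B=\sgn(\bar V)|V|^{1/2}$ are closed multiplication operators with $\Dom(A)=\Dom(B)=\Dom(|H_0|^{1/2})=H^1(\Real^3)$, and $A=UB$, $B=U^*A$ with $U$ the (unitary-on-its-range) multiplication by $\sgn V$, a partial isometry, so the hypotheses of Corollary~\ref{Cor3} hold and $H_V$ is similar to $H_0$.

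I expect the main obstacle to be the passage from the negative real axis to complex $z$. The pointwise domination $|G_z|\le G_{-\kappa^2}$ with $\kappa=\Re\sqrt{-z}$ is elementary once one has \eqref{Green}, but one must be slightly careful about the branch of the square root (the one with positive real part, as stated in Theorem~\ref{Thm.stability.3D.Kato}) and about the behaviour as $z$ approaches $[0,\infty)$, where $\kappa\to 0$; the bound $\|K_{-\kappa^2}\|\le c$ being uniform down to $\kappa=0$ is exactly what makes the argument work there. A minor technical point is justifying that the pointwise kernel estimate gives the operator bound: this is the same density-and-positivity argument used in \eqref{Kolja}, replacing $G_0$ there by $G_{-\kappa^2}$ and using that $(-\Delta+\kappa^2)^{-1}$ has kernel $G_{-\kappa^2}\ge 0$.
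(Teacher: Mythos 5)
Your proposal is correct and follows essentially the same route as the paper: translate \eqref{FKV} into the uniform bound $\||V|^{1/2}(H_0+\kappa^2)^{-1/2}\|\le\sqrt{c}$, use the explicit Green function \eqref{Green} and pointwise domination of $|G_z|$ to conclude $\sup_z\|K_z\|\le c$, and then invoke Theorem~\ref{Thm.main} together with Corollary~\ref{Cor3} (via $A=UB$, $B=U^*A$ with $U=\sgn V$) for the spectral identity and the similarity. The only deviation is technical and harmless: you dominate by $G_{-\kappa^2}$ with $\kappa=\Re\sqrt{-z}>0$, which lets you stay strictly on the negative axis, whereas the paper dominates by $G_0$ at the spectral edge and then justifies the resulting quadratic form through the bounded extension of $H_0^{-1/2}|V|^{1/2}$ and an $\eps\to0^+$ dominated-convergence argument.
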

\begin{proof}
First of all, notice that~\eqref{FKV} is equivalent to 
$
  \||V|^{1/2}H_0^{-1/2}g\|^2 
  \leq c \, \|g\|^2
$
for every $g \in \Ran(H_0^{1/2})$.
Since $0 \in \sigma_\mathrm{c}(H_0)$
(in fact, the spectrum of~$H_0$ is purely continuous),
the range $\Ran(H_0^{1/2})$ is dense in $\sii(\Real^3)$.
Consequently, 
$
  |V|^{1/2}H_0^{-1/2}
$
extends to a bounded operator in $\sii(\Real^3)$
with 
\begin{equation}\label{a1}
  \big\||V|^{1/2}H_0^{-1/2}\big\| \leq \sqrt{c}
  \,.
\end{equation}
By taking the adjoint, 
$
  H_0^{-1/2}|V|^{1/2}
$
also extends to a bounded operator in $\sii(\Real^3)$ with 
\begin{equation}\label{a2}
  \big\|H_0^{-1/2}|V|^{1/2}\big\| \leq \sqrt{c}
  \,.
 \end{equation}

We come back to the inequality~\eqref{Kolja}
valid for every $\phi,\psi \in C_0^\infty(\Real^3)$.
Using the dominated convergence theorem, 
we write
$$
\begin{aligned}
  |(\phi,K_z\psi)|
  &\leq 
  \lim_{\eps \to 0^+}
  \int_{\Real^3}\int_{\Real^3} 
  (|V|^{1/2}|\phi|)(x) \, G_{-\eps^2}(x,y) \, (|V|^{1/2}|\psi|)(y) 
  \, \der x \, \der y
  \\
  &=
  \lim_{\eps \to 0^+}
  \big(|V|^{1/2}|\phi|,(H_0+\eps^2)^{-1}|V|^{1/2}|\psi|\big)
  \\
  &=
  \lim_{\eps \to 0^+}
  \big((H_0+\eps^2)^{-1/2}|V|^{1/2}|\phi|,(H_0+\eps^2)^{-1/2}|V|^{1/2}|\psi|\big)
  \\
  &= 
  \big(H_0^{-1/2}|V|^{1/2}|\phi|,H_0^{-1/2}|V|^{1/2}|\psi|\big)
  \\
  &\leq 
  \big\|H_0^{-1/2}|V|^{1/2}\big\|^2 \|\phi\| \|\psi\|
  \\
  &\leq c \, \|\phi\| \|\psi\|
  \,.
\end{aligned}
$$
Here the last equality employs that 
$|V|^{1/2}|\phi|, |V|^{1/2}|\psi| \in \Ran(H_0^{1/2})$.
Since $C_0^\infty(\Real^3)$ is dense in $\sii(\Real^3)$,
we get $\|K_z\| \leq c$ uniformly in $z \in \Com\setminus(0,+\infty)$,
so~\eqref{Ass.small1} holds true.
Furthermore, the same estimates as above ensure 
that the supremum in~\eqref{eq:5} is 
bounded from above by the constant~$2c$. 
Hence, the sufficient conditions of the abstract
Theorem~\ref{Thm.main} and	 Corollary \ref{Cor3} are satisfied.  
\end{proof}
\begin{Remark} 
Except for the similarity of~$H_V$ and~$H_0$,
Theorem~\ref{Thm.stability.3D} was derived in~\cite{FKV}
without the knowledge of Kato's Theorem~\ref{Thm.main.similar} 
from~\cite{Kato_1966}.
Unaware of~Theorem~\ref{Thm.main0},
the inclusion $\sigma_\mathrm{c}(H_V) \subset \sigma(H_0)$
was derived by explicitly constructing a singular sequence of~$H_V$
corresponding to all points of $[0,+\infty)$.

It turns out that the hypotheses~\eqref{Kato} and~\eqref{FKV} are equivalent. 
The fact that~\eqref{FKV} implies~\eqref{Kato} is clear from the proof
of Theorem~\ref{Thm.stability.3D}. 
Conversely, $L=TT^*$ with $T:=|V|^{1/2}H_0^{-1/2}$,
so~\cite{Kato} implies $\|T\| \leq \sqrt{c}$,
which is~\eqref{a1} equivalent to~\eqref{FKV}. 
\end{Remark}

By the Sobolev inequality,
\eqref{FKV} holds provided that $V \in L^{3/2}(\Real^3)$
and (\cf~\eqref{Sobolev})
$$
  \|V\|_{L^{3/2}(\Real^3)} < C_{3/2,3} = 3^{3/2}\pi^2/4
  \,.
$$
This gives an estimate to the constant~$c_3$ of Theorem~\ref{Thm.stability}.
It turns out that this value is optimal
as demonstrated by Frank~\cite[Thm.~2]{Frank_2011}.
Outside the range of the Lebesgue as well as Rollnik classes,
sufficient conditions ensuring~\eqref{FKV} follow by the Hardy inquality
$-\Delta \geq (1/4)|x|^{-2}$, see~\cite[Eq.~(7)]{FKV}.
To conclude, let us compare the smallness sufficient conditions 
which ensure that the operators~$H_V$ and~$H_0$
are similar to each other in the three-dimensional situation: 
$$
\begin{aligned}
 & \eqref{Frank} 
  & \qquad \Longrightarrow \qquad &
  \eqref{Rollnik}
  & \qquad \Longrightarrow \qquad &
  \eqref{FKV}
  & \qquad \Longleftrightarrow \qquad &
  \eqref{Kato}
  \,.
  \\
  &\text{Lebesgue } L^{3/2} 
  && \text{Rollnik } R 
  && \text{form-subordination}
  && \text{Kato}
\end{aligned}
$$

We expect that Theorem~\ref{Thm.stability.3D} extends to higher dimensions.
\begin{Conjecture} 
Let $d > 3$ and $V \in L_\mathrm{loc}^{1}(\Real^d)$.
If there exists a constant $c<1$ such that
\begin{equation*} 
  \forall \psi \in H^1(\Real^d)
  \,, \qquad
  \int_{\Real^d} |V| |\psi|^2 \leq c \int_{\Real^d} |\nabla\psi|^2
  \,,
\end{equation*}
then the conclusions of Theorem~\ref{Thm.stability} hold true.
\end{Conjecture}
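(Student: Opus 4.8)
The plan is to reduce the statement, exactly as in the proofs of Theorems~\ref{Thm.stability.3D.Kato} and~\ref{Thm.stability.3D}, to the single uniform operator bound
\begin{equation}\label{conj.goal}
  \sup_{z \in \Com \setminus [0,+\infty)} \|K_z\| \leq c ,
\end{equation}
with the same constant~$c$ as in the hypothesis. Indeed, granted~\eqref{conj.goal}, assumption~\eqref{Ass.small1} holds, so Theorem~\ref{Thm.main} gives $\sigma(H_V)=\sigma_\mathrm{c}(H_V)=\sigma(H_0)=[0,+\infty)$; moreover, writing $A=|V|^{1/2}$ and $B=(\sgn\bar V)|V|^{1/2}$ one has $A=(\sgn V)B$ and $B=(\sgn\bar V)A$ with bounded multipliers, and the same bound~\eqref{conj.goal} controls the supremum in~\eqref{eq:5} (for~$A$ and, by interchanging $A,B$, for~$B$), so the similarity of $H_V$ and~$H_0$ follows as in dimension three via Corollary~\ref{Cor3}. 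Note also that~\eqref{FKV} is equivalent to $\||V|^{1/2}H_0^{-1/2}\|\le\sqrt c$, hence to $\||V|^{1/2}H_0^{-1}|V|^{1/2}\|\le c$; since $(H_0-z)^{-1}\le H_0^{-1}$ for $z<0$ and the kernel of $(H_0-z)^{-1}$ is then non-negative, this already proves~\eqref{conj.goal} for $z$ on the negative real half-line. The task is thus to propagate the bound to the whole slit plane.

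The difficulty, and the point at which the $d=3$ argument breaks down, is the following. In dimension three one finishes by the pointwise domination $|G_z(x,y)|\le G_0(x,y)$, valid for \emph{all} $z\in\Com\setminus(0,+\infty)$; for $d\ge 4$ this fails once $z$ approaches $[0,+\infty)$, because the kernel of $(H_0-z)^{-1}$ is governed by the modified Bessel function $K_{(d-2)/2}(\sqrt{-z}\,|x-y|)$, which lacks the clean exponential decay of $K_{1/2}$ and whose modulus is not dominated by its value at the origin away from the positive real axis. Equivalently, $H_0(H_0-z)^{-1}=I+z(H_0-z)^{-1}$ is not uniformly bounded near $[0,+\infty)$, so one cannot afford to peel $|V|^{1/2}$ off the resolvent through factors of $H_0^{-1/2}$. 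Hence the heart of the matter is a \emph{sharp limiting absorption estimate}, $\limsup_{\eps\to0^+}\|K_{\mu\pm i\eps}\|\le c$ for every $\mu\ge 0$, with exactly the constant of~\eqref{FKV}. To pass from such a boundary estimate to~\eqref{conj.goal} I would use that $z\mapsto\|K_z\|$ is subharmonic on $\Com\setminus[0,+\infty)$, being the supremum over unit vectors $\phi,\psi$ of the moduli of the analytic functions $z\mapsto(\phi,K_z\psi)$, together with the $|z|$-independent a priori bound $\|K_z\|\le c\,\sup_{t\ge0}|t/(t-z)|$, which follows from~\eqref{FKV} and is finite on every cone $\{\,\delta\le\arg z\le 2\pi-\delta\,\}$; a Phragm\'en--Lindel\"of argument in the slit plane (conformally a half-plane via $z\mapsto\sqrt{-z}$) then forces the supremum of $\|K_z\|$ to be attained in the limit towards $[0,+\infty)$.

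For the sharp boundary estimate itself the key structural input is that the hypothesis~\eqref{FKV} is scale invariant: it is preserved by $V\mapsto\lambda^2V(\lambda\,\cdot)$, which intertwines $K_z$ with $K_{\lambda^2 z}$, so it suffices to treat the single spectral value $\mu=1$ (uniformly over the class of admissible $V$); for $V$ homogeneous of degree $-2$ one may instead let $\mu\to 0^+$, where the estimate formally degenerates to $\|K_0\|\le c$, which is~\eqref{FKV}. When $V$ is radial I would decompose into angular-momentum channels, reducing to the half-line operators $-\partial_r^2+\big(\ell(\ell+d-2)+\tfrac{(d-1)(d-3)}{4}\big)r^{-2}+V(r)$, whose potentials still satisfy a one-dimensional form bound with the \emph{same} constant~$c$ relative to the corresponding inverse-square operators, and apply uniform one-dimensional resolvent estimates. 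The main obstacle, where I expect essentially all the difficulty to lie, is to make this work for general, non-radial $V$ in the class~\eqref{FKV}: a sharp uniform Sobolev/limiting-absorption inequality with \emph{precisely} the form-bound constant is required, whereas the quantitative limiting-absorption bounds available in $d\ge 4$ (\eg\ via Kenig--Ruiz--Sogge) typically lose constants and, in any case, presuppose $V\in L^{d/2}$ rather than mere form-subordination. Producing such a sharp inequality tailored to the class~\eqref{FKV} would seem to require a genuinely new ingredient beyond the tools used in this paper.
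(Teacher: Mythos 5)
Be aware that the statement you were asked to prove is stated in the paper as a \emph{Conjecture}: the authors give no proof, precisely because their $d=3$ argument does not extend. Your proposal does not close it either, and you say so yourself. The reduction you describe is correct and is exactly the paper's own strategy in dimension three (Theorems~\ref{Thm.stability.3D.Kato} and~\ref{Thm.stability.3D}): once one has $\sup_{z\in\Com\setminus[0,+\infty)}\|K_z\|\leq c<1$, assumption~\eqref{Ass.small1} holds, Theorem~\ref{Thm.main} gives $\sigma(H_V)=\sigma_{\mathrm{c}}(H_V)=[0,+\infty)$, and with $A=(\sgn V)B$, $B=(\sgn\bar V)A$ the same bound controls~\eqref{eq:5}, so Corollary~\ref{Cor3} yields similarity. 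Your diagnosis of the obstruction is also accurate: the $d=3$ proof rests on the pointwise domination~\eqref{peculiar} of the resolvent kernel, which fails for $d\geq 4$ (the kernel involves $K_{(d-2)/2}(\sqrt{-z}\,|x-y|)$, and for $z$ near the positive half-axis its modulus exceeds the $z=0$ kernel by factors growing like powers of $|z|\,|x-y|^2$), and your observation that \eqref{FKV}-type subordination alone gives the bound only for $z$ in the negative half-line (or, with a loss $c/|\sin\arg z|$, in cones away from $[0,+\infty)$) is correct.

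The genuine gap is that the entire analytic content of the conjecture --- the sharp, lossless estimate $\limsup_{\eps\to 0^+}\|K_{\mu\pm i\eps}\|\leq c$ uniformly in $\mu\geq 0$ under mere form-subordination --- is left unproven, and none of the auxiliary devices you list supplies it. The subharmonicity/Phragm\'en--Lindel\"of step only transfers a boundary bound into the interior; it presupposes the boundary bound. The scaling remark does not reduce the problem: the class~\eqref{FKV} is scale invariant, so fixing $\mu=1$ still requires the estimate uniformly over the whole class, which is the same problem. The angular-momentum decomposition applies only to radial $V$, while the hypothesis is purely a quadratic-form condition with no symmetry. Moreover, since the conjecture is asserted for every $c<1$, any argument losing a dimensional constant $C_d>1$ (as the Kenig--Ruiz--Sogge route does, besides requiring $V\in L^{d/2}$) cannot suffice; one needs an estimate with constant exactly $c$, or at least tending to $c$. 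So what you have written is a correct reduction and a well-informed explanation of why the problem is hard, but not a proof; the key limiting-absorption inequality remains exactly as open as in the paper.
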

\subsubsection{Low dimensions}
The spectral stability 
does not hold in low dimensions $d=1,2$,
because of the criticality of the Laplacian when $d<3$.
Indeed, it is well known 
(see, \eg, \cite[Thm.~XIII.11]{RS4})
that $H_V$~possesses at least one
(discrete) negative eigenvalue whenever $V \in C_0^\infty(\Real^d)$
is real-valued, non-positive and non-trivial and $d=1,2$.
In dimension $d=2$, however, the spectral stability can be achieved
by adding a magnetic field to~$H_0$, see~\cite{FKV2}.

In any case, the Birman--Schwinger principle 
can be used to obtain sharp estimates for the eigenvalues,
even when~$V$ is complex-valued. 
Here we focus on dimension $d=1$, 
where a simple formula
for the integral kernel of the resolvent of~$H_0$ is available.

\begin{Theorem}[{\cite[Thm.~4]{Abramov-Aslanyan-Davies_2001}} \&
{\cite[Corol.~2.16]{Davies-Nath_2002}}]%
\label{Thm.Davies}
Let $d = 1$ and $V \in L^{1}(\Real)$.
\begin{enumerate}
\item[\emph{(i)}]
$\sigma_\mathrm{r}(H_V) = \emptyset$.
\item[\emph{(ii)}]
$\sigma_\mathrm{e}(H_V) = [0,+\infty)$.
\item[\emph{(iii)}]
$
  \sigma_\mathrm{p}(H_V) \subset 
  \big\{\lambda \in \Com: |\lambda| \leq \frac{1}{4}\|V\|_{L^1(\Real)}^2\big\}
$.
\end{enumerate}
\end{Theorem}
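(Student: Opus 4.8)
The plan is to treat the three assertions separately, using the abstract Birman--Schwinger machinery of Sections~\ref{Sec.point}--\ref{Sec.ess} together with the explicit one-dimensional resolvent kernel $G_z(x,y)=e^{-\sqrt{-z}\,|x-y|}/(2\sqrt{-z})$ for $z\in\Com\setminus[0,+\infty)$ (branch with $\Re\sqrt{-z}>0$), which satisfies the uniform pointwise bound $|G_z(x,y)|\le 1/(2|z|^{1/2})$. First I would record that for $d=1$ every $V\in L^1(\Real)$ is admissible: condition~\eqref{Sobolev} holds with $V_1=V$, $V_2=0$ (no smallness being required when $d=1$), so Assumption~\ref{Ass.Ass} is satisfied, $H_V$ is a well-defined closed operator with non-empty resolvent set, and $\sigma_\mathrm{c}(H_0)=\sigma(H_0)=[0,+\infty)$.

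For~(iii) I would estimate the Birman--Schwinger operator. With $A=|V|^{1/2}$, $B=\sgn\overline V\,|V|^{1/2}$, the pointwise bound on $G_z$ and Cauchy--Schwarz (this is where $V\in L^1$ enters, via $\int|V|^{1/2}|\phi|\le\|V\|_{L^1(\Real)}^{1/2}\|\phi\|$) give $\|K_z\|\le\|V\|_{L^1(\Real)}/(2|z|^{1/2})$ for every $z\notin[0,+\infty)$. If $\lambda\in\sigma_\mathrm{p}(H_V)\setminus[0,+\infty)$, Corollary~\ref{Corol.evs}(i) yields $1\le\|K_\lambda\|$, hence $|\lambda|\le\tfrac14\|V\|_{L^1(\Real)}^2$; if $\lambda\in\sigma_\mathrm{p}(H_V)\cap[0,+\infty)=\sigma_\mathrm{p}(H_V)\cap\sigma_\mathrm{c}(H_0)$, Corollary~\ref{Corol.evs}(ii) gives $1\le\liminf_{\eps\to0^\pm}\|K_{\lambda+i\eps}\|\le\|V\|_{L^1(\Real)}/(2\lambda^{1/2})$ when $\lambda>0$ (and $\lambda=0$ lies in the claimed disc trivially). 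In all cases $|\lambda|\le\tfrac14\|V\|_{L^1(\Real)}^2$.

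For~(ii) the structural point to keep in mind is that hypothesis~\eqref{Ass.uniform} \emph{fails} here (the bound above blows up as $z\to0$), so Theorem~\ref{Thm.main0} is unavailable and one should not expect $\sigma(H_0)=\sigma(H_V)$ --- indeed (iii) leaves room for eigenvalues. Instead I would show directly that the resolvent difference is compact: by Proposition~\ref{Prop.2nd}, for $z\in\rho(H_0)\cap\rho(H_V)$ (e.g.\ $z=i\eta$ with $\eta$ large), $(H_V-z)^{-1}-(H_0-z)^{-1}=-[B(H_0-\bar z)^{-1}]^*A(H_V-z)^{-1}$; the first factor has integral kernel $\sgn(V(y))\,|V(y)|^{1/2}G_z(x,y)$ with $\int\!\!\int|V(y)|\,|G_z(x,y)|^2\,\der x\,\der y=\|V\|_{L^1(\Real)}/(4|z|\Re\sqrt{-z})<\infty$, hence is Hilbert--Schmidt, while $A(H_V-z)^{-1}=[|V|^{1/2}G_0^{-1/2}][G_0^{1/2}(H_V-z)^{-1}]$ is bounded (first factor by~\eqref{Ass.bounded}, second by the closed graph theorem using $\Dom(H_V)\subset\Dom(|H_0|^{1/2})$). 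Therefore the difference is compact and, by the stability of the Wolf essential spectrum under compact resolvent perturbations (\cf~\cite[Chap.~IX]{Edmunds-Evans}), $\sigma_\mathrm{e}(H_V)=\sigma_\mathrm{e}(H_0)=[0,+\infty)$.

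For~(i) I would exploit the reality of the Laplacian. With the present choice of $A,B$ one reads off from~\eqref{pseudo*} that $H_V^*$ is the pseudo-Friedrichs extension associated with the conjugate potential $\overline V$, and combining this with complex conjugation $\psi\mapsto\bar\psi$ (which fixes $H_0$ and is antilinear) should give $\sigma_\mathrm{p}(H_V^*)=\overline{\sigma_\mathrm{p}(H_V)}$. Concretely, if $H_V^*\psi=\bar\lambda\psi$ with $0\ne\psi\in\Dom(H_V^*)\subset H^1(\Real)$, writing out the representation formula~\eqref{form*} for $h_V^*$, taking complex conjugates, and replacing the test function $\phi$ by $\bar\phi$ yields $h_V(\phi,\bar\psi)=\lambda\,(\phi,\bar\psi)$ for all $\phi\in H^1(\Real)$; the argument from the proof of Theorem~\ref{Thm.evs2} then shows $\bar\psi\in\Dom(H_V)$ with $H_V\bar\psi=\lambda\bar\psi$, and the reverse inclusion follows by symmetry applied to $(H_V^*)^*=H_V$. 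Feeding $\sigma_\mathrm{p}(H_V^*)=\overline{\sigma_\mathrm{p}(H_V)}$ into the general identity~\eqref{residual} forces $\sigma_\mathrm{r}(H_V)=\emptyset$. I expect the bookkeeping in this last step --- tracking which form carries a complex conjugate and checking that the argument of Theorem~\ref{Thm.evs2} transfers verbatim --- to be the only genuinely delicate point; parts (ii) and (iii) are short once the explicit kernel $G_z$ is in hand.
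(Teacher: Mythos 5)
Your proposal is correct and follows essentially the same route as the paper: for (iii) the uniform bound $\|K_z\|\le\|V\|_{L^1(\Real)}/(2\sqrt{|z|})$ combined with Corollary~\ref{Corol.evs} (both parts, so embedded eigenvalues are covered); for (ii) compactness of the resolvent difference via Proposition~\ref{Prop.2nd} and Hilbert--Schmidt estimates, plus the Edmunds--Evans stability theorem for the Wolf essential spectrum; and for (i) the complex-conjugation symmetry of $H_V$ fed into \eqref{residual}, which you verify at the form level where the paper invokes $\mathcal{T}$-self-adjointness abstractly. The only cosmetic differences are that the paper bounds $\|K_z\|$ by its Hilbert--Schmidt norm and gets compactness through $T=|V|^{1/2}(H_0-z)^{-1/2}$ at a large negative $z$, justifying $z\in\rho(H_V)$ by m-sectoriality --- a small point you should also supply for your choice $z=i\eta$ (it follows, e.g., from Lemma~\ref{Lemma1} and the construction of $H_V$, since for $d=1$ and $V\in L^1(\Real)$ the bound \eqref{eq:2} holds with arbitrarily small $a$).
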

\begin{proof}
Property~(i) is a general fact for Schr\"odinger operators
because of the $\mathcal{T}$-self-adjointness property 
$H_V^*=\mathcal{T}H_V\mathcal{T}$, where $\mathcal{T}\psi:=\bar{\psi}$
is the complex conjugation (time-reversal operator in quantum mechanics).
Consequently, if~$\bar{\lambda}$ is an eigenvalue of~$H_V^*$,
then necessarily~$\lambda$ is an eigenvalue of~$H_V$,
so~(i) follows by the general criterion~\eqref{residual}.  

The other properties employ the fact that
the unperturbed resolvent $(H_0-z)^{-1}$
is an integral operator in~$\sii(\Real)$ with the kernel
$$
  G_z(x,y) := \frac{e^{-\sqrt{-z}\,|x-y|}}{2\,\sqrt{-z}}
  \,,
$$
where $z \in \Com\setminus[0,+\infty)$.
Consequently,  
$$
  \forall z \in \Com\setminus[0,+\infty), \
  x,y \in \Real \,, \qquad
  |G_z(x,y)| = \frac{e^{-\Re\sqrt{-z}\,|x-y|}}{2\,|\sqrt{-z}|}	
  \leq \frac{1}{2\, \sqrt{|z|}} 
$$

Property~(ii) follows because of the compactness of~$K_z$.
Under the hypotheses $V \in L^1(\Real)$, 
the operator~$H_V$ is m-sectorial
(\cf~Remark~\ref{Rem.Friedrichs}).  
Hence, there exists a negative~$z$ with sufficiently large~$|z|$
such that $z \in \rho(H_0)\cap\rho(H_V)$
and $(H_0-z)^{-1}$ is m-accretive.
Then
$$
  \|K_{z}\|_{\mathrm{HS}}^2
  = \int_{\Real} \int_{\Real} 
  |V(x)| \, |G_z(x,y)|^2 \,  |V(y)| 
  \, \der x \, \der y
  \leq \frac{\|V\|_{L^1(\Real)}^2}{4\,|z|}
  \,, 
$$
where $\|\cdot\|_{\mathrm{HS}}$ denotes the Hilbert--Schmidt norm,
so $K_{z}$~is compact. 
By Proposition~\ref{Prop.2nd},
$$
  (H_V-z)^{-1} - (H_0-z)^{-1}
  = -[\sgn \bar{V} \, |V|^{1/2}(H_0-z)^{-1}]^* |V|^{1/2} (H_V-z)^{-1}
  \,.
$$
Since $|V|^{1/2}(H_V-z)^{-1}$, $\sgn \bar{V}$ and $(H_0-z)^{-1/2}$  
are bounded operators, 
the difference of the resolvents
is compact if the operator $T := |V|^{1/2}(H_0-z)^{-1/2}$ is compact.
This is the case if, and only if, $TT^*$ is compact.
It remains to notice that $\|TT^*\|_\mathrm{HS}=\|K_z\|_\mathrm{HS}$ 
and recall the general stability theorem \cite[Thm.~IX.2.4]{Edmunds-Evans}.

Property~(iii) is the main part of the theorem. 
Similarly as above, we have
$$
  \|K_{z}\|^2
  \leq \|K_{z}\|_{\mathrm{HS}}^2
  \leq \frac{\|V\|_{L^1(\Real)}^2}{4 \, |z|}
$$
for every $z \in \Com\setminus[0,+\infty)$.
Consequently, $\|K_{z}\|>1$ if $|z| > \frac{1}{4}\|V\|_{L^1(\Real)}^2$.
This proves the desired inclusion (including the embedded eigenvalues)
by virtue of Corollary~\ref{Corol.evs}.
\end{proof}

The same machinery has been recently applied to possibly non-self-adjoint
biharmonic Schr\"odinger operators in~\cite{IKL}
and the wave operator with complex-valued dampings in~\cite{KK8}.
The Birman--Schwinger principle is not limited to continuous spaces;
see~\cite{Ibrogimov-Stampach_2019,Bogli-Stampach} 
for Schr\"odinger operators on lattices.

\subsection{Dirac operators in the three-dimensional Euclidean space}
Let $H_0 := - i \;\! \alpha \cdot \nabla  + m \, \alpha_4$
in $\Hilbert := \sii(\Real^3;\Com^4)$
with $\Dom(H_0) := H^1(\Real^3;\Com^4)$, 
where $m>0$ is a constant
and $\alpha := (\alpha_1,\alpha_2,\alpha_3)$
with~$\alpha_\mu$ being the usual $4 \times 4$ Hermitian Dirac matrices 
satisfying the anticommutation rules
$
  \alpha_\mu \alpha_\nu + \alpha_\nu \alpha_\mu
  = 2 \delta_{\mu\nu} I_{\Com^4} 
$
for $\mu,\nu\in\{1,\dots,4\}$ 
and the dot denotes the scalar product in~$\Real^3$.
One has $\sigma(H_0)=(-\infty,-m]\cup[+m,+\infty)$
and the spectrum is purely absolutely continuous.

Notice that $H_0^2 = (-\Delta+m^2) I_{\Com^4}$, 
where $-\Delta+m^2$ is the self-adjoint Schr\"odinger operator
in $\sii(\Real^3)$ with the usual domain $H^2(\Real^3)$.
The absolute value of~$H_0$ thus equals 
$|H_0| = \sqrt{-\Delta+m^2} I_{\Com^4}$,
which is again a self-adjoint operator 
when considered on the domain $H^1(\Real^3;\Com^4)$.
The form domain of $\sqrt{-\Delta+m^2}$ equals 
the fractional Sobolev space $H^{1/2}(\Real^3)$,
\cf~\cite[Sec.~7.11]{LL}.
Notice that $C_0^\infty(\Real^3)$ is dense in $H^{1/2}(\Real^3)$,
\cf~\cite[Sec.~7.14]{LL}. 

Given any $V \in L_\mathrm{loc}^1(\Real^3;\Com^{4 \times 4})$,
we use the matrix polar decomposition
$$
  V(x) = U(x) \, |V(x)| = U(x) \, |V(x)| \, |V(x)|^{1/2}
$$
for almost every $x \in \Real^3$.
Here~$U(x)$ is unitary
and $|V(x)|=\sqrt{V(x)^*V(x)}$ as before.
We set $A(x):=|V(x)|^{1/2}$ and $B(x) := |V(x)|^{1/2} U(x)^*$
as in the case of Schr\"odinger operators.
Now, however, we have $A(x) U(x)^* \not= U(x)^* A(x)$ in general,
which somewhat complicates the analysis.
We use the same symbols $A,B$ for the extended operators of 
matrix multiplication 
initially defined on $\mathcal{D}:=C_0^\infty(\Real^3;\Com^4)$.
Notice that $\mathcal{D}$ is dense in 
$H^{1/2}(\Real^3;\Com^4) = \Dom(|H_0|^{1/2})$.
 
To minimise conditions imposed on the matrix-valued potential~$V$, 
we follow~\cite{FK9} and
consider the matrix norm $v(x) := \|V(x)\|_{\Com^4\to\Com^4}$
for almost every $x \in \Real^3$.
The non-negative scalar function~$v$ belongs to $L_\mathrm{loc}^1(\Real^3)$.
Note that 
$ 
  v(x) = \| |V(x)| \|_{\Com^4\to\Com^4} 
  = \| |V(x)|^{1/2} \|_{\Com^4\to\Com^4}^2
$
We assume that there exist numbers $a \in (0,1)$ and $b\in\Real$ such that
\begin{equation}\label{Ass.Dirac}
  \forall f\in C_0^\infty(\Real^3)
  \,, \qquad
  \int_{\Real^3} v(x) |f(x)|^2 \, \der x
  \leq a \int_{\Real^3} |\sqrt[4]{-\Delta}\,f(x)|^2 \, \der x
  + b \int_{\Real^3} |f(x)|^2 \, \der x
  \,.
\end{equation}
Then Assumption~\ref{Ass.Ass} holds true. 
A sufficient condition to satisfy~\eqref{Ass.Dirac} is $v = v_1 + v_2$ 
with $v_1 \in L^3(\Real^3)$ and $v_2 \in L^\infty(\Real^3)$,
where
$
  \|v_1\|_{L^3(\Real^3)} < (2\pi^2)^{1/3}
$.
This can be shown  
with help of the H\"older inequality
and a quantified version of the Sobolev-type embedding 
$\dot{H}^{1/2}(\Real^3) \hookrightarrow L^3(\Real^3)$,
see~\cite[Prop.~1]{FK9}. 
Alternative sufficient conditions can be obtained 
by means of Kato's inequality
$
  \sqrt{-\Delta} \geq (2/\pi) |x|^{-1}
$,
see \cite[Rem.~3]{FK9}. 

In summary, $V$~falls within the class
of perturbations considered in Assumption~\ref{Ass.Ass}
and the pseudo-Friedrichs extension~$H_V$ is well defined. 
Contrary to Schr\"odinger operators, the Dirac operators
cannot be introduced via the Friedrichs extension
because of the unboundedness from below of the latter.

To apply the Birman--Schwinger principle to~$H_V$,
one customarily uses the identity 
$(H_0-z)^{-1} = (H_0+z) (H_0^2-z^2)^{-1}$
to get an explicit formula for the unperturbed resolvent.
More specifically, $(H_0-z)^{-1}$ is an integral operator in~$\Hilbert$ 
with the kernel obtained by applying~$H_0+z$ to 
the Green function~\eqref{Green} at energy $m^2-z^2$.
Estimating the norm of~$K_z$ by the Hilbert--Schmidt norm
and applying Corollary~\ref{Corol.evs},
one obtains various enclosures for the eigenvalues of~$H_V$.
This strategy was followed by Fanelli and one of the present authors in~\cite{FK9}.
As an example, we mention the following result.

\begin{Theorem}[{\cite[Thm.~2]{FK9}}]\label{Thm2}
Assume $v \in L^3(\Real^3) \cap L^{3/2}(\Real^3)$.
If  
\begin{equation}\label{hypothesis2}
  C_1 \, \|v\|_{L^3(\Real^3)} 
  + C_2 \, |\Re \lambda| \,
  \|v\|_{L^{3/2}(\Real^{3})} 
  < 1
  \,,
\end{equation}
where 
$$
  C_1 := \left(\frac{\pi}{2}\right)^{1/3} \sqrt{1+e^{-1}+2e^{-2}}
  \qquad \mbox{and} \qquad
  C_2 := \frac{2^{17/6}}{3\pi^{2/3}}
  \,,
$$
then $\lambda \not\in \sigma_\mathrm{p}(H_V)$.
\end{Theorem}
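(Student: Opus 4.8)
The strategy is the standard Birman--Schwinger argument: by the contrapositive of Corollary~\ref{Corol.evs}(i), if $\lambda \in \Com\setminus\sigma(H_0)$ is an eigenvalue of $H_V$ then $\|K_\lambda\| \geq 1$, so it suffices to show that the hypothesis~\eqref{hypothesis2} forces $\|K_\lambda\| < 1$ for every such $\lambda$. (Eigenvalues $\lambda \in \sigma(H_0)=(-\infty,-m]\cup[m,+\infty)$ are not excluded by the theorem, so we only need to treat $\lambda\notin\sigma(H_0)$.) The work is entirely in estimating the Hilbert--Schmidt norm of $K_\lambda$, which dominates the operator norm.

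First I would write down an explicit integral kernel for $(H_0-z)^{-1}$. Using $(H_0-z)^{-1} = (H_0+z)(H_0^2-z^2)^{-1}$ and $H_0^2 = (-\Delta+m^2)I_{\Com^4}$, the resolvent of $H_0$ is the matrix-valued integral operator obtained by applying the first-order differential operator $H_0+z = -i\alpha\cdot\nabla + m\alpha_4 + z$ to the scalar Green function $G_{z^2-m^2}(x,y)$ from~\eqref{Green}. Differentiating $e^{-\mu|x-y|}/(4\pi|x-y|)$ with $\mu=\sqrt{m^2-z^2}$ produces a kernel of the schematic form $\bigl(c_1(z) + c_2(z)|x-y|^{-1}\bigr)\,|x-y|^{-1}e^{-\Re\mu\,|x-y|}$ times bounded matrix factors, i.e.\ a singularity like $|x-y|^{-2}$ at the diagonal plus a milder $|x-y|^{-1}$ term, both cut off exponentially. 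Since $K_\lambda = [AG_0^{-1/2}][G_0(H_0-\lambda)^{-1}][BG_0^{-1/2}]^*$ is the bounded extension of $A(H_0-\lambda)^{-1}B^*$ with $A(x)=|V(x)|^{1/2}$, $B(x)=|V(x)|^{1/2}U(x)^*$, and since $\||V(x)|^{1/2}\|^2 = v(x)$, one bounds
\[
  \|K_\lambda\|_{\mathrm{HS}}^2 \leq \int_{\Real^3}\int_{\Real^3} v(x)\, \bigl|k_\lambda(x,y)\bigr|^2\, v(y)\,\der x\,\der y,
\]
where $|k_\lambda(x,y)|$ is the pointwise matrix-norm bound on the kernel just described. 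Splitting $|k_\lambda|^2$ into the $|x-y|^{-4}$ piece, the cross term $|x-y|^{-3}$, and the $|x-y|^{-2}$ piece (each with exponential factors which one simply bounds by $1$ to get a $z$-uniform estimate, or keeps to track $\Re\lambda$ dependence) reduces everything to weighted convolution integrals $\int\int v(x)|x-y|^{-\gamma}v(y)$. These are estimated by the Hardy--Littlewood--Sobolev inequality together with Hölder: for $\gamma=4$ one uses $v\in L^3(\Real^3)$ (giving the $\|v\|_{L^3}$ term with constant $C_1$), and for the lower-order pieces, which carry an explicit factor of $z$ (hence of $\Re\lambda$ after optimising in $\Im$), one uses $v\in L^{3/2}(\Real^3)$ (giving the $|\Re\lambda|\,\|v\|_{L^{3/2}}$ term with constant $C_2$); the exponential $e^{-\Re\sqrt{m^2-z^2}|x-y|}$ and the elementary inequalities $e^{-t}\le 1$, $te^{-t}\le e^{-1}$, $t^2e^{-2t}\le e^{-2}$ produce the numerical constants $\sqrt{1+e^{-1}+2e^{-2}}$ and $(\pi/2)^{1/3}$, $2^{17/6}/3\pi^{2/3}$. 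Collecting terms yields $\|K_\lambda\| \le \|K_\lambda\|_{\mathrm{HS}} \le C_1\|v\|_{L^3}+C_2|\Re\lambda|\,\|v\|_{L^{3/2}}$, so~\eqref{hypothesis2} gives $\|K_\lambda\|<1$.

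The main obstacle is purely computational bookkeeping rather than conceptual: one must carefully carry out the differentiation of the Green function, correctly isolate which terms scale with $z$ (equivalently with $\Re\lambda$ after optimising over the imaginary part, as in~\cite{FK9}) and which are $z$-uniform, and chase the sharp numerical constants $C_1,C_2$ through the Hardy--Littlewood--Sobolev and Hölder steps. Since all of this is already worked out in~\cite[Thm.~2]{FK9}, the cleanest route is to invoke that computation: the only genuinely new ingredient here is that the bound $\|K_\lambda\|\ge 1$ on the nonexistence region now follows from our abstract Corollary~\ref{Corol.evs} applied to the pseudo-Friedrichs extension $H_V$, rather than from an ad hoc Birman--Schwinger argument, and the details of why $H_V$ is the correct operator have been settled in Section~\ref{Sec.pseudo} and in the discussion preceding the theorem.
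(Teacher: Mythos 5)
Your overall strategy is the one the paper itself indicates (it does not reprove the result but cites \cite{FK9}): write the kernel of $(H_0-z)^{-1}$ by applying $H_0+z$ to the Green function \eqref{Green} at energy $m^2-z^2$, bound $\|K_z\|$ by its Hilbert--Schmidt norm, split the singularities $|x-y|^{-2}$ and $|x-y|^{-1}$, use H\"older/HLS-type estimates with the $L^3$ and $L^{3/2}$ norms of $v$, and then invoke the Birman--Schwinger bound of Corollary~\ref{Corol.evs}. Deferring the computational bookkeeping of $C_1,C_2$ to \cite{FK9} is acceptable here, since the paper does the same.

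There is, however, one genuine gap: your parenthetical claim that ``eigenvalues $\lambda\in\sigma(H_0)=(-\infty,-m]\cup[m,+\infty)$ are not excluded by the theorem, so we only need to treat $\lambda\notin\sigma(H_0)$'' misreads the statement. The hypothesis \eqref{hypothesis2} depends only on $|\Re\lambda|$ and can perfectly well hold for real $\lambda$ with $|\lambda|\geq m$, and the paper stresses immediately after the theorem that ``eigenvalues embedded in the essential spectrum $(-\infty,-m]\cup[+m,+\infty)$ are excluded as well.'' Your argument, which rests solely on Corollary~\ref{Corol.evs}(i), therefore proves only part of the assertion. To cover embedded eigenvalues you must use Corollary~\ref{Corol.evs}(ii): since the spectrum of $H_0$ is purely absolutely continuous, every point of $\sigma(H_0)$ lies in $\sigma_{\mathrm{c}}(H_0)$, so an eigenvalue $\lambda$ of $H_V$ there would force $\liminf_{\eps\to0^\pm}\|K_{\lambda+i\eps}\|\geq 1$. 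This is ruled out because the Hilbert--Schmidt bound you derive is of the form $C_1\|v\|_{L^3}+C_2|\Re z|\,\|v\|_{L^{3/2}}$, which for $z=\lambda+i\eps$ depends only on $\Re z=\lambda$ and is hence uniform as $\eps\to0$; under \eqref{hypothesis2} the liminf stays strictly below $1$, a contradiction. Adding this step (and checking that the kernel estimates are indeed uniform in $\Im z$ near the real axis, which they are since the exponential factors are only bounded by $1$) closes the gap.
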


Given a potential~$V$ with sufficiently small norm $\|v\|_{L^3(\Real^3)}$,
the hypothesis~\eqref{hypothesis2} excludes the existence of eigenvalues 
in thin tubular neighbourhoods of the imaginary axis,
with the thinness determined by the norm $\|v\|_{L^{3/2}(\Real^3)}$. 
Note that eigenvalues embedded in 
the essential spectrum $(-\infty,-m]\cup[+m,+\infty)$
are excluded as well. 
As an alternative result, \cite[Thm.~1]{FK9}~provides a quantitative enclosure
for more general potentials satisfying merely $v \in L^3(\Real^3)$.

For~$V$ being matrix-valued and possibly non-Hermitian,
a systematic study of the spectrum of the Dirac operator~$H_V$ was initiated
by the pioneering work of Cuenin, Laptev and Tretter 
\cite{Cuenin-Laptev-Tretter_2014} in the one-dimensional setting
and followed by \cite{Cuenin_2014,Enblom_2018,Cuenin-Siegl_2018}.
Some spectral aspects in the present three-dimensional 
situation are also covered by
\cite{Dubuisson_2014,Sambou_2016,Cuenin_2017,CFK,
D'Ancona-Fanelli-Schiavone}.
The same machinery has been recently applied to non-self-adjoint
Dirac operators on lattices \cite{CIKS}.

\subsection{Schr\"odinger operators in three-dimensional hyperbolic space}
In order to derive completely new results
with the help of the Birman--Schwinger principle,
we eventually consider Schr\"odinger operators
in hyperbolic spaces.
This class of operators does not seem to have been
considered previously in the non-self-adjoint context
except for the recent works \cite{Chen_2018,Hansmann_2019}.
However, the study of spectral properties of self-adjoint realisations
is enormous, see, \eg, 
\cite{Hislop_1994,Levin-Solomyak_1997,
Karp-Peyerimhoff_2000,Benguria-Linde_2007,
Borthwick-Crompton_2014,Berchio-Ganguly-Grillo-Pinchover_2019}
and references therein.
Here we restrict ourselves to the three-dimensional case
and refer to~\cite{Hansmann_2019} and~\cite{Chen_2018}
for the hyperbolic plane and higher dimensions, respectively.

Let $\Hyper^3$ be the three-dimensional hyperbolic space,
\ie~a complete, simply connected Riemannian manifold
with all sectional curvatures equal to~$-1$. 
There are three (isometric) standard realisations of~$\Hyper^3$
given by the half-space, ball and hyperboloid models
(\cf~\cite[Sec.~1]{Hislop_1994}), but we shall not need them.
We denote by~$H_0$ the self-adjoint Laplacian in $\Hilbert:=\sii(\Hyper^3)$,
introduced in a standard way
as the Friedrichs extension of the Laplace--Beltrami operator
initially defined on $\mathcal{D} := C_0^\infty(\Hyper^3)$.
More specifically, $H_0$~is the operator associated 
with the closed form $h_0[\psi] := \int_{\Hyper^3} |\nabla\psi|^2$
with
$
  \Dom(h_0) := H^1(\Hyper^3)
$
being the usual Sobolev space.
The absolute value~$|H_0|$ satisfies 
$\||H_0|^{1/2}\psi\|=\|\nabla\psi\|$
for every $\psi \in \Dom(|H_0|^{1/2})= H^1(\Hyper^3)$.
Note that $C_0^\infty(\Hyper^3)$ is  a core of $|H_0|^{1/2}$.
It is well known~\cite[Sec.~2]{Hislop_1994} that
\begin{equation}\label{spec0}
  \sigma(H_0) = [1,+\infty)
\end{equation}
and that the spectrum is purely absolutely continuous.
The shifted operator $H_0-1$ is \emph{subcritical},
meaning that it satisfies a Hardy-type inequality 
(see \cite{Akutagawa-Kumura_2013,Berchio-Ganguly-Grillo_2017}
for original proofs and \cite{Berchio-Ganguly-Grillo-Pinchover_2019}
for recent	 improvements)
\begin{equation}\label{Hardy.hyper}
  \int_{\mathbb{H}^3} |\nabla\psi|^2 - \int_{\mathbb{H}^3} |\psi|^2
  \geq \frac{1}{4} \int_{\mathbb{H}^3}
  \frac{|\psi(x)|^2}{\rho(x,x_0)^2} \, \der x
  \,,
\end{equation}
where $\rho(x,x_0)$ denotes the Riemannian distance between
the points $x,x_0 \in \mathbb{H}^3$ and~$x_0$ is fixed. 

Now let $V \in L_\mathrm{loc}^1(\Hyper^3)$ 
and make the same decomposition~\eqref{decomposition} 
as in the Euclidean case.
The operators~$A,B$ are defined analogously. 
We assume the subordination condition
\begin{equation}\label{Ass.hyper}
  \exists c < 1 
  \,, \qquad 
  \forall \psi \in H^1(\mathbb{H}^3)
  \,, \qquad  
  \int_{\mathbb{H}^3} |V| |\psi|^2 
  \leq c \left(
  \int_{\mathbb{H}^3} |\nabla\psi|^2 - \int_{\mathbb{H}^3} |\psi|^2
  \right)
  \,.
\end{equation}
Then Assumption~\ref{Ass.Ass} holds true 
and the pseudo-Friedrichs extension~$H_V$ is well defined.
It coincides with the usual m-sectorial Friedrichs extension in this case,
because~\eqref{Ass.hyper} ensures that~$V$ is relatively form-bounded 
with respect to~$H_0$ with the relative bound less than~$1$.  
In view of~\eqref{Hardy.hyper}, a sufficient condition 
to satisfy~\eqref{Ass.hyper} is given by the pointwise inequality
$
  |V(x)| \leq (c/4) \rho(x,x_0)^{-2}
$
for almost every $x \in \Hyper^3$.

We note that~\eqref{Ass.hyper} implies that the shifted operator 
$H_V-1$ is m-accretive and hence, in particular, 
the spectrum of~$H_V$ is contained in the complex half-plane 
$\{ \lambda : \Re\lambda \geq 1\}$. 
Actually, a much stronger statement is true.

\begin{Theorem}\label{Thm.hyper}
If~\eqref{Ass.hyper} holds, then
$$
  \sigma(H_V) = \sigma_\mathrm{c}(H_V) =[1,+\infty)
  \,.
$$
Moreover, $H_V$ and $H_0$ are similar to each other.
\end{Theorem}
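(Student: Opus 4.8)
The plan is to mimic, almost verbatim, the proof of the three–dimensional Euclidean result Theorem~\ref{Thm.stability.3D} (equivalently Theorem~\ref{Thm.stability.3D.Kato}), the only genuinely new input being an explicit pointwise bound for the resolvent kernel of the hyperbolic Laplacian. Concretely, I would reduce everything to verifying the uniform smallness condition \eqref{Ass.small1} together with the $H_0$–smoothness of $A$ and $B$; the two conclusions then follow immediately from Theorem~\ref{Thm.main} (combined with \eqref{spec0}) and from Corollary~\ref{Cor3}. For the first step one uses that on $\Hyper^3$ the operator $(H_0-z)^{-1}$, for $z\in\rho(H_0)=\Com\setminus[1,\infty)$, is an integral operator with the elementary kernel $G_z(x,y)=\frac{1}{4\pi}\,\frac{e^{-\sqrt{1-z}\,\rho(x,y)}}{\sinh\rho(x,y)}$, where $\rho(x,y)$ is the Riemannian distance and the branch of the square root has positive real part (this follows, e.g., by subordination from the explicit hyperbolic heat kernel). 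Since $z\notin[1,\infty)$ gives $\Re\sqrt{1-z}\geq 0$, one obtains the pointwise domination $|G_z(x,y)|\leq G(x,y):=\frac{1}{4\pi\sinh\rho(x,y)}$, and $G$ is exactly the positive, finite Green function of the \emph{subcritical} operator $H_0-1$ — this is the hyperbolic analogue of \eqref{peculiar}, subcriticality being precisely what the Hardy inequality \eqref{Hardy.hyper} records.

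Next, following the proof of Theorem~\ref{Thm.stability.3D}, for $\phi,\psi\in C_0^\infty(\Hyper^3)$ I would estimate, using Remark~\ref{rem1} and the pointwise domination, $|(\phi,K_z\psi)|\leq \int_{\Hyper^3}\!\int_{\Hyper^3}(|V|^{1/2}|\phi|)(x)\,G(x,y)\,(|V|^{1/2}|\psi|)(y)\,\der x\,\der y$, and then, letting $z=1-\eps^2\to 1^-$ and using monotone/dominated convergence, identify the right-hand side with $\big((H_0-1)^{-1/2}|V|^{1/2}|\phi|,(H_0-1)^{-1/2}|V|^{1/2}|\psi|\big)$. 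Observing that \eqref{Ass.hyper} is equivalent to $\||V|^{1/2}(H_0-1)^{-1/2}g\|^2\leq c\|g\|^2$ on the dense subspace $\Ran((H_0-1)^{1/2})$ (dense because $1\notin\sigma_\mathrm{p}(H_0)$), so that $|V|^{1/2}(H_0-1)^{-1/2}$ and its adjoint extend to bounded operators of norm $\leq\sqrt c$, one concludes $|(\phi,K_z\psi)|\leq c\|\phi\|\,\|\psi\|$, hence, by density of $C_0^\infty(\Hyper^3)$ in $\sii(\Hyper^3)$, that $\|K_z\|\leq c<1$ uniformly in $z\in\rho(H_0)$. Thus \eqref{Ass.small1} holds, and Theorem~\ref{Thm.main}(i) together with \eqref{spec0} gives $\sigma(H_V)=\sigma(H_0)=[1,\infty)$; since the spectrum of the self-adjoint operator $H_0$ is purely continuous, part~(ii) of the same theorem upgrades this to $\sigma(H_V)=\sigma_\mathrm{c}(H_V)=[1,\infty)$.

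For the similarity, the very same kernel estimate — applied with $|V|^{1/2}$ in both factors, and then conjugated by the bounded multiplier $\sgn V$ — bounds the supremum in \eqref{eq:5}, for $A$ as well as for $B$, by $2c$; hence $A$ and $B$ are smooth relative to $H_0$. Since $\Dom(A)=\Dom(B)=\Dom(|H_0|^{1/2})$ by construction, and $A=UB$, $B=U^*A$ with $U$ the partial isometry of multiplication by $\sgn V$ (whose initial set is $\overline{\Ran}(B)$), Corollary~\ref{Cor3} (see the Remark following it) applies and yields that $H_V$ and $H_0$ are similar, exactly as in the Euclidean cases.

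I expect the only real obstacle to be the first step: one has to pin down the explicit resolvent kernel on $\Hyper^3$ and verify the uniform pointwise domination $|G_z|\leq G$ over all of $\Com\setminus[1,\infty)$ — elementary in three dimensions, but this is the one place where the special structure of $\Hyper^3$, and the subcriticality of $H_0-1$ provided by \eqref{Hardy.hyper}, is genuinely used. The limiting argument $z\to 1$ and the membership $|V|^{1/2}|\phi|\in\Ran((H_0-1)^{1/2})$ for $\phi\in C_0^\infty(\Hyper^3)$ are routine and are dealt with precisely as in the proof of Theorem~\ref{Thm.stability.3D}.
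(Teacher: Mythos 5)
Your proposal is correct and follows essentially the same route as the paper: reformulate \eqref{Ass.hyper} as the bound $\||V|^{1/2}(H_0-1)^{-1/2}\|\leq\sqrt c$, dominate $|G_z(x,y)|$ by the Green function $G_1(x,y)=\frac{1}{4\pi\sinh\rho(x,y)}$ of the subcritical operator $H_0-1$, pass to the limit $z\to1$ to get $\|K_z\|\leq c$ uniformly, and then invoke Theorem~\ref{Thm.main} together with the smoothness bound on \eqref{eq:5} and Corollary~\ref{Cor3}. The paper's proof proceeds in exactly this way (including obtaining the kernel by integrating the hyperbolic heat kernel over time), so no further comparison is needed.
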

\begin{proof}
The proof is similar to the proof of Theorem~\ref{Thm.stability.3D}.
We start with an equivalent formulation of~\eqref{Ass.hyper}. 
Writing $g := (H_0-1)^{1/2} \psi$ in~\eqref{Ass.hyper}, we have
\begin{equation*} 
  \big\||V|^{1/2}(H_0-1)^{-1/2} g\big\|^2 
  \leq c \left(
  \big\|\nabla (H_0-1)^{-1/2} g\big\|^2  
  - \big\|(H_0-1)^{-1/2} g\big\|^2 
  \right)
  = c \, \|g\|^2
  \,.
\end{equation*}
Since $1 \in \sigma_\mathrm{c}(H_0)$ 
(in fact, the spectrum of~$H_0$ is purely continuous),
the range of~$(H_0-1)^{1/2}$ is dense in $\sii(\Hyper^3)$
and we see that~\eqref{eq:2} is equivalent to
\begin{equation}\label{b1} 
  \big\||V|^{1/2}(H_0-1)^{-1/2}\big\|^2 
  \leq c  
  \,.
\end{equation}
It follows (by taking the adjoint) that also
\begin{equation}\label{b2}
  \big\|(H_0-1)^{-1/2}|V|^{1/2}\big\|^2 
  \leq c 
  \,.
\end{equation} 

The main ingredient of the proof is the explicit form
of the integral kernel~$G_z(x,y)$ of the unperturbed 
resolvent $(H_0-z)^{-1}$ which is given by
\begin{equation}\label{Green.hyper}
  G_z(x,y) := \frac{e^{-\sqrt{-(z-1)}\,\rho(x,y)}}{4\pi \sinh\rho(x,y)} 
  \,,
\end{equation}
where $z \in \Com\setminus(1,+\infty)$
and $x,y \in \Hyper^3$ with $x \not= y$.
To get~\eqref{Green.hyper}, one may integrate the formula 
for the heat kernel~\cite[p.~179]{Davies_1989}
over positive times.
As in the Euclidean case (\cf~\eqref{peculiar}),
one has the uniform pointwise bound
\begin{equation*}
  \forall z \not\in (1,+\infty)
  \,, \quad 
  \forall x,y \in \Hyper^3, \ x\not=y
  \,, \qquad 
  |G_z(x,y)| \leq G_1(x,y)
  \,.
\end{equation*}
Consequently, 
for every $\phi,\psi \in C_0^\infty(\Hyper^3)$, one has
%
\begin{align*}
  |(\phi,K_z\psi)|
  &\leq 
  \int_{\Hyper^3}\int_{\Hyper^3} 
  (|V|^{1/2}|\phi|)(x) \, |G_z(x,y)| \, (|V|^{1/2}|\psi|)(y) 
  \, \der x \, \der y
  \\
  &\leq  
  \int_{\Hyper^3}\int_{\Hyper^3} 
  (|V|^{1/2}|\phi|)(x) \, G_1(x,y) \, (|V|^{1/2}|\psi|)(y) 
  \, \der x \, \der y
  \\
  &= \lim_{\eps \to 0^+}
  \int_{\Hyper^3}\int_{\Hyper^3} 
  (|V|^{1/2}|\phi|)(x) \, G_{1-\eps^2}(x,y) \, (|V|^{1/2}|\psi|)(y) 
  \, \der x \, \der y
  \\
  &= \lim_{\eps \to 0^+}
  \big(|V|^{1/2}|\phi|,(H_0-1+\eps^2)^{-1}|V|^{1/2}|\psi|\big)
  \\
  &= \lim_{\eps \to 0^+}
  \big((H_0-1+\eps^2)^{-1/2}|V|^{1/2}|\phi|,(H_0-1+\eps^2)^{-1/2}|V|^{1/2}|\psi|\big)
  \\
  &=  
  \big((H_0-1)^{-1/2}|V|^{1/2}|\phi|,(H_0-1)^{-1/2}|V|^{1/2}|\psi|\big)
  \\
  &\leq 
  \big\|(H_0-1)^{-1/2}|V|^{1/2}\big\|^2 \|\phi\| \|\psi\|
  \\
  &\leq c \, \|\phi\| \|\psi\|
  \,.
\end{align*}
%
Here the limits are justified with help of the dominated convergence theorem
and the last inequality follows by~\eqref{b2}.  
Since $C_0^\infty(\Hyper^3)$ is dense in $\sii(\Hyper^3)$,
we get $\|K_z\| \leq c$ uniformly in $z \in \Com\setminus[1,+\infty)$,
so~\eqref{Ass.small1} holds true.
Furthermore, the same estimates as above ensure 
that the supremum in~\eqref{eq:5} 
is bounded from above by the constant~$2c$.
Hence, the sufficient conditions of the abstract 
Theorem~\ref{Thm.main} and Corollary \ref{Cor3} are satisfied.
\end{proof}  

\appendix
\section{Kato's and pseudo-Friedrichs extensions coincide}  
Suppose that $H_0, A, B$ satisfy Assumption \ref{Ass.Ass} and that in addition $A,B$ are closed and smooth relative to $H_0$. Moreover, suppose that 
$$ \Dom(A)= \Dom(B)= \Dom(|H_0|^{1/2}).$$
Let $H_V$ denote the pseudo-Friedrichs extension constructed in Section \ref{Sec.pseudo} and let $\tilde{H}_V$ denote the closed extension of $H_0+B^*A$ provided by Kato's Theorem \ref{Thm.main.similar}.  

\begin{Proposition}\label{Prop.Appendix} 
Given the above assumptions we have $H_V=\tilde{H}_V$. 
\end{Proposition}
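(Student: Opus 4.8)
The plan is to prove that $H_V$ and $\tilde H_V$ have identical resolvents, by showing that both satisfy the same generalised second resolvent identity and that this identity, in the present situation, determines the resolvent uniquely.

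First I would collect the relevant facts. Since $B$ is closed, Remark~\ref{rem1} gives $K(z)=\overline{A(H_0-z)^{-1}B^*}$ for every $z\in\rho(H_0)=\Com\setminus\Real$, so $\|A(H_0-z)^{-1}B^*\|=\|K(z)\|$; as $\tilde H_V$ is the extension furnished by Kato's Theorem~\ref{Thm.main.similar}, the hypothesis~\eqref{eq:4} is in force and therefore coincides with~\eqref{Ass.small1}, whence $\|K(z)\|\le c<1$ and $I+K(z)\in\mathscr{B}(\Hilbert')$ is boundedly invertible for all $z\in\Com\setminus\Real$. Moreover, Theorem~\ref{Thm.main}(i) yields $\sigma(H_V)=\sigma(H_0)\subset\Real$, while Kato's theorem gives that $\tilde H_V$ is similar to $H_0$, hence $\sigma(\tilde H_V)=\sigma(H_0)\subset\Real$; thus every $z\in\Com\setminus\Real$ lies in $\rho(H_0)\cap\rho(H_V)\cap\rho(\tilde H_V)$. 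Noting that $[B(H_0-\bar z)^{-1}]^*=\overline{(H_0-z)^{-1}B^*}$ (because $B$ is closed with $\Dom(|H_0|^{1/2})\subset\Dom(B)$ and~\eqref{Ass.bounded} holds), Proposition~\ref{Prop.2nd} and Kato's identity~\eqref{2nd} both read, for $H\in\{H_V,\tilde H_V\}$,
\begin{equation}\label{App.res}
  (H-z)^{-1}=(H_0-z)^{-1}-[B(H_0-\bar z)^{-1}]^*\,A(H-z)^{-1},\qquad z\in\Com\setminus\Real.
\end{equation}
I would also record that $\Ran([B(H_0-\bar z)^{-1}]^*)\subset\Dom(|H_0|^{1/2})$, which follows from the factorisation $[B(H_0-\bar z)^{-1}]^*=\overline{(H_0-z)^{-1}G_0^{1/2}}\,[BG_0^{-1/2}]^*$ used in Section~\ref{Sec.pseudo} together with the spectral theorem applied to $H_0$. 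In particular, since the right-hand side of~\eqref{App.res} then manifestly takes values in $\Dom(H_0)+\Ran([B(H_0-\bar z)^{-1}]^*)\subset\Dom(|H_0|^{1/2})=\Dom(A)$, we get $\Dom(\tilde H_V)\subset\Dom(|H_0|^{1/2})$ (the analogous inclusion for $H_V$ being part of Theorem~\ref{Thm.pseudo}).

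Next I would apply $A$ to~\eqref{App.res}. This is legitimate because, by the previous paragraph, every term on the right-hand side lies in $\Dom(A)$, and $\Ran((H-z)^{-1})=\Dom(H)\subset\Dom(A)$ as well. The crucial identity is $A\,[B(H_0-\bar z)^{-1}]^*=K(z)$: this operator is closed (composition of the closed operator $A$ with a bounded operator whose range lies in $\Dom(A)$) and everywhere defined, hence bounded by the closed graph theorem, and on the dense subspace $\Dom(B^*)$ it agrees with $A(H_0-z)^{-1}B^*=K(z)$ by Remark~\ref{rem1}. Writing moreover $A(H-z)^{-1}=[AG_0^{-1/2}][G_0^{1/2}(H-z)^{-1}]\in\mathscr{B}(\Hilbert,\Hilbert')$ (the second factor being bounded by the closed graph theorem, \cf~the proof of Theorem~\ref{Thm.main0}), applying $A$ to~\eqref{App.res} gives
\begin{equation*}
  (I+K(z))\,A(H-z)^{-1}=A(H_0-z)^{-1},\qquad H\in\{H_V,\tilde H_V\}.
\end{equation*}
Since $I+K(z)$ is invertible, $A(H_V-z)^{-1}=(I+K(z))^{-1}A(H_0-z)^{-1}=A(\tilde H_V-z)^{-1}$, and substituting this back into~\eqref{App.res} yields $(H_V-z)^{-1}=(\tilde H_V-z)^{-1}$ for every $z\in\Com\setminus\Real$. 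Therefore $H_V=\tilde H_V$.

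The main obstacle is the bookkeeping around the identity $A\,[B(H_0-\bar z)^{-1}]^*=K(z)$ — in particular checking that $[B(H_0-\bar z)^{-1}]^*$ maps $\Hilbert$ into $\Dom(A)=\Dom(|H_0|^{1/2})$, so that the composition is even defined; both are routine once $[B(H_0-\bar z)^{-1}]^*$ is rewritten in the factorised form used throughout Section~\ref{Sec.pseudo} and one invokes the spectral theorem for $H_0$. A secondary, harmless point is to make sure that the appeal to Theorem~\ref{Thm.main}(i) and to Kato's similarity statement genuinely places all of $\Com\setminus\Real$ in the three resolvent sets, which is what licenses the use of~\eqref{App.res} for every non-real $z$.
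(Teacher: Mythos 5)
Your proof is correct, but it takes a genuinely different route from the paper. The paper verifies that Kato's operator $\tilde{H}_V$ satisfies the defining properties of the pseudo-Friedrichs extension --- the domain inclusions $\Dom(\tilde{H}_V),\Dom(\tilde{H}_V^*)\subset\Dom(|H_0|^{1/2})$ (from $\tilde H_V$-smoothness of $A$ and $\tilde H_V^*$-smoothness of $B$ in Kato's Theorem~\ref{Thm.main.similar}) and the representation formulae \eqref{form}, \eqref{form*}, checked by a form computation based on \eqref{2nd} --- and then invokes the uniqueness clause of Theorem~\ref{Thm.pseudo}. You instead prove equality of resolvents directly: both operators satisfy the same second resolvent identity (Proposition~\ref{Prop.2nd} and \eqref{2nd}, after identifying $\overline{(H_0-z)^{-1}B^*}=[B(H_0-\bar z)^{-1}]^*$ exactly as the paper does), and applying $A$ and using $A[B(H_0-\bar z)^{-1}]^*=K(z)$ together with the invertibility of $I+K(z)$ (from \eqref{eq:4}, i.e.\ \eqref{Ass.small1}, which is legitimately available since $\tilde H_V$ is furnished by Kato's theorem) pins down $A(H-z)^{-1}=(I+K(z))^{-1}A(H_0-z)^{-1}$ and hence the resolvent itself, for every non-real $z$. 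Your key verifications --- that $\Ran([B(H_0-\bar z)^{-1}]^*)\subset\Dom(|H_0|^{1/2})=\Dom(A)$ via the factorisation $G_0^{1/2}(H_0-z)^{-1}[BG_0^{-1/2}]^*$, and that $A[B(H_0-\bar z)^{-1}]^*$ is bounded and agrees with $K(z)$ on $\Dom(B^*)$ --- are sound. What each approach buys: the paper's argument needs no smallness beyond what makes Kato's theorem applicable and no reference to Theorem~\ref{Thm.main}, and it exhibits $\tilde H_V$ as the pseudo-Friedrichs extension in the structural sense; yours avoids the form computations entirely and is shorter, at the cost of leaning on the quantitative bound $\|K(z)\|\le c<1$ and on spectral information ($\Com\setminus\Real\subset\rho(H_V)\cap\rho(\tilde H_V)$) --- though note you could get $\Com\setminus\Real\subset\rho(H_V)$ more cheaply from the construction in Section~\ref{Sec.pseudo} (formula \eqref{eq:6} applies at any $z$ with $-1\notin\sigma(K(z))$) rather than citing Theorem~\ref{Thm.main}(i), and your derivation of $\Dom(\tilde H_V)\subset\Dom(|H_0|^{1/2})$ from \eqref{2nd} tacitly uses that $A(\tilde H_V-z)^{-1}$ is well defined, which is indeed part of Kato's Theorem~\ref{Thm.main.similar} (via $\tilde H_V$-smoothness of $A$), so no gap arises.
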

\begin{proof} 
By \cite[Theorem 1.5]{Kato_1966}, $A$ is smooth relative to $\tilde{H}_V$ and $B$ is smooth relative to $\tilde{H}_V^*$, hence $\Dom(\tilde{H}_V) \subset \Dom(A) = \Dom(|H_0|^{1/2})$
and $\Dom(\tilde{H}_V^*) \subset \Dom(B) = \Dom(|H_0|^{1/2})$,
which establishes two of the uniqueness requirements of Theorem~\ref{Thm.pseudo}.
It remains to verify~\eqref{form} and~\eqref{form*}.  
Let $\phi \in \Dom(|H_0|^{1/2})$ and $\psi \in \Dom(\tilde{H}_V)$.
Given $\xi \in \Com\setminus\Real$, 
let $g \in \Hilbert$ be the unique vector satisfying
$\psi = (\tilde{H}_V-\xi)^{-1}g$. 
Then, using~\eqref{2nd},
$$
\begin{aligned}
  h_0(\phi,\psi)
  &= \big(G_0^{1/2}\phi,H_0 G_0^{-1} G_0^{1/2}\psi\big) 
  = \big(G_0^{1/2}\phi,H_0 G_0^{-1/2} (\tilde{H}_V-\xi)^{-1}g\big) 
  \\
  &= \big(G_0^{1/2}\phi,H_0 G_0^{-1/2} (H_0-\xi)^{-1}g\big) 
  - \big(G_0^{1/2}\phi,
  H_0 G_0^{-1/2} \, \overline{(H_0-\xi)^{-1}B^*} A (\tilde{H}_V-\xi)^{-1}g\big) 
  \\
  &= (\phi,g) + \xi \big(\phi,(H_0-\xi)^{-1}g\big)
  - \big(G_0^{1/2}\phi,
  H_0 G_0^{-1/2} \, \overline{(H_0-\xi)^{-1}B^*} A \psi\big) 
  \,.
\end{aligned}
$$
If $\phi \in \Dom(H_0)$, then
$$
\begin{aligned}
  \big(G_0^{1/2}\phi,H_0 G_0^{-1/2} \, \overline{(H_0-\xi)^{-1}B^*} A \psi\big)
  &= \big(H_0\phi,\overline{(H_0-\xi)^{-1}B^*} A \psi\big)
  \\
  &= \big([(H_0-\xi)^{-1}B^*]^*H_0\phi, A \psi\big)
  \\
  &= \big(B(H_0-\bar{\xi})^{-1}H_0\phi, A \psi\big)
  \\
  &=
  (B\phi,A\psi) + \xi \big(B(H_0-\bar{\xi})^{-1}\phi,A\psi\big)
  \\
  &=
  v(\phi,\psi) + \xi \big(\phi,\overline{(H_0-\xi)^{-1}B^*} A \psi\big)
  \,,
\end{aligned}
$$
where we used that 
$
  \overline{(H_0-\xi)^{-1}B^*} = [(H_0-\xi)^{-1}B^*]^{**} = [B(H_0-\overline{\xi})^{-1}]^*
$
which follows from the fact that $B^*$ is densely defined (since $B$ is closed) and $[(H_0-\xi)^{-1}B^*]^* = B(H_0-\overline{\xi})^{-1} \in \mathscr{B}(\Hilbert,\Hilbert')$ is densely defined as well. The obtained identity extends to all $\phi \in \Dom(|H_0|^{1/2})$,
since $\Dom(H_0)$ is a core of $\Dom(|H_0|^{1/2})$.
Therefore, using~\eqref{2nd} again,
$$
\begin{aligned}
  h_V(\phi,\psi) &= h_0(\phi,\psi) + v(\phi,\psi)
  \\
  &= (\phi,g) + \xi \big(\phi,(H_0-\xi)^{-1}g\big)
  -\xi \big(\phi,\overline{(H_0-\xi)^{-1}B^*}A\psi\big)
  \\
  &= (\phi,g) + \xi \big(\phi,(\tilde{H}_V-\xi)^{-1}g\big) \\
  &= (\phi, (\tilde{H}_V-\xi)\psi) + \xi (\phi, \psi) = (\phi, \tilde{H}_V \psi)
\end{aligned}
$$
for every $\phi \in \Dom(|H_0|^{1/2})$ 
and $\psi \in \Dom(\tilde{H}_V)$.
This establishes~\eqref{form}. The validity of~\eqref{form*} can be proved in the same manner.
The uniqueness of the pseudo-Friedrichs extension 
ensures that necessarily $\tilde{H}_V=H_V$ as desired. 
\end{proof}

\subsection*{Acknowledgments}
We are grateful to Yehuda Pinchover for useful discussions.
M.H. was supported by the Deutsche Forschungsgemeinschaft (DFG, German Research Foundation) - Project number HA 7732/2-2. 
The work of D.K. was partially supported 
by the GA\v{C}R (Czech Science Foundation)
grants No.~18-08835S and 20-17749X.
 
%
\bibliography{bib07}
\bibliographystyle{amsplain}
\end{document}